\theoremstyle{plain}
\newtheorem{theorem}{Theorem}[section]
\newtheorem{corollary}[theorem]{Corollary}
\newtheorem{lemma}[theorem]{Lemma}
\newtheorem{proposition}[theorem]{Proposition}
\theoremstyle{definition}
\newtheorem{example}[theorem]{Example}
\newtheorem{remark}[theorem]{Remark}
\def\!#1{!#1}
\def\ker#1{\mathrm{ker}(#1)}
\def\aut#1{\mathrm{Aut}(#1)}
\def\Adj{\mathrm{Adj}}
\def\aff#1{\mathrm{Aff}#1}
\def\Aff#1{\mathrm{Aff}#1}
\def\lmlt{\mathrm{LMlt}}
\def\dis{\mathrm{Dis}}
\def\rad{\mathrm{rad}}
\def\sym{\mathrm{Sym}}
\def\Sym{\mathrm{Sym}}
\newcommand{\core}{\operatorname{\mathsf{Core}}}
\newcommand{\Conj}{\operatorname{\mathsf{Conj}}}
\def\comment#1{{\color{red} #1}}
\def\setof#1#2{\{#1\, : \,#2\}}
\def\Z{\mathbb Z}
\def\ldiv{\backslash}
\def\cg#1{\equiv_\alpha}
\newcommand*\xbar[1]{%
   \hbox{%
     \vbox{%
       \hrule height 0.5pt 
       \kern0.5ex
       \hbox{%
         \kern-0.1em
         \ensuremath{#1}%
         \kern-0.1em
       }%
     }%
   }%
} 
\title{On simply connected quandles}
\author{M. Bonatto}
\address[M. Bonatto]{}
\email{marco.bonatto.87@gmail.com}
\begin{document}

\begin{abstract}
    In this paper we provide an alternative characterization of finite simply connected quandles involving only cocycles with values in abelian groups of prime size. As a corollary of such a characterization and the classification of connected quandles of size $p^2$ and $p^3$ we obtain a classification of simply connected quandles of size $p^2$ (already obtained with a different method in \cite{VV}) and $p^3$ for $p>3$ using a method that works for quandles of size $p^n$ for arbitrary $n$. We also classify the simply connected quandles within two subclasses of finite involutory quandles: nilpotent latin quandles and core quandles.
\end{abstract}
\maketitle

\section*{Introduction}

Quandles are algebraic structures introduced in \cite{J} and \cite{Matveev} in order to provide algebraic invariants of knots. Quandles have also been studied in connection with Hopf algebras and in the framework of the Yang-Baxter equation \cite{AG,ESS, EGS}. They have also been investigated from a purely algebraic viewpoint by several authors, using techniques from group theory, module theory and universal algebra. The study of quandle cohomology has been started to offer new topological invariants for knots and links too \cite{CEGS}. Nonetheless, it can also be used in order to construct quandle extensions with special properties, known under the names of {\it quandle covers} (or {\it covering maps}) for quandles. Quandle covers have been investigated in the seminal paper \cite{Eisermann} in analogy with coverings of topological spaces by using a categorical approach. They have been further studied in \cite{covering_paper} from a universal algebraic viewpoint. Quandles admitting only trivial covers are said to be {\it simply connected} and they have been defined in \cite{Eisermann} (again in analogy with simply connected spaces). A categorical characterization of simply connected quandles have also been obtained in the same paper and it involves the so-called {\it enveloping group} of the quandle that plays the same role of the fundamental group of a topological space. A slightly different characterization was also given in \cite{covering_paper} and it shows in particular that simply connected quandles are {\it principal}, i.e. they can be obtained using a construction over groups.  


In this paper we focus on quandle covers and simply connected quandles by using universal algebraic and group theoretical techniques. For instance we prove that for finite connected quandles the notions of {\it trivial cover} and {\it trivial cocycle} coincide (see Proposition \ref{trivial for connected} and Corollary \ref{trivial for connected finite}). We were also able to answer to the open question asked in \cite[Remark 9.7]{MeAndPetr} for connected quandles accordingly (see Proposition \ref{simply then all groups}). We also provide an alternatively characterization of finite simply connected quandles in Theorem \ref{H2 abelian}, proving that we can study just quandle cocycles with values in abelian groups of prime size. 

We specialize Theorem \ref{H2 abelian} to quandles of prime power size in Theorem \ref{simply pn} and we employ the theorem to classify simply connected quandles of size $p^2$ and $p^3$ for $p>3$ (the former was already obtained in \cite{VV}). This result relies on the classification of connected quandles of size $p^2$ and $p^3$ obtained in \cite{Hou} and \cite{GB} and on the classification of groups of size $p^4$ and their automorphisms given in \cite{tedesco}. We actually describe a (purely group theoretical) algorithm that can tell if a quandle of size $p^n$ is simply connected or not, provided the description of all groups of size $p^{n+1}$ and their automorphisms. The algorithm that allows to construct groups of size $p^n$ and their automorphisms provided the groups of size $p^{n-1}$ is a classical result and it is described in details in \cite{tedesco}. Combining both algorithms it is possible to push the classification of simply connected quandles of size $p^n$ for arbitrary $n$. Moreover, principal nilpotent quandles (in the sense of \cite{comm} and \cite{CP}) are direct products of quandles of prime power order and in the finite case one of such quandles is simply connected if and only if its $p$-components are simply connected (see Theorem \ref{simply nilpotent iff}). So the algorithm can be actually used to determine wether a finite nilpotent quandle is simply connected or not.

For latin quandles we can also use that the property of being simply connected is stable under homomorphic images (see Proposition \ref{simply factor}). Indeed, given a quandle, if it has a factor that is not simply connected, then it is not simply connected. In this way we can shorten the list of quandles to be processed by the algorithm.

Concerning quandle cocycles, we also define a special family of constant quandle cocycles named {\it split cocycles} by using quandle homomorphisms, providing a concrete way to build cocycles and also cocycle invariants consequently (see Lemmas \ref{twisted} and \ref{cocycle by rho core}).

Using the classification of simply connected quandles of size $p^2$ and split cocycles we obtain also the classification simply connected quandles within two subclasses of finite involutory quandles: nilpotent latin quandles (Theorem \ref{involutory simply}) and core quandles (Corollary \ref{simply cores}).

The paper is organized as follows: in Section \ref{preliminary} we include all the basic results on left quasigroups and quandles we need in the paper, in particular towards the Cayley kernel and quandle covers. In Section \ref{cohomology} we talk about constant quandle cocycle and we characterize cocycles cohomologous to the trivial cocycles in terms of congruences. We also explain how abelian cocycles can be used to build covers of quandles under some additional assumptions. In Section \ref{simply} we focus on simply connected quandles. We characterize finite simply connected quandles in general and the nilpotent ones in particular. Moreover, we obtain the classification of simply connected quandles of size $p^2$ and $p^3$ for every prime $p>3$. In the last section we turn our attention to simply connected involutory quandles.  Appendix \ref{appendix} collects some group theoretical results we need for the classification results.

\textbf{Notation:} let $G$ be a group and $g\in G$. We denote by $\widehat{g}$ the inner automorphism of $G$ with respect to $g$. The derived subgroup of $G$ will be denoted by $\gamma_1(G)$ and the Frattini subgroup of $G$ by $\Phi(G)$. If $f\in \aut{G}$ and $N$ is a normal subgroup such that $f(N)=N$, then $f$ induces an automorphism on $G/N$ defined as $f_N(xN)=f(x)N$ for every $x\in G$.

Let $G$ be acting on a set $X$. We denote by $G_x$ the stabilizer of $x\in X$ and by $x^G$ the orbit of $x\in X$ under the action of $G$.

\section{Preliminary results}   \label{preliminary}
\subsection{Left quasigroups and quandles}

A {\it left quasigroup} is a set $Q$ endowed with a pair of binary operations $\{*,\backslash\}$ such that the identities
\begin{align}\label{LQG}
    x*(x\backslash y)\approx y\approx x\backslash (x*y)
\end{align}
hold. We usually denote $*$ just by juxtaposition. We can define the maps
$$L_x:y\mapsto x*y,\quad R_x:y\mapsto y*x$$
for every $x\in Q$. Note that, according to \eqref{LQG}, $L_x$ is a permutation of $Q$ with inverse $L_x^{-1}:y\mapsto x\backslash y$. So we can define the {\it left multiplication group} of $Q$ as $\lmlt(Q)=\langle L_x,\, x\in Q\rangle$.

A left quasigroup is said to be:
\begin{itemize}
    \item[(i)] {\it connected}, if $\lmlt(Q)$ is transitive over $Q$;
    \item[(ii)] {\it latin} if $R_x$ is bijective for every $x\in Q$;
    \item[(iii)] {\it involutory} if $x*(x*y)=y$ for every $x,y\in Q$. 
\end{itemize}
Latin left quasigroups are basically {\it quasigroups} for which we do not consider right division as a basic operation (defined as $x/y=R_y^{-1}(x)$ for every $x,y\in Q$). If the underlying set is infinite considering the right division as a basic operation or not might effect congruences and subalgebras. Note that subalgebras, homomorphic images and direct products (with respect to the set of basic operations $\{*,\backslash\}$) of finite latin left quasigroups are latin. See \cite{Q1, Q2} for further details on quasigroups

A {\it rack} is a left quasigroup $(Q,*,\backslash)$ such that the identity
\begin{align*}
    x*(y*z)\approx(x*y)*(x*z)
\end{align*}
holds. Idempontent racks (i.e. racks such that $x*x\approx x$ holds) are called {\it quandles}. 

\begin{example}\label{examples}\text{ }
\begin{itemize}
    \item[(i)]     Projection left quasigroups, i.e. left quasigroups for which $x*y=y$ for every $x,y\in Q$ are quandles. We call them {\it projection quandles}. The only connected projection quandle has size $1$.
\item[(ii)] Let $G$ be a group and $H\subseteq G$ a set closed under conjugation. Then $H$ together with the operation $x*y=xyx^{-1}$ is a quandle, denoted by $\Conj(H)$. Note that $\Conj$ is a functor from the category of groups to the category of quandles.

\item[(iii)] Let $G$ be a group, $f\in \aut{G}$ and $H\leq Fix(f)=\setof{g\in G}{f(g)=g}$ we can define the {\it coset quandle} over $G/H$ with operation $*$ defined by setting
$$xH*yH=xf(x^{-1}y)H$$
for every $x,y\in G$. We denote such quandle as $Q=\mathcal{Q}(G,H,f)$. If $H=1$ we say that $Q$ is {\it principal} and we denote it just by $\mathcal{Q}(G,f)$. If in addition $G$ is abelian we denote $Q=\mathcal{Q}(G,f)$ by $\aff(G,f)$ and we say that $Q$ is {\it affine} over $G$.

\item[(iv)] Let $G$ be a group and $f\in \aut{G}$. Then $G$ with the operation $x*y=xf(yx^{-1})$ is a quandle, called {\it twisted conjugation quandle} and denoted by $\Conj_f(G)$. Note that if $G$ is abelian such quandles are affine and that $\Conj_1(G)=\Conj(G)$.

\item[(v)] Let $G$ be a group. Then $G$ with the operation $x*y=xy^{-1}x$ is an involutory quandle, called {\it core} quandle and denoted by $\core(G)$. Note that $\core$ is a functor from the category of groups to the category of involutory quandles \cite{Bardakov}.
    \end{itemize}

\end{example}

A congruence of a left quasigroup $Q$ is an equivalence relation compatible with the operations $\{*,\backslash\}$. Namely, a congruence is an equivalence relation $\alpha$ such that $$(x*y)\, \alpha\, (z*t)\, \text{ and }\, (x\backslash y)\, \alpha\, (z\backslash t)$$ provided $x\,\alpha\, z$ and $y\, \alpha\, t$. We denote the lattice of congruences of $Q$ as $Con(Q)$ with top element $1_Q=Q\times Q$ and bottom element $0_Q=\setof{(x,x)}{x\in Q}$. Congruences and surjective homomorphisms of left quasigroups are essentially the same thing. Given $\alpha\in Con(Q)$ we can define a left quasigroup structure on the set $Q/\alpha$ with operations $[x]*[y]=[x*y]$ and $[x]\backslash [y]=[x\backslash y]$ for every $x,y\in Q$. The canonical map $x\mapsto [x]$ is a left quasigroup homomorphism. On the other hand, given a surjective homomorphism $f:Q\longrightarrow Q'$ the relation $\ker{f}=\setof{(x,y)\in Q\times Q}{f(x)=f(y)}$ is a congruence and $Q'\cong Q/\ker{f}$. 

Given $\alpha\in Con(Q)$, the map 
\begin{equation}
    \pi_\alpha:\lmlt(Q)\longrightarrow \lmlt(Q/\alpha),\quad L_x\mapsto L_{[x]}
\end{equation}
can be extended to a well defined surjective group homomorphism with kernel denoted by $\lmlt^\alpha$ (see \cite{AG}). Moreover we have that
\begin{align}\label{pi(h)}
[h(x)]_\alpha=\pi_\alpha(h)([x]_\alpha)    
\end{align}
for every $x\in Q$ and $h\in \lmlt(Q)$.

For racks the interplay between congruences and normal subgroups of the left multiplication group is pretty strong and it has been investigated in \cite{CP} and \cite{semimedial}. Indeed, if $\alpha$ is a congruence of a rack $Q$ we can define the {\it displacement group relative to $\alpha$} as 
\begin{align}\label{dis_alpha}
    \dis_\alpha=\langle L_x L_y^{-1},\, x\,\alpha\, y\rangle.
\end{align}
Such subgroups are normal subgroups of $\lmlt(Q)$ \cite[Section 3.1]{CP}. In particular, we denote $\dis_{1_Q}$ just as $\dis(Q)$ and we call it {\it the displacement group} of $Q$. We can also define $\dis^\alpha=\lmlt^\alpha\cap \dis(Q)$ and $\dis(Q)_{[x]}=\setof{h\in \dis(Q)}{h(x)\,\alpha\, x}$ for every $x\in Q$. Then, for $x\in Q$ we have that
$$\dis(Q)_x\leq\dis(Q)_{[x]_\alpha}\quad \text{ and }\quad \dis_\alpha\leq \dis^\alpha=\bigcap_{x\in Q}\dis(Q)_{[x]_\alpha} .$$
The coset quandle construction can be used to represent connected quandles over their displacement groups.

\begin{proposition}\cite{J}\label{coset reps}
    Let $Q$ be a connected quandle and $x\in Q$. Then $$Q\cong \mathcal{Q}(\dis(Q),\dis(Q)_x,\widehat{L}_x).$$
\end{proposition}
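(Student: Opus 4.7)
The plan is to realise $Q$ as the orbit of $x$ under the action of $\dis(Q)$, identify the stabiliser with $\dis(Q)_x$, and verify that conjugation by $L_x$ induces the correct quandle operation on the resulting coset space.

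First I would verify that $\dis(Q)$ acts transitively on $Q$. Since $Q$ is connected, $\lmlt(Q)$ is transitive, and for any $y\in Q$ the factorisation $L_y=(L_y L_x^{-1})L_x$ with $L_y L_x^{-1}\in\dis(Q)$ yields $\lmlt(Q)=\dis(Q)\langle L_x\rangle$. Because $L_x(x)=x$, the cyclic factor $\langle L_x\rangle$ lies in the stabiliser of $x$, so the $\dis(Q)$-orbit of $x$ coincides with its $\lmlt(Q)$-orbit, which is all of $Q$.

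Next I would check that the triple $(\dis(Q),\dis(Q)_x,\widehat{L}_x)$ is admissible input for the coset quandle construction. Normality of $\dis(Q)$ in $\lmlt(Q)$ makes $\widehat{L}_x$ well defined as an automorphism of $\dis(Q)$. The delicate requirement is $\dis(Q)_x\leq\operatorname{Fix}(\widehat{L}_x)$: every element of $\lmlt(Q)$ is a quandle automorphism, being a product of left translations and their inverses, so for $h\in\dis(Q)_x$ and $y\in Q$ one has $h(x*y)=h(x)*h(y)=x*h(y)$, that is, $hL_x=L_xh$.

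Finally I would define $\phi\colon\dis(Q)/\dis(Q)_x\to Q$ by $\phi(h\dis(Q)_x)=h(x)$, which is a well defined bijection by the orbit--stabiliser correspondence. Computing
\[
\phi\bigl(g\dis(Q)_x*h\dis(Q)_x\bigr)=\phi\bigl(g\widehat{L}_x(g^{-1}h)\dis(Q)_x\bigr)=\bigl(gL_xg^{-1}hL_x^{-1}\bigr)(x)=\bigl(gL_xg^{-1}h\bigr)(x),
\]
(using $L_x^{-1}(x)=x$) and comparing with $\phi(g\dis(Q)_x)*\phi(h\dis(Q)_x)=g(x)*h(x)=L_{g(x)}(h(x))$, the equality reduces to the conjugation identity $gL_xg^{-1}=L_{g(x)}$, which for $g\in\lmlt(Q)$ follows by induction on word length from the rack case $L_zL_xL_z^{-1}=L_{L_z(x)}$. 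I expect the only mildly subtle step to be the pointwise fixing of $\dis(Q)_x$ under $\widehat{L}_x$, where one must remember that elements of $\lmlt(Q)$ are quandle automorphisms rather than mere set bijections; the rest is bookkeeping inside Joyce's coset representation.
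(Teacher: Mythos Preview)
Your argument is correct and follows the standard route: transitivity of $\dis(Q)$ on $Q$ via the factorisation $\lmlt(Q)=\dis(Q)\langle L_x\rangle$ together with $L_x(x)=x$, verification that $\dis(Q)_x\leq\operatorname{Fix}(\widehat{L}_x)$ using that stabiliser elements are quandle automorphisms fixing $x$, and the homomorphism check via $gL_xg^{-1}=L_{g(x)}$. The paper does not supply its own proof of this proposition but simply cites Joyce \cite{J}; your write-up is exactly the classical argument behind that citation, so there is nothing further to compare.
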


We can also provide a coset representation of factors as follows. The second statement was not included in the original paper but can be verified by a direct computation.
\begin{proposition}\label{coset reps quotient}\cite[Lemma 1.4]{GB}
    Let $Q$ be a connected quandle, $x\in Q$ and $\alpha\in Con(Q)$. Then $$Q/\alpha\cong \mathcal{Q}(\dis(Q)/\dis^\alpha,\dis(Q)_{[x]}/\dis^\alpha,(\widehat{L}_x)_{\dis^\alpha})$$
    and the map 
    \begin{align*}
    \pi_\alpha:\mathcal{Q}(\dis(Q),\dis(Q)_x,\widehat{L_x})&\longrightarrow  \mathcal{Q}(\dis(Q)/\dis^\alpha,\dis(Q)_{[x]}/\dis^\alpha,(\widehat{L}_x)_{\dis^\alpha})  \\ 
    h\dis(Q)_x& \mapsto \pi_\alpha(h)\dis(Q)_{[x]}/\dis^\alpha\end{align*}

    is a morphism of quandle.
\end{proposition}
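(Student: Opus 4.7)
The plan is to derive both statements from Proposition \ref{coset reps} applied to $Q/\alpha$, together with the properties of the group homomorphism $\pi_\alpha$ recorded in the preliminaries. First I would note that $Q/\alpha$ is connected: the canonical quandle projection $Q\to Q/\alpha$ is surjective, so $\pi_\alpha$ carries the transitive action of $\lmlt(Q)$ on $Q$ to a transitive action of $\lmlt(Q/\alpha)$ on $Q/\alpha$. Proposition \ref{coset reps} then gives
\[
Q/\alpha\;\cong\;\mathcal{Q}\bigl(\dis(Q/\alpha),\,\dis(Q/\alpha)_{[x]_\alpha},\,\widehat{L}_{[x]_\alpha}\bigr),
\]
and the proof of the first claim reduces to exhibiting a group isomorphism between this coset datum and the one in the statement that sends point stabilizer to point stabilizer and intertwines the two inner automorphisms.

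For this I would analyse the restriction of $\pi_\alpha$ to $\dis(Q)=\langle L_yL_z^{-1}:y,z\in Q\rangle$. It surjects onto $\dis(Q/\alpha)$ because it sends generators to generators, and its kernel is $\dis(Q)\cap\lmlt^\alpha=\dis^\alpha$; so $\dis(Q)/\dis^\alpha\cong\dis(Q/\alpha)$. Using \eqref{pi(h)}, an element $h\in\dis(Q)$ lies in $\dis(Q)_{[x]}$ if and only if $\pi_\alpha(h)$ fixes $[x]_\alpha$, and surjectivity then forces $\pi_\alpha(\dis(Q)_{[x]})=\dis(Q/\alpha)_{[x]_\alpha}$. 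Finally, $\dis^\alpha$ is normal in $\lmlt(Q)$ as the intersection of two normal subgroups, so conjugation by $L_x$ descends to $(\widehat{L}_x)_{\dis^\alpha}$ on $\dis(Q)/\dis^\alpha$, which under the previous group isomorphism corresponds to conjugation by $\pi_\alpha(L_x)=L_{[x]_\alpha}$, i.e.\ to $\widehat{L}_{[x]_\alpha}$. Assembling these pieces yields the first isomorphism.

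For the second statement, the asserted map is the composition of the representation isomorphism of Proposition \ref{coset reps}, the canonical projection $Q\to Q/\alpha$, and the inverse of the representation constructed in the previous paragraph. Well-definedness on cosets is immediate from $\dis(Q)_x\leq\dis(Q)_{[x]}$, and the homomorphism property reduces to $\pi_\alpha$ being a group homomorphism that intertwines $\widehat{L}_x$ with $(\widehat{L}_x)_{\dis^\alpha}$: expanding the coset-quandle operation $hH*h'H=h\,\widehat{L}_x(h^{-1}h')H$ and then applying $\pi_\alpha$ produces precisely the operation in the target coset quandle. The main technical point throughout is the lifting assertion $\dis(Q/\alpha)_{[x]_\alpha}\subseteq \pi_\alpha(\dis(Q)_{[x]})$, which I would flag explicitly, but it follows directly from surjectivity of $\pi_\alpha|_{\dis(Q)}$ combined with \eqref{pi(h)}; once this is in place the remaining verifications are routine group-theoretic bookkeeping.
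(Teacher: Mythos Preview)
Your proof is correct and follows the natural approach: apply Proposition~\ref{coset reps} to $Q/\alpha$ and then transport the resulting coset datum through the group isomorphism $\dis(Q)/\dis^\alpha\cong\dis(Q/\alpha)$ induced by $\pi_\alpha|_{\dis(Q)}$, checking that stabilizers and the inner automorphisms correspond; the second statement then falls out as the composition you describe (or equivalently by the direct coset computation you sketch). The paper itself does not spell out a proof---it cites \cite[Lemma~1.4]{GB} for the first assertion and remarks that the second ``can be verified by a direct computation''---so your argument is precisely the kind of verification the paper is pointing to.
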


The subgroups defined in \eqref{dis_alpha} are revelant in order to describe the properties of congruences, in particular towards the notions of {\tt abelianness} and {\it centrality} of congruences and the related concepts of {\it solvability} and {\it nilpotence} of algebras that have been developed in \cite{comm} in order to generalize the usual definitions for groups. In \cite{CP} we adapted to racks and quandles the commutator theory in the sense of \cite{comm}. The abelianness and centrality of congruences are completely determined by the properties of the correspondent subgroups defined in \eqref{dis_alpha}. Indeed a congruence $\alpha$ of a quandle $Q$ is {\it abelian} (resp. {\it central}) if $\dis_\alpha$ is abelian (resp. central in $\dis(Q)$) and $(\dis_\alpha)_x=(\dis_\alpha)_y$ (resp. $\dis(Q)_x=\dis(Q)_y$) provided $x\,\alpha\, y$ \cite[Theorem 1.1]{CP}. 

A quandle is {\it solvable} (resp. {\it nilpotent}) of length $n$ (we also say that $Q$ is $n$-step solvable/nilpotent) if there exists a chain of congruences
$$0_Q= \alpha_0\leq \alpha_1\leq\ldots \leq\alpha_{n-1}\leq \alpha_n=1_Q$$
such that $\alpha_{i+1}/\alpha_i$ is abelian (resp. central) in $Q/\alpha_i$.  It turned out that solvable (resp. nilpotent) quandles are exactly the quandles with solvable (resp. nilpotent) displacement group (see \cite[Theorem 1.2]{CP}). Solvable quandles of length $1$ are called {\it abelian} and we have that a quandle $Q$ is abelian if and only if $\dis(Q)$ is abelian and semiregular \cite[Theorem 2.2]{abelian_quandles}. In particular, using Proposition \ref{coset reps} we have that a connected abelian quandle is affine over its displacement group \cite{Medial}.

On the other hand, if $N$ is a normal subgroup of $\lmlt(Q)$ then
\begin{align*}
    \mathcal{O}_N=\setof{(x,h(x))\in Q^2}{x\in Q,\, h\in N},
\end{align*}
is a congruences of $Q$ \cite[Lemma 2.6]{CP}. In particular, $Q/\mathcal{O}_{\lmlt(Q)}$ is a projection quandle and the orbits with respect to $\lmlt(Q)$ are subalgebras (indeed if $y=h(x)$ for some $h\in\lmlt(Q)$ then $L_y^{\pm 1}(x)=L_{h(x)}^{\pm 1}(x)=h L_x^{-1} h^{-1}(x)\in x^{\lmlt(Q)}$).

As for groups we can identify the smallest congruence with abelian factor (see \cite{comm}). Let us denote by $\gamma_Q$ such a congruence for a quandle $Q$. Namely we have that $Q/\alpha$ is an abelian quandle if and only if $\gamma_Q\leq \alpha$. Moreover, the property of being connected is determined by the factor with respect to $\gamma_Q$.

\begin{proposition}\label{connected by gamma}
    Let $Q$ be a quandle. The following are equivalent:
    \begin{enumerate}
        \item[(i)] $Q$ is connected.
        \item[(ii)] $Q/\gamma_Q$ is connected.
    \end{enumerate}
\end{proposition}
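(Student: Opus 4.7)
The plan is to prove the two implications separately; the forward direction is formal, while the reverse one requires a small observation about the orbit congruence $\mathcal{O}_{\lmlt(Q)}$.

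For (i)$\Rightarrow$(ii), I would appeal directly to \eqref{pi(h)}: since $\pi_{\gamma_Q}\colon\lmlt(Q)\to \lmlt(Q/\gamma_Q)$ is a surjective group homomorphism and $[h(x)]_{\gamma_Q}=\pi_{\gamma_Q}(h)([x]_{\gamma_Q})$ for every $x\in Q$ and $h\in\lmlt(Q)$, transitivity of $\lmlt(Q)$ on $Q$ pushes forward to transitivity of $\lmlt(Q/\gamma_Q)$ on $Q/\gamma_Q$.

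For (ii)$\Rightarrow$(i), the decisive step is to prove the inclusion $\gamma_Q\leq \mathcal{O}_{\lmlt(Q)}$. As recalled in the preliminaries, the quotient $Q/\mathcal{O}_{\lmlt(Q)}$ is a projection quandle, so every left translation there is the identity and hence $\dis(Q/\mathcal{O}_{\lmlt(Q)})=1$. The trivial group is vacuously abelian and semiregular, so by the characterization of abelian quandles cited just above (namely that $Q$ is abelian iff $\dis(Q)$ is abelian and semiregular), $Q/\mathcal{O}_{\lmlt(Q)}$ is abelian. The minimality of $\gamma_Q$ among congruences with abelian quotient then forces $\gamma_Q\leq\mathcal{O}_{\lmlt(Q)}$.

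Once this inclusion is in hand, the conclusion is immediate: given $x,y\in Q$, connectedness of $Q/\gamma_Q$ provides some $h\in\lmlt(Q)$ with $[h(x)]_{\gamma_Q}=[y]_{\gamma_Q}$; equivalently $(h(x),y)\in\gamma_Q\subseteq\mathcal{O}_{\lmlt(Q)}$, so $y\in (h(x))^{\lmlt(Q)}=x^{\lmlt(Q)}$, and $\lmlt(Q)$ is transitive on $Q$. The only nontrivial point in the whole argument is the inclusion $\gamma_Q\leq\mathcal{O}_{\lmlt(Q)}$; beyond this observation I do not foresee any real obstacle.
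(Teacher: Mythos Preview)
Your proof is correct and follows essentially the same route as the paper: both directions hinge on the inclusion $\gamma_Q\leq\mathcal{O}_{\lmlt(Q)}$, obtained by observing that $Q/\mathcal{O}_{\lmlt(Q)}$ is a projection quandle and hence abelian. The only cosmetic difference is in the final step of (ii)$\Rightarrow$(i): the paper concludes by noting that $Q/\mathcal{O}_{\lmlt(Q)}$ is then a connected projection quandle, hence of size~$1$, whereas you unwind the same fact by a direct element chase.
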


\begin{proof}
    (i) $\Rightarrow$ (ii) Homomorphic images of connected quandles are connected.

    (ii) $\Rightarrow$ (i) Let $\alpha=\mathcal{O}_{\lmlt(Q)}$. Then $Q/\alpha$ is a projection quandle, so $\dis(Q/\alpha)=1$ and in particular it is abelian and semiregular. Therefore $Q/\alpha$ is abelian and so $\gamma_Q\leq \alpha$. Hence we have an onto quandle morphism $Q/\gamma_Q\longrightarrow Q/\alpha$. Then $Q/\alpha$ is connected and projection. So, $|Q/\alpha|=1$ (see Example \ref{examples}). Thus $\alpha=1_Q$, i.e. $Q$ is connected.
\end{proof}

For a finite quandle $Q$, the congruence $\gamma_Q$ can be realized by using the orbits of the derived subgroup of $\dis(Q)$. 

\begin{proposition}[\cite{GB}, Proposition 1.6] \label{gamma for quandles}
    Let $Q$ be a finite connected quandle and $G=\dis(Q)$. Then $\gamma_Q=\mathcal{O}_{\gamma_1(G)}$ and $\dis^{\gamma_Q}=\gamma_1(G)$.
\end{proposition}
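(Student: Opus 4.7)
The plan is to reduce the proposition to the single group-theoretic fact that, for a finite connected quandle $Q$ with $G = \dis(Q)$, every point stabiliser $G_x$ is contained in $\gamma_1(G)$. Once this is in place, both equalities follow by elementary manipulations with $\mathcal{O}_N$-type congruences and orbit stabilisers.

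First I would handle the inclusion $\mathcal{O}_{\gamma_1(G)} \leq \gamma_Q$. Since $Q/\gamma_Q$ is abelian, its displacement group $G/\dis^{\gamma_Q}$ is abelian, so $\gamma_1(G) \leq \dis^{\gamma_Q}$; hence $\gamma_1(G)$ preserves every $\gamma_Q$-class and its orbits refine $\gamma_Q$. For the reverse inclusion set $\alpha = \mathcal{O}_{\gamma_1(G)}$. The classes of $\alpha$ are the $\gamma_1(G)$-orbits, so $G_{[y]_\alpha} = \gamma_1(G) G_y$ for every $y \in Q$. By connectedness the stabilisers $G_y$ are conjugate in $G$, and in the abelian group $G/\gamma_1(G)$ conjugation is trivial, so the subgroups $\gamma_1(G) G_y$ are independent of $y$; in particular $\dis^\alpha = \bigcap_y G_{[y]_\alpha} = \gamma_1(G) G_x$. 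It follows that $G/\dis^\alpha$ acts regularly on $Q/\alpha$ and is abelian as a quotient of $G/\gamma_1(G)$. By the criterion for abelianness recalled in the text, $Q/\alpha$ is then abelian, so $\gamma_Q \leq \alpha$. At this stage we have $\gamma_Q = \mathcal{O}_{\gamma_1(G)}$ and $\dis^{\gamma_Q} = \gamma_1(G) G_x$.

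The decisive step, and the main obstacle, is $G_x \leq \gamma_1(G)$, which collapses the last identity to $\dis^{\gamma_Q} = \gamma_1(G)$. Idempotency gives $L_x(x) = x$, so $\lmlt(Q)/G$ is cyclic, and the formula $L_{h(x)}L_x^{-1} = [h, L_x]$ for $h \in G$ combined with connectedness produces $G = [\lmlt(Q), \lmlt(Q)]$. Now pass to $\lmlt(Q)/\gamma_1(G)$: the abelian normal subgroup $\bar G = G/\gamma_1(G)$ has cyclic quotient generated by the image of $L_x$, acting on $\bar G$ via $\widehat{L_x}$. A direct commutator computation in this metabelian extension yields $[\lmlt(Q), \lmlt(Q)]/\gamma_1(G) = (\widehat{L_x} - 1)(\bar G)$, and combining this with $[\lmlt(Q), \lmlt(Q)] = G$ gives $(\widehat{L_x} - 1)(\bar G) = \bar G$. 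Thus the endomorphism $\widehat{L_x} - 1$ of the finite abelian group $\bar G$ is surjective, hence bijective, so the only element of $\bar G$ fixed by $\widehat{L_x}$ is the identity. Because $G_x$ is pointwise fixed by $\widehat{L_x}$ (the very hypothesis used in Proposition \ref{coset reps}), its image in $\bar G$ is trivial, i.e.\ $G_x \leq \gamma_1(G)$, and the proposition follows. The finiteness hypothesis is used exclusively to promote surjectivity of $\widehat{L_x} - 1$ to injectivity in this last step.
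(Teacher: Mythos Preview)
Your argument is correct. The paper does not prove this proposition; it merely cites it as \cite[Proposition 1.6]{GB}, so there is no in-paper proof to compare against.

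Your proof is self-contained and the key step---showing $G_x\leq\gamma_1(G)$ by proving that $\widehat{L_x}-1$ is surjective (hence bijective) on the finite abelian group $G/\gamma_1(G)$---is clean. Two small remarks on the write-up: the fact that $\lmlt(Q)/G$ is cyclic has nothing to do with idempotency (it follows immediately from the definition of $\dis(Q)$, since all $L_y$ have the same image modulo $\dis(Q)$); and the identity $G=[\lmlt(Q),\lmlt(Q)]$ is where connectedness really enters, via $L_yL_x^{-1}=[h,L_x]$ with $h\in\dis(Q)$ chosen so that $h(x)=y$. Finally, the claim that $G_x$ is pointwise fixed by $\widehat{L_x}$ is indeed standard (any $g\in G_x$ is an automorphism of $Q$ fixing $x$, so $gL_x=L_{g(x)}g=L_xg$), but it is worth saying explicitly rather than pointing to Proposition~\ref{coset reps}.
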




The orbit decomposition with respect to the action of the left multiplication group is a congruence also for left quasigroups (see \cite[Corollary 1.9]{semimedial}). We conclude this section by looking at how connected factors and orbits with respect to the left multiplication group of left quasigroups are related.
\begin{lemma}\label{block onto}
Let $Q$ be a left quasigroup and $\alpha,\beta$ be congruences of $Q$. If $\alpha\circ \beta=1_Q$ then the map 
$$[x]_\alpha\longrightarrow Q/\beta,\quad y\mapsto [y]_\beta$$ 
is onto for every $x\in Q$.
\end{lemma}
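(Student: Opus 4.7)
The plan is to extract the statement directly from the definition of the composition of binary relations. Recall that $\alpha\circ\beta$ consists of all pairs $(x,z)\in Q\times Q$ such that there exists $y\in Q$ with $x\,\alpha\, y$ and $y\,\beta\, z$. The assumption $\alpha\circ\beta=1_Q$ therefore says that for every pair $(x,z)\in Q\times Q$, some intermediate $y$ of this form exists. This is essentially the content we need.

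Concretely, I would fix $x\in Q$ and pick an arbitrary target $[z]_\beta\in Q/\beta$, with $z\in Q$ a representative. The aim is to produce $y\in[x]_\alpha$ with $[y]_\beta=[z]_\beta$. Applying the hypothesis to the pair $(x,z)$ yields $y\in Q$ with $x\,\alpha\, y\,\beta\, z$; this $y$ lies in $[x]_\alpha$ and satisfies $[y]_\beta=[z]_\beta$ by construction, so the map $[x]_\alpha\to Q/\beta$ sending $y\mapsto[y]_\beta$ hits every class.

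There is no real obstacle here: the lemma is a one-line consequence of unpacking the definition of relational composition, and the left quasigroup hypothesis plays no active role beyond ensuring that the quotients $Q/\alpha$ and $Q/\beta$ and the associated block and coset maps are well defined in the first place. In fact, the same statement would hold for any set equipped with two equivalence relations whose composition is the total relation. The only thing worth being careful about is distinguishing $\alpha\circ\beta$ (composition as relations) from the join $\alpha\vee\beta$ in the congruence lattice; the hypothesis is the stronger one, and it is precisely what gives the surjectivity in one step rather than via a chain of alternating $\alpha$ and $\beta$ steps.
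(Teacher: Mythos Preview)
Your proof is correct and follows essentially the same argument as the paper: unpack the definition of $\alpha\circ\beta=1_Q$ to find, for each target class, an intermediate element in $[x]_\alpha$ mapping to it. Your additional remark that the left quasigroup structure plays no role is accurate and worth noting.
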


\begin{proof}
Since $\alpha\circ \beta=1_Q$, for every $x,y\in Q$ there exists $z\in Q$ such that $x\,\alpha\, z\,\beta\, y$. Therefore $z\in [x]_\alpha$ and $[z]_\beta=[y]_\beta$. Thus the canonical map $z\mapsto [z]_\beta$ restricted to $[x]_\alpha$ is onto.	
\end{proof}

\begin{lemma}\label{connected factor}
Let $Q$ be a left quasigroup and $\alpha\in Con(Q)$. The following are equivalent:
\begin{itemize}
\item[(i)] $Q/\alpha$ is connected.
\item[(ii)] $\alpha\circ \mathcal{O}_{\lmlt(Q)}= \mathcal{O}_{\lmlt(Q)}\circ \alpha=1_Q$.
\end{itemize}
\end{lemma}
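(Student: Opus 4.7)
The plan is to rewrite connectedness of $Q/\alpha$ purely in terms of the action of $\lmlt(Q)$ on $Q$, and then translate this into a statement about the composition of the congruences $\alpha$ and $\mathcal{O}=\mathcal{O}_{\lmlt(Q)}$. The bridge is the identity \eqref{pi(h)}, namely $[h(x)]_\alpha=\pi_\alpha(h)([x]_\alpha)$, together with the surjectivity of $\pi_\alpha:\lmlt(Q)\to\lmlt(Q/\alpha)$.

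First I would show the following intermediate reformulation: $Q/\alpha$ is connected if and only if for every $x,y\in Q$ there exists $h\in\lmlt(Q)$ with $h(x)\,\alpha\,y$. One direction uses \eqref{pi(h)} to rewrite $\pi_\alpha(h)([x]_\alpha)=[y]_\alpha$ as $h(x)\,\alpha\,y$; the other uses that $\pi_\alpha$ surjects onto $\lmlt(Q/\alpha)$, so every element moving $[x]_\alpha$ to $[y]_\alpha$ lifts to such an $h$.

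Next I would unpack the two compositions. By definition, $(x,y)\in \mathcal{O}\circ \alpha$ means there is $z\in Q$ with $x\,\mathcal{O}\,z$ and $z\,\alpha\,y$, i.e.\ $z=h(x)$ for some $h\in\lmlt(Q)$ and $h(x)\,\alpha\,y$. Hence $\mathcal{O}\circ\alpha=1_Q$ is exactly the reformulation of the previous step, giving (i)$\Leftrightarrow$($\mathcal{O}\circ\alpha=1_Q$). For the other composition, I would use that $\lmlt(Q)$ acts by permutations: $(x,y)\in\alpha\circ\mathcal{O}$ iff there is $z$ with $x\,\alpha\,z$ and $z\,\mathcal{O}\,y$, equivalently $y=h(z)$ for some $h\in\lmlt(Q)$, equivalently $h^{-1}(y)\,\alpha\,x$. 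So $\alpha\circ\mathcal{O}=1_Q$ says that for every $x,y$ some element of $\lmlt(Q)$ sends $y$ into $[x]_\alpha$, which by replacing $h$ with $h^{-1}$ and swapping the roles of $x$ and $y$ is again the condition in the intermediate step. Hence both compositions separately coincide with (i), which in particular yields their equality.

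I do not expect any serious obstacle: the proof is essentially a bookkeeping argument built on \eqref{pi(h)} and the surjectivity of $\pi_\alpha$. The only mild care to take is the symmetry argument turning $\mathcal{O}\circ\alpha=1_Q$ into $\alpha\circ\mathcal{O}=1_Q$, which relies precisely on the invertibility of the generators $L_x$ in a left quasigroup (so that $\mathcal{O}$ is defined via a group action, not merely a monoid action).
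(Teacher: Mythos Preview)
Your proposal is correct and follows essentially the same route as the paper: both arguments use \eqref{pi(h)} and the surjectivity of $\pi_\alpha$ to identify $\mathcal{O}_{\lmlt(Q/\alpha)}$ with (the image of) $\alpha\circ\mathcal{O}_{\lmlt(Q)}$, and then read off the equivalence. The paper is terser, handling only one of the two compositions and leaving the other to the symmetry of equivalence relations; you make that symmetry step explicit via invertibility in $\lmlt(Q)$, which is fine and arguably clearer.
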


\begin{proof}

Note that $[x]\,\mathcal{O}_{\lmlt(Q/\alpha)}\, [y]$ if and only if $[x]_\alpha=\pi_{\alpha}(h)([y]_\alpha)=[h(y)]_\alpha$, i.e. $x \, \alpha \, h(y)\, \mathcal{O}_{\lmlt(Q)}\,y$ for some $h\in \lmlt(Q)$. Therefore $[x]\,\mathcal{O}_{\lmlt(Q/\alpha)}\, [y]$ if and only if $x\,\alpha\circ \mathcal{O}_{\lmlt(Q)}\, y$. Thus, $\alpha\circ \mathcal{O}_{\lmlt(Q)} =1_Q$ if and only if $Q/\alpha$ is connected.
\end{proof}

Combining Lemma \ref{block onto} and Lemma \ref{connected factor} we have the following result.
\begin{corollary}\label{orbits are onto}
    Let $Q$ be a left quasigroup, $\alpha\in Con(Q)$, $x\in Q$ and $Q'=x^{\lmlt(Q)}$. If $Q/\alpha$ is connected then the canonical map $x\mapsto [x]_\alpha$ restricted to $Q'$ is onto.
\end{corollary}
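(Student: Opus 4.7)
The plan is to chain the two immediately preceding lemmas together, since the corollary is essentially their composition applied to the congruence pair $(\mathcal{O}_{\lmlt(Q)},\alpha)$.

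First, I would invoke Lemma \ref{connected factor}: the hypothesis that $Q/\alpha$ is connected yields in particular the factorization
\[
\mathcal{O}_{\lmlt(Q)}\circ\alpha = 1_Q.
\]
Next, I would observe that the equivalence class of $x$ under $\mathcal{O}_{\lmlt(Q)}$ is, by definition, the orbit $x^{\lmlt(Q)}=Q'$. Thus $Q'$ is precisely the block $[x]_{\mathcal{O}_{\lmlt(Q)}}$.

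Finally, I would apply Lemma \ref{block onto} with the roles of the two congruences taken as $\alpha \leftarrow \mathcal{O}_{\lmlt(Q)}$ and $\beta \leftarrow \alpha$ in the notation of that lemma; the factorization above is exactly the hypothesis $\alpha\circ\beta=1_Q$ needed there. The conclusion of Lemma \ref{block onto} then states that the restriction of the canonical map $y\mapsto[y]_\alpha$ to $[x]_{\mathcal{O}_{\lmlt(Q)}}=Q'$ is surjective onto $Q/\alpha$, which is exactly the desired statement.

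There is no real obstacle: the corollary is a direct logical combination of Lemmas \ref{block onto} and \ref{connected factor}, and the only substantive point to verify is the identification of the $\mathcal{O}_{\lmlt(Q)}$-class of $x$ with $Q'$, which is immediate from the definition of $\mathcal{O}_N$ for $N=\lmlt(Q)$.
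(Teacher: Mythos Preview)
Your proof is correct and follows exactly the paper's approach: the corollary is stated as a direct combination of Lemmas~\ref{block onto} and~\ref{connected factor}, and your identification of $Q'$ with $[x]_{\mathcal{O}_{\lmlt(Q)}}$ and choice of substitution are precisely what is needed.
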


\subsection{The Cayley kernel and Rack covers}

Let $Q$ be a rack. The {\it Cayley kernel} of $Q$ is the equivalence relation $\lambda_Q$ defined by setting $x\,\lambda_Q\, y$ if and only if $L_x=L_y$. Such a relation is a congruence for racks. We say that $Q$ is {\it faithful} if $\lambda_Q=0_Q$ and {\it superfaithful} if every subalgebra of $Q$ is faithful. If $\alpha$ is a congruence of $Q$ and $\alpha\leq \lambda_Q$ we say that $Q$ is a {\it cover} of $Q/\alpha$ (equivalently $\dis_\alpha=1$). If $p:Q\longrightarrow Q'$ is a surjective quandle morphism such that $\ker{p}\leq \lambda_Q$ we say that $p$ is a {\it covering map}. Congruences below the Cayley kernel have a universal algebraic understanding as {\it strongly abelian} congruences in the sense of \cite{comm} as explained in \cite[Section 6]{covering_paper}. 

\begin{lemma}\label{orbits are connected}
Let $Q$ be a rack and $\alpha\leq \lambda_Q$.  If $Q/\alpha$ is connected then $x^{\lmlt(Q)}$ is connected for every $x\in Q$.
\end{lemma}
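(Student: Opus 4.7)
The plan is to fix $x\in Q$, set $Q'=x^{\lmlt(Q)}$, and show that for an arbitrary $y\in Q'$ we can write a word in the left translations of elements of $Q'$ that sends $x$ to $y$. The starting point is the fact (noted in the text) that $Q'$ is a subalgebra of $Q$, so we can speak of $\lmlt(Q')$, and that the left translations of $Q'$ are exactly the restrictions to $Q'$ of the $L_z$ with $z\in Q'$.

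First I would pick $y\in Q'$ and choose $h\in\lmlt(Q)$ with $h(x)=y$, writing $h=L_{z_1}^{\epsilon_1}\cdots L_{z_n}^{\epsilon_n}$ with $z_i\in Q$ and $\epsilon_i\in\{\pm 1\}$. The goal is to replace each $z_i$ by an element of $Q'$ without changing the action of $h$. This is where the hypotheses enter: since $Q/\alpha$ is connected, Corollary \ref{orbits are onto} applied to the subalgebra $Q'$ yields that the canonical map $Q'\to Q/\alpha$ is surjective; hence for every $z_i\in Q$ there exists $z_i'\in Q'$ with $z_i'\,\alpha\,z_i$. Because $\alpha\le\lambda_Q$, the relation $z_i'\,\alpha\,z_i$ forces $L_{z_i'}=L_{z_i}$ in $\sym(Q)$.

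Consequently $h=L_{z_1'}^{\epsilon_1}\cdots L_{z_n'}^{\epsilon_n}$ as a permutation of $Q$, and restricting to the invariant subset $Q'$ gives an element of $\lmlt(Q')$ (the restriction of each $L_{z_i'}$ to $Q'$ is the left translation of $Q'$ by $z_i'$). Since $x,y\in Q'$, this restriction sends $x$ to $y$, proving that $\lmlt(Q')$ acts transitively on $Q'$.

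I do not expect any real obstacle: the only point that needs care is the applicability of Corollary \ref{orbits are onto}, which requires the connectedness of $Q/\alpha$ (given) and the fact that $Q'$ itself is a subalgebra containing $x$, together with the observation that $\alpha\le\lambda_Q$ upgrades ``$\alpha$-equivalent representatives'' into ``equal left translations on all of $Q$''. These are precisely the two hypotheses of the lemma, so the argument should be short and direct.
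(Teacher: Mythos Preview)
Your proof is correct and follows essentially the same route as the paper's: both use Corollary \ref{orbits are onto} to get that $Q'\to Q/\alpha$ is onto, then use $\alpha\le\lambda_Q$ to conclude that every left translation $L_z$ of $Q$ coincides with some $L_{z'}$ for $z'\in Q'$, so that $\lmlt(Q')$ already acts transitively on $Q'$. The paper phrases this as $\{L_z:z\in Q'\}=\{L_z:z\in Q\}$ via the surjection $Q'\to Q/\lambda_Q$, while you spell out the replacement of each $z_i$ by some $z_i'\in Q'$; these are the same argument at different levels of detail.
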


\begin{proof}
Let $Q'=x^{\lmlt(Q)}$. The rack $Q/\alpha$ is connected and so according to Corollary \ref{orbits are onto} the canonical map $y\mapsto [y]_{\alpha}$ restricted to $Q'$ is onto. In particular, $Q'\longrightarrow Q/\alpha\longrightarrow Q/\lambda_Q$ in onto since $\alpha\leq \lambda_Q$. Therefore $\setof{L_x}{x\in Q'}=\setof{L_x}{x\in Q}$. Therefore $Q'=x^{\lmlt(Q')}$ and so $Q'$ is connected.
\end{proof}

Let $Q$ be a connected quandle, $G=\dis(Q)$, $H=\dis(Q)_x$ and $f=\widehat{L_x}$. According to Proposition \ref{coset reps} we have that $Q\cong \mathcal{Q}(G,H,f)$. If $N\leq \dis(Q)$ is a normal subgroup of $\lmlt(Q)$ we can define the principal quandle $Q_N=\mathcal{Q}(G/N,f_N)$ where $f_N$ is the  automorphism induced by $f$ on the factor $G/N$. 

\begin{lemma}\label{Q_N}
Let $Q$ be a connected quandle and $N\leq \dis(Q)$ be a normal subgroup of $\lmlt(Q)$. The maps in the following diagram are surjective quandle homomorphisms and the diagram is commutative:
	\begin{center}
$\xymatrixrowsep{0.25in}
		\xymatrixcolsep{0.25in}
		\xymatrix{ 
			 Q_1  \ar@{->}[d]\ar@{->}[r]& Q_N\ar@{->}[d]&\\
			 Q  \ar@{->}[r] & Q/\mathcal{O}_N&
		},\quad \xymatrixrowsep{0.25in}
		\xymatrixcolsep{0.25in}
		\xymatrix{ 
			 g  \ar@{->}[d]\ar@{->}[r]& gN\ar@{->}[d]&\\
			 g\dis(Q)_x  \ar@{->}[r] & [g(x)]_{\mathcal{O}_N}&
		}.$
	\end{center}
 Moreover, $Q_N$ is a connected cover of $Q/\mathcal{O}_N$.
\end{lemma}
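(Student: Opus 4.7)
The plan is to identify $Q$ with $\mathcal{Q}(G,H,f)$ via Proposition \ref{coset reps}, where $G=\dis(Q)$, $H=\dis(Q)_x$, $f=\widehat{L_x}$, and the identifying bijection is $gH\leftrightarrow g(x)$. Since $N\trianglelefteq\lmlt(Q)$ and $L_x\in\lmlt(Q)$, the automorphism $f$ preserves $N$ by conjugation, so $f_N\in\aut{G/N}$ is well-defined and $Q_N=\mathcal{Q}(G/N,f_N)$ makes sense. The left map $Q_1\to Q$ and the top map $Q_1\to Q_N$ are the standard canonical projections of a principal quandle onto, respectively, a coset quandle and a principal quotient, while the bottom map $Q\to Q/\mathcal{O}_N$ is the canonical surjection associated to the congruence $\mathcal{O}_N$; all three are surjective quandle homomorphisms.

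For the right-hand map $gN\mapsto[g(x)]_{\mathcal{O}_N}$, I would first check well-definedness: if $g'=gn$ with $n\in N$, then $gng^{-1}\in N$ by normality of $N$ in $\lmlt(Q)$, so $g'(x)=gng^{-1}(g(x))$ is $\mathcal{O}_N$-related to $g(x)$. That this map is a quandle morphism then reduces to the identity $g(x)*h(x)=gf(g^{-1}h)(x)$ inherited from $Q\cong\mathcal{Q}(G,H,f)$, since the same formula, read modulo $N$, defines the operation of $Q_N$. Surjectivity is immediate from the connectedness of $Q$, and commutativity of the square follows by tracing $g$ around both paths.

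To show $Q_N$ is connected, I would use that $L_{g(x)}L_x^{-1}=gf(g^{-1})$ in $G$; since $Q$ is connected, $\dis(Q)$ is generated by elements of this form, so $\{gf(g^{-1}):g\in G\}$ generates $G$ and its image generates $G/N$. A direct computation in $\mathcal{Q}(G/N,f_N)$ shows that $L_{gN}L_{1N}^{-1}$ acts on $G/N$ as left multiplication by $gf(g^{-1})N$, so $\lmlt(Q_N)$ contains the (transitive) left regular action of $G/N$ on itself, and hence $Q_N$ is connected.

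The hardest step, as I expect, is the covering property: that the kernel of $Q_N\to Q/\mathcal{O}_N$ is contained in the Cayley kernel $\lambda_{Q_N}$. If $gN$ and $g'N$ have the same image, the same unwinding as above, together with the normality of $N$, gives $g'N=ghN$ for some $h\in H$. The crucial observation is that $h(x)=x$ forces $hL_x=L_{h(x)}h=L_xh$, so $f(h)=h$ and hence $hf(h^{-1})=1$. A short computation with $L_{gN}(kN)=gf(g^{-1}k)N$ in $Q_N$ then reduces $L_{gN}=L_{ghN}$ to exactly this identity, showing $gN\,\lambda_{Q_N}\,g'N$ and completing the proof that $Q_N\to Q/\mathcal{O}_N$ is a covering map.
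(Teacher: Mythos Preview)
Your proof is correct and follows essentially the same route as the paper: both identify $Q$ with $\mathcal{Q}(G,H,f)$, verify well-definedness of $gN\mapsto[g(x)]_{\mathcal{O}_N}$ via normality of $N$, and establish the covering property by writing any $g'N$ with the same image as $ghN$ for some $h\in H=\dis(Q)_x$, then using that $h(x)=x$ forces $f(h)=h$ so that $L_{ghN}=L_{gN}$ in $Q_N$. The only notable difference is that for connectedness of $Q_N$ the paper simply cites that $Q_1$ is connected (from \cite{covering_paper}) and passes to the quotient, whereas you give a self-contained argument showing $\{gf(g^{-1})N\}$ generates $G/N$ and realises $\dis(Q_N)$ acting by left translations; both are fine.
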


\begin{proof}
Let $G=\dis(Q)$. The subgroup $N$ is a $\widehat{L_x}$-invariant subgroup. Therefore we can define the quandle $Q_N$ and the map $$Q_1\longrightarrow Q_N,\quad g\mapsto gN,$$
is a well defined onto quandle morphism.

Let $\alpha=\mathcal{O}_N$. Assume that $g^{-1} h\in N$. Then $h=ng$ for some $n\in N$ and so we have that $h(x)=ng(x)\, \alpha \, g(x)$ for every $x\in Q$. Thus the map 
$$ Q_1\longrightarrow Q_N\longrightarrow Q/\alpha,\quad g\mapsto gN\mapsto [g(x)]_\alpha$$
is well defined. The map $gN\mapsto [g(x)]_\alpha=\pi_\alpha(g)([x]_\alpha)$ (see \eqref{pi(h)}) is onto since $Q/\alpha$ is connected and $\pi_\alpha$ is onto. Moreover $$g N *h N=g L_x g^{-1} h L_x^{-1}N\mapsto [gL_x g^{-1} h L_x^{-1}(x)]_\alpha=[L_{g(x)}h(x)]_\alpha=[g(x)]_\alpha*[h(x)]_\alpha,$$
so the map is a quandle homomorphism.

According to \cite[Proposition 3.2]{covering_paper} the quandle $Q_1$ is connected, and so it is $Q_N$.

Assume that $[g(x)]_\alpha=[h(x)]_\alpha$. Then $g^{-1}h\in \dis(Q)_{[x]_\alpha}$. Since $N$ is transitive on each block of $\alpha$ then $\dis(Q)_{[x]_\alpha}=N \dis(Q)_x$ and $\dis(Q)_x\subseteq Fix(\widehat{L_x})$. Hence, $h=g n s$ for some $n\in N$ and $s\in Fix(\widehat{L_x})$. Therefore 
\begin{align*}
hN*tN&=gnsL_x (s^{-1}n^{-1}g^{-1}t)L_x^{-1} N\\
&=g L_x g^{-1} t L_x^{-1} \underbrace{L_x (g^{-1} t)^{-1} L_x^{-1} nL_x (n^{-1}g^{-1}t)L_x^{-1} }_{\in N}N\\
&=g L_x g^{-1} t L_x^{-1}N=gN*tN.    
\end{align*}
for every $t\in G$.
Then the kernel of the map is contained in the Cayley kernel.
\end{proof}

We can apply Lemma \ref{Q_N} to the relative displacement groups.

\begin{corollary}\label{cor 1}
    Let $Q$ be a finite quandle and $G=\dis(Q)$. If $\alpha=\mathcal{O}_{\dis_\alpha}$ then $Q_{\dis_\alpha}$ is a cover of $Q/\alpha$. 
    \end{corollary}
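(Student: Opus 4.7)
The plan is to deduce this corollary as a direct specialization of Lemma \ref{Q_N}, taking the normal subgroup $N$ to be $\dis_\alpha$ itself. So the work splits into verifying the two structural hypotheses of that lemma for this particular $N$, and then checking that the ambient quotient $Q/\mathcal{O}_N$ coincides with $Q/\alpha$ under the standing assumption.

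First I would record that $\dis_\alpha$ qualifies as an $N$ in Lemma \ref{Q_N}. The containment $\dis_\alpha \leq \dis(Q)$ is immediate from definition \eqref{dis_alpha}: every generator $L_xL_y^{-1}$ (with $x\,\alpha\,y$) is also a generator of $\dis(Q)=\dis_{1_Q}$. Normality of $\dis_\alpha$ in $\lmlt(Q)$ is not a new fact either; it was noted in the paragraph following \eqref{dis_alpha}, citing \cite[Section 3.1]{CP}. So both prerequisites of Lemma \ref{Q_N} are automatically satisfied with $N=\dis_\alpha$, and the principal quandle $Q_{\dis_\alpha}=\mathcal{Q}(G/\dis_\alpha,\,f_{\dis_\alpha})$ is well-defined.

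Second, I would invoke the hypothesis $\alpha=\mathcal{O}_{\dis_\alpha}$, which is exactly the equation $\alpha=\mathcal{O}_N$ in the notation of the lemma. Lemma \ref{Q_N} applied to $N=\dis_\alpha$ therefore produces the commutative square of surjective quandle morphisms linking $Q_1\to Q$ with $Q_{\dis_\alpha}\to Q/\mathcal{O}_{\dis_\alpha}$, and the lemma's conclusion that $Q_N$ is a connected cover of $Q/\mathcal{O}_N$ becomes the assertion that $Q_{\dis_\alpha}$ is a cover of $Q/\alpha$, which is the statement to prove.

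There is essentially no obstacle here: the corollary is a notational repackaging of Lemma \ref{Q_N} for the natural choice $N=\dis_\alpha$. The only thing worth flagging is that Lemma \ref{Q_N} is formulated for connected quandles, so the argument is implicitly using that $Q$ is connected (the natural setting in the paper for this section); the finiteness of $Q$ is not literally invoked in the verification above and is presumably there to match the framework in which the corollary will later be applied.
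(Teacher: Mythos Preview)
Your argument is correct and matches the paper's approach exactly: the corollary is stated immediately after the sentence ``We can apply Lemma \ref{Q_N} to the relative displacement groups,'' with no separate proof, so the intended deduction is precisely the specialization $N=\dis_\alpha$ that you carry out. Your remark that connectedness of $Q$ is implicitly used (since Lemma \ref{Q_N} and the very definition of $Q_N$ rely on the coset representation of Proposition \ref{coset reps}) is accurate and worth noting.
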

 

\section{Quandle cohomology}    \label{cohomology}
\subsection{Constant quandle cocycles}

Let $Q$ be a rack, $S$ be a set and $\theta:Q\times Q\longrightarrow Sym_S$. We can define an operation on $E=Q\times S$ as 
\begin{equation}\label{Q x S}
    (x,a)*(y,b)=(xy,\theta_{x,y}(b))
\end{equation}
for every $x,y\in Q$ and $a,b\in S$. It is easy to check that $E$ is a rack if and only if $Q$ is a rack and
\begin{align}\label{CC}
\theta_{xy,xz} \theta_{x,z}=\theta_{x,yz}\theta_{y,z}\tag{CC}
\end{align}
holds for every $x,y,z\in Q$. A map satisfying \eqref{CC} is called {\it constant cocycle}. We denote the rack $(Q\times S,\cdot)$ where $*$ is defined as in \eqref{Q x S} as $Q\times_\theta S$ and we call the map $p:Q\times_\theta S\longrightarrow Q$ defined by $(x,a)\mapsto x$ the {\it canonical projection} onto $Q$. Note that $p$ is a rack homomorphism.

 A constant cocycle $\theta$ is a {\it quandle cocycle} if 
\begin{align}\label{quandle cocycle}
    \theta_{x,x}=1\tag{QC}
\end{align}
 for every $x\in Q$. In particular, $Q\times_\theta S$ is a quandle if and only if $Q$ is a quandle and $\theta$ is a quandle cocycle. We denote the set of quandle cocycles of $Q$ with values in $\sym_S$ as $Z^2(Q,S)$. 
 
 Covers of a connected quandle $Q$ can be constructed as $Q\times_\theta S$ for suitable $S$ and $\theta$ \cite{AG,MeAndPetr}. 

\begin{example}\label{ex trivial cov}
Let $Q$ be a rack. We denote by 
$$\textbf{1}:Q\times Q\longrightarrow \sym_S,\quad (x,y)\longrightarrow 1$$
the {\it trivial cocycle} over $S$. The rack $E=Q\times_{\textbf{1}} S$ is the direct product of $Q$ and the projection quandle over $S$. Such cover of $Q$ and is said to be a {\it trivial cover} of $Q$. Note that $(x,a)^{\lmlt(E)}=x^{\lmlt(Q)}\times \{a\}$. Hence, if $E$ is connected then $|S|=1$ and so $E\cong Q$.
\end{example}

We say that $\theta,\theta'\in Z^2(Q,S)$ are {\it cohomologous} if there exists $\gamma:Q\longrightarrow \sym_S$ such that
\begin{equation}\label{cohomologous}
\theta'_{x,y}=\gamma_{xy}\theta_{x,y}\gamma_y^{-1}    
\end{equation}
for every $x,y\in Q$. The set of equivalence classes with respect to the relation of being cohomologous is denoted by $H^2(Q,S)$ and called the {\it quandle cohomology set}. It is easy to check that the cocycles $\theta,\theta'$ satisfy \eqref{cohomologous} if and only if there exists an isomorphism $\varphi:Q \times_\theta S   \longrightarrow Q\times_{\theta'} S$ such that the following diagram is commutative:
\begin{displaymath}\xymatrixcolsep{63pt}\xymatrixrowsep{30pt}\xymatrix{
    Q\times_\theta S\ar[r]^p
    \ar[d]^{\varphi} & Q \\
    Q\times_{\theta'} S \ar[ur]^p&}  \xymatrixcolsep{63pt},\qquad \xymatrixcolsep{63pt}\xymatrixrowsep{30pt}\xymatrix{
    (x,a)\ar[r]
    \ar[d] & x \\
    (x,\gamma_x(a)) \ar[ur]&}  \xymatrixcolsep{63pt}.
\end{displaymath}
In this case, $Q\times_\theta S$ and $Q\times_{\theta'} S$ are said to be {\it equivalent covers} (as they are isomorphic in a suitable category, see \cite{Eisermann}). 

Let $Q$ be a quandle, $h\in\lmlt(Q)$, $y\in Q$ and $\theta\in Z^2(Q,S)$. Following \cite{covering_paper} with a slightly different notation, we define a formal expression $\Theta_{g(y)}$ recursively, by setting:
\[ \Theta_y=1, \qquad \Theta_{x*g(y)}=\theta_{x,g(y)}\Theta_{g(y)},\qquad \Theta_{x\ldiv g(y)}=\theta_{x,x\ldiv g(y)}^{-1}\Theta_{g(y)}. \]

\begin{lemma}\label{l:Theta}\cite[Lemma 5.5]{covering_paper}
Let $Q$ be a left quasigroup and $E=Q\times_\theta A$.
Then 
\[h(v,b)) = (\pi_{\ker{p}}(h)(v),\Theta_{\pi_{\ker{p}}(h)(v))}(b))\]
for every $h\in \lmlt(E)$.
\end{lemma}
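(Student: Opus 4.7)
The plan is to proceed by induction on the length of a word for $h \in \lmlt(E)$ in the generators $L_{(x,a)}^{\pm 1}$, $(x,a) \in E$. The base case is $h = 1$: then $h(v,b) = (v,b)$, $\pi_{\ker{p}}(h)(v) = v$, and by definition $\Theta_v = 1 \in \sym_A$, so the formula holds trivially.

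For the inductive step, write $h' = L_{(x,a)}^{\varepsilon} h$ with $\varepsilon \in \{\pm 1\}$ and assume the claim for $h$. Since $p$ is a homomorphism of left quasigroups, the induced map $\pi_{\ker{p}}$ is a group homomorphism with $\pi_{\ker{p}}(L_{(x,a)}) = L_x$, so the first coordinate of $h'(v,b)$ equals $L_x^{\varepsilon}(\pi_{\ker{p}}(h)(v)) = \pi_{\ker{p}}(h')(v)$, as needed. For the second coordinate, set $u = \pi_{\ker{p}}(h)(v)$. For $\varepsilon = +1$, the inductive hypothesis and the definition of $*$ on $E$ give
$$L_{(x,a)} h(v,b) = (x,a) * (u, \Theta_u(b)) = (x * u,\, \theta_{x, u} \Theta_u(b)),$$
and the recursion $\Theta_{x*u} = \theta_{x,u} \Theta_u$ identifies the second coordinate with $\Theta_{\pi_{\ker{p}}(h')(v)}(b)$.

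For $\varepsilon = -1$, solving $(x,a)*(w,c) = (v,b)$ for $(w,c)$ yields the formula $L_{(x,a)}^{-1}(v,b) = (x \ldiv v,\, \theta_{x, x \ldiv v}^{-1}(b))$. Applying this to $(u, \Theta_u(b))$ and invoking the second clause $\Theta_{x \ldiv u} = \theta_{x, x \ldiv u}^{-1} \Theta_u$ of the recursion gives the analogous identification for $h' = L_{(x,a)}^{-1} h$.

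The only potentially delicate point is that the formal recursion for $\Theta_{g(y)}$ is defined along a specific sequence of operations expressing $g(y)$, and a priori two different expressions for $\pi_{\ker{p}}(h)(v)$ might yield different permutations. However, the induction establishes both the formula and this well-definedness a posteriori: since the left-hand side $h(v,b)$ depends only on $h \in \lmlt(E)$ (and on $(v,b)$), any two words for $\pi_{\ker{p}}(h)$ obtained by lifting to $\lmlt(E)$ and projecting back produce the same $\Theta_{\pi_{\ker{p}}(h)(v)}(b)$ for every $b \in A$. This conceptual point is the main thing worth stressing; the two inductive case verifications themselves are routine coordinatewise computations.
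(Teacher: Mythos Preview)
Your proof is correct. The paper does not supply its own argument for this lemma, merely citing \cite[Lemma~5.5]{covering_paper}; your induction on word length in the generators $L_{(x,a)}^{\pm1}$ is the standard verification and matches what one expects in the cited source. Your closing remark about the formal nature of $\Theta$ is accurate and worth making explicit: the recursion depends on a chosen word, and the identity you prove shows a posteriori that any two words for the same $h\in\lmlt(E)$ yield the same permutation on the second coordinate.
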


\begin{proposition}\label{trivial for connected} 
Let $Q$ be a connected quandle, $\theta\in Z^2(Q,S)$ and $E=Q\times_\theta S$. 
The following are equivalent:
\begin{itemize}
\item[(i)] $\theta\sim\textbf{1}  $.
\item[(ii)] $\ker{p}\wedge \mathcal{O}_{\lmlt(E)}=0_E$.
\item[(iii)] $p|_{x^{\lmlt(E)}}:x^{\lmlt(E)}\longrightarrow Q$ is an isomorphism for every $x\in E$. 
\end{itemize}
\end{proposition}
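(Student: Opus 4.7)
My plan is to establish the three conditions cyclically as (i) $\Rightarrow$ (ii) $\Rightarrow$ (iii) $\Rightarrow$ (i).

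For (i) $\Rightarrow$ (ii), I would use the isomorphism of covers $\varphi\colon E\to Q\times_{\mathbf 1}S$ provided by the hypothesis. In the trivial cover one has $L_{(x,a)}(y,b)=(xy,b)$, so each $\lmlt$-orbit equals $y^{\lmlt(Q)}\times\{b\}=Q\times\{b\}$ by connectedness of $Q$, and two elements of such an orbit sharing a first coordinate must coincide. Since $\varphi$ commutes with the projection onto $Q$, it transports both $\ker{p}$ and $\mathcal{O}_{\lmlt(E)}$ to their analogues on $Q\times_{\mathbf 1}S$, and the conclusion follows.

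For (ii) $\Rightarrow$ (iii), I would use that the $\lmlt(E)$-orbits are subalgebras, so $p$ restricts to a quandle homomorphism on $x^{\lmlt(E)}$. Surjectivity onto $Q$ follows from Lemma \ref{l:Theta}, which gives $p(h(x))=\pi_{\ker{p}}(h)(p(x))$, combined with the surjectivity of $\pi_{\ker{p}}\colon \lmlt(E)\to\lmlt(Q)$ and the connectedness of $Q$: the image of $x^{\lmlt(E)}$ is $p(x)^{\lmlt(Q)}=Q$. Injectivity is immediate from (ii), since any two elements of $x^{\lmlt(E)}$ with the same image under $p$ would lie in $\ker{p}\wedge\mathcal{O}_{\lmlt(E)}=0_E$.

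The key step is (iii) $\Rightarrow$ (i). I would fix $x_0\in Q$ and, for each $a\in S$, set $O_a=(x_0,a)^{\lmlt(E)}$. By (iii), each $O_a$ meets every fibre of $p$ in exactly one point; in particular the $O_a$ are pairwise distinct (different second coordinates above $x_0$) and exhaust all orbits, since any orbit intersects $p^{-1}(x_0)$ at a unique point. I would then define $\gamma_y\colon S\to S$ by declaring $(y,\gamma_y(a))$ to be the unique element of $O_a$ above $y$; this is a bijection because $p^{-1}(y)=\{y\}\times S$ is partitioned into singletons by the sets $O_a\cap p^{-1}(y)$, and $\gamma_{x_0}=\mathrm{id}$ by construction. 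Since $L_{(x,c)}\in \lmlt(E)$ preserves orbits, the element $L_{(x,c)}(y,b)=(xy,\theta_{x,y}(b))$ must sit in $(y,b)^{\lmlt(E)}=O_{\gamma_y^{-1}(b)}$, forcing $\theta_{x,y}(b)=\gamma_{xy}(\gamma_y^{-1}(b))$, i.e.\ $\theta_{x,y}=\gamma_{xy}\gamma_y^{-1}$, which is precisely the relation witnessing $\theta\sim\mathbf 1$.

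The main obstacle is the last implication, where one has to pass from the geometric content of (iii) — that orbits project bijectively onto $Q$ — to the algebraic existence of a cohomological trivialization $\gamma$. The trick is that (iii) simultaneously enumerates orbits via their intersection with a distinguished fibre $p^{-1}(x_0)$ and produces fibre-wise bijections $\gamma_y$ indexed by that enumeration; once these are in hand, the cocycle identity for $\theta$ falls out automatically from $\lmlt(E)$-invariance of orbits.
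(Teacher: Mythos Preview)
Your proof is correct and follows essentially the same route as the paper. The paper proves (ii) $\Leftrightarrow$ (iii), (i) $\Rightarrow$ (ii), and (ii) $\Rightarrow$ (i), while you run the cycle (i) $\Rightarrow$ (ii) $\Rightarrow$ (iii) $\Rightarrow$ (i); the content of each implication matches. The only cosmetic difference is in the key step: the paper constructs $\gamma_y$ via the recursive expressions $\Theta_{h(x_0)}$ of Lemma \ref{l:Theta} and checks well-definedness from (ii), whereas you read $\gamma_y$ off the orbit--fibre intersections guaranteed by (iii); unwinding Lemma \ref{l:Theta} shows that $(y,\Theta_{h(x_0)}(a))$ is precisely the unique point of $O_a$ above $y$, so the two descriptions of $\gamma$ coincide.
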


\begin{proof}
(ii) $\Leftrightarrow$ (iii) According to Corollary \ref{orbits are onto} the restriction of $p$ to $x^{\lmlt(E)}=[x]_{\mathcal{O}_{\lmlt(E)}}$ is onto. Thus the restriction of $p$ is an isomorphism if and only if $\ker{p}\wedge \mathcal{O}_{\lmlt(E)}=0_E$.

(i) $\Rightarrow$ (ii) Let $\alpha=\ker{p}\wedge \mathcal{O}_{\lmlt(E)}$ and let $\varphi: Q\times_\theta S\longrightarrow Q\times_{\textbf{1}} S$ be the isomorphism defined by $\varphi(x,a)=(x,\gamma_x(a))$ for every $(x,a)\in Q\times S$. Assume that $(x,a)\,\alpha \, (y,b)$ then $x=y$ and so $\varphi(x,a)=(x,\gamma_x(a))$ and $\varphi(x,b)=(x,\gamma_x(b))$ are in the same orbit in $Q\times_\textbf{1} S$, since $\varphi(h(x,a))=\pi_{\ker{\varphi}}(h)\varphi(x,a)$ for every $h\in \lmlt(E)$. According to Example \ref{examples}(i) then $\gamma_x(a)=\gamma_x(b)$ and so $a=b$.

(ii) $\Rightarrow$ (i) Assume that $\alpha=\ker{p}\wedge \mathcal{O}_{\lmlt(E)}=0_E$. If $\pi_{\ker{p}}(h)(x)=\pi_{\ker{p}}(g)(x)$, then according to Lemma \ref{l:Theta} we have $$h(x,a)=(\pi_{\ker{p}(h)}(x),\Theta_{\pi_{\ker{p}}(h)(x)})\,\alpha\,g(x,a)=(\pi_{\ker{p}(g)}(x),\Theta_{\pi_{\ker{p}}(g)(x)}(a))$$
for every $a\in S$. Therefore $\Theta_{\pi_{\ker{p}}(h)(x)}=\Theta_{\pi_{\ker{p}}(g)(x)}$. The quandle $Q$ is connected, so given $x_0\in Q$ for every $y\in Q$ there exists $h\in\lmlt(Q)$ such that $y=h(x_0)$. The map defined as $\gamma(x_0)=1$ and $\gamma(y)=\gamma(h(x_0))=\Theta_{h(x_0)}$ is well-defined. Hence $$\gamma(xy)\textbf{1}_{x,y}\gamma(y)^{-1}=\gamma(L_x h(x_0))\gamma(h(x_0))^{-1}=\theta_{x,y}\Theta_{h(x_0)}\Theta_{h(x_0)}^{-1}=\theta_{x,y}.$$ So $\theta\sim \textbf{1}$.
\end{proof}

Clearly if $\theta\sim \textbf{1}$ then $Q\times_\theta S$ is isomorphic to $Q\times_{\textbf{1}} S$, since we have an isomorphism respecting the blocks of the canonical projection onto $Q$. If $Q$ is finite and connected also the converse holds.
\begin{corollary}\label{trivial for connected finite}
    Let $Q$ be a finite connected quandle, $\theta\in Z^2(Q,S)$ and $E=Q\times_\theta S$. The following are equivalent: 
\begin{enumerate}
    \item[(i)] $\theta\sim\textbf{1}$. %
    \item[(ii)] $|Q|=|x^{\lmlt(E)}|$ for every $x\in E$. 
     \item[(iii)] $Q\times_\theta S\cong Q\times_{\textbf{1}} S$. 
\end{enumerate}
\end{corollary}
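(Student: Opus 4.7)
The plan is to close a small cycle of implications (i) $\Rightarrow$ (iii) $\Rightarrow$ (ii) $\Rightarrow$ (i), piggybacking on Proposition \ref{trivial for connected} and using finiteness only in the last step to upgrade a surjection to a bijection.

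For (i) $\Rightarrow$ (iii) I just invoke the commutative diagram displayed right after \eqref{cohomologous}: being cohomologous to $\mathbf{1}$ explicitly provides an isomorphism $(x,a)\mapsto(x,\gamma_x(a))$ from $Q\times_\theta S$ onto $Q\times_{\mathbf{1}} S$, so (iii) is literally the hypothesis plus a choice of witness.

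For (iii) $\Rightarrow$ (ii) I use Example \ref{ex trivial cov}, which computes the $\lmlt(Q\times_{\mathbf{1}} S)$-orbits as $x^{\lmlt(Q)}\times\{a\}$. Since $Q$ is connected these all have cardinality $|Q|$, and any quandle isomorphism preserves orbit sizes, so the orbits of $\lmlt(E)$ also have size $|Q|$.

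The main step is (ii) $\Rightarrow$ (i), the only one that needs finiteness. The canonical projection $p:E\longrightarrow Q$ is a surjective quandle morphism and $E/\ker{p}\cong Q$ is connected, so Corollary \ref{orbits are onto} gives that the restriction $p|_{x^{\lmlt(E)}}:x^{\lmlt(E)}\longrightarrow Q$ is onto for every $x\in E$. Under hypothesis (ii), the domain and codomain of this surjection are finite of the same cardinality, so it is bijective, hence an isomorphism of quandles. This is exactly condition (iii) of Proposition \ref{trivial for connected}, which yields $\theta\sim\mathbf{1}$ and finishes the corollary. The only subtlety to flag is that finiteness is essential here: in the infinite case Corollary \ref{orbits are onto} still provides surjectivity of $p|_{x^{\lmlt(E)}}$, but equicardinality of the blocks of $\ker{p}\wedge \mathcal{O}_{\lmlt(E)}$ and those of $\ker{p}$ need not force them to coincide without a finiteness (or pigeonhole-type) argument.
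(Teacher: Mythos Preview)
Your proof is correct and follows essentially the same approach as the paper: both rely on Corollary~\ref{orbits are onto} to get surjectivity of $p|_{x^{\lmlt(E)}}$, use finiteness to upgrade the surjection to a bijection, and then invoke Proposition~\ref{trivial for connected}. The only difference is organizational---you close the cycle (i)$\Rightarrow$(iii)$\Rightarrow$(ii)$\Rightarrow$(i), while the paper proves (i)$\Leftrightarrow$(ii) and (i)$\Leftrightarrow$(iii) separately---but the underlying arguments are identical.
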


\begin{proof}
Let $(x,a)\in E$ and $E'=x^{\lmlt(E)}$. 

(i) $\Leftrightarrow$ (ii) The canonical map restricted to $E'$ is onto. Therefore $|E'|\geq |Q|$. Hence $E'\cong Q$ if and only if $|E'|=|Q|$. Therefore (i) and (ii) are equivalent according to Proposition \ref{trivial for connected}. 

(i) $\Rightarrow$ (iii) Clear.

(iii) $\Rightarrow$ (i) Assume that $Q\times_\theta S\cong Q\times_{\textbf{1}} S$. Then $|E'|=|Q|$ and the canonical map restricted to $E'$ is onto. Then necessarily $p$ restricted to $E'$ is also injective. Thus, we can conclude by Proposition \ref{trivial for connected}.
\end{proof}

For faithful quandles we can drop the finiteness assumption in Corollary \ref{trivial for connected finite}.

\begin{lemma}\label{trivial for faithful}
Let $Q$ be a faithful quandle and $\theta\in Z^2(Q,S)$. The following are equivalent:
\begin{itemize}
\item[(i)] $Q\times_\theta S\cong Q\times_\textbf{1} S$.
\item[(ii)] $\theta\sim \textbf{1}$.

\end{itemize}
\end{lemma}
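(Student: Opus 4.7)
The plan is to reduce the problem to a Cayley-kernel computation that crucially uses faithfulness of $Q$. The implication (ii) $\Rightarrow$ (i) is immediate: the paragraph following \eqref{cohomologous} already records the required isomorphism $(x,a) \mapsto (x, \gamma_x(a))$ directly from the cohomology relation, so only (i) $\Rightarrow$ (ii) needs an argument.

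For this, I would first identify the Cayley kernel of an arbitrary cover $E = Q \times_\theta S$ with a faithful base. From the formula $L_{(x,a)}(z,c) = (xz, \theta_{x,z}(c))$ one sees that $L_{(x,a)} = L_{(y,b)}$ forces $L_x = L_y$ in $Q$, which by faithfulness yields $x = y$; conversely, once $x = y$ holds the two left translations agree irrespective of $a,b$ and $\theta$. Hence $\lambda_E = \ker p$, and the same formula holds for $Q \times_\textbf{1} S$.

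Next, I would fix any isomorphism $\phi: Q \times_\theta S \to Q \times_\textbf{1} S$ and use that it preserves the Cayley kernel. Quotienting by $\ker p$ on each side produces a canonical copy of $Q$, so $\phi$ descends to an automorphism $\sigma$ of $Q$ and is forced to take the block form
\[ \phi(x, a) = (\sigma(x), \eta_x(a)) \]
for some bijections $\eta_x \in \sym_S$. Expanding the identity $\phi((x,a)*(y,b)) = \phi(x,a)*\phi(y,b)$ and reading off second coordinates gives $\eta_{xy}\theta_{x,y} = \eta_y$, so that $\gamma_x := \eta_x^{-1}$ satisfies $\theta_{x,y} = \gamma_{xy}\textbf{1}_{x,y}\gamma_y^{-1}$, i.e.\ $\theta \sim \textbf{1}$.

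The main (mild) hurdle is the Cayley-kernel identification: without faithfulness an isomorphism could genuinely scramble the $\theta$-fibres in a way invisible to cocycles, and this is exactly what makes the statement fail in general. Once $\lambda_E = \ker p$ is in place, the block form of $\phi$ is automatic and the cohomology relation drops out of a single line of computation.
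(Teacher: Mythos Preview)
Your argument is correct. The Cayley-kernel identification $\lambda_{Q\times_\theta S}=\ker p$ is exactly where faithfulness enters, and once this is in place the block form $\phi(x,a)=(\sigma(x),\eta_x(a))$ and the resulting cohomology relation follow as you describe.

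The paper proceeds along the same line but outsources the work: it invokes \cite[Corollary~3.5]{covering_paper}, which states that for faithful $Q$ one has $Q\times_\theta S\cong Q\times_{\theta'}S$ if and only if $\theta$ is cohomologous to $\theta'\circ(g\times g)$ for some $g\in\aut{Q}$, and then observes that $\textbf{1}\circ(g\times g)=\textbf{1}$. Your proof is precisely an explicit unpacking of that corollary in the special case $\theta'=\textbf{1}$ (the automorphism $\sigma$ you produce is the $g$ in the cited statement). So the approaches coincide; yours is self-contained rather than relying on the external reference.
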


\begin{proof}

(i) $\Rightarrow$ (ii) According to Corollary 3.5 of \cite{covering_paper}, $Q\times_\theta S\cong Q\times_{\textbf{1}} S$ if and only if $\theta$ is cohomologous to $\textbf{1}\circ (g\times g)$ where $g\in \aut{Q}$. Therefore $\theta\sim \textbf{1}\circ (g\times g)=\textbf{1}$.

(ii) $\Rightarrow$ (i) Clear.
\end{proof}

Note that we can define constant cocycles with values in any group $G$ and the conditions \eqref{CC} and \eqref{quandle cocycle} still make sense and also Lemma \ref{l:Theta} can be extended to cocycles with values in arbitrary groups. The set of constant quandle cocycles of $Q$ with values in a group $G$ is denoted by $Z^2(Q,G)$. We can also define $H^2(Q,G)$ as the set of equivalence classes of $Z^2(Q,G)$ under the equivalence defined in the very same way as in \eqref{cohomologous} for some $\gamma:Q\longrightarrow G$.

So, if $Q$ is a quandle and $\theta\in Z^2(Q,G)$, we can consider $\theta$ as a cocycle with values in $\sym_G$ identifying the elements of $G$ with their canonical left action, hence we can consider $\theta\in Z^2(Q,\sym_G)$. Clearly if $[\theta]_\sim=[\textbf{1}]_\sim\in H^2(Q,G)$ then $[\theta]_\sim=[\textbf{1}]_\sim\in H^2(Q,\sym_G)$. For connected quandles also the converse holds.

\begin{corollary}   \label{H^2(G)}    
Let $Q$ be a connected quandle, $G$ be a group and $\theta\in Z^2(Q,G)$. If $[\theta]_\sim=[\textbf{1}]_\sim\in H^2(Q,\sym_G)$ then $[\theta]_\sim=[\textbf{1}]_\sim\in H^2(Q,G)$.
\end{corollary}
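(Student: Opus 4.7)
The plan is to reduce to the explicit trivialization constructed inside the proof of Proposition \ref{trivial for connected} and then to observe that this trivialization automatically lands inside $G$ rather than merely $\sym_G$. Write $\iota: G\hookrightarrow \sym_G$ for the left regular representation; by assumption we are identifying each $\theta_{x,y}\in G$ with $\iota(\theta_{x,y})\in \sym_G$, and the hypothesis says that in $Z^2(Q,\sym_G)$ there exists $\gamma:Q\longrightarrow \sym_G$ with $\theta_{x,y}=\gamma_{xy}\gamma_y^{-1}$. I want to upgrade this to a map $\gamma':Q\longrightarrow G$ with $\theta_{x,y}=\gamma'_{xy}(\gamma'_y)^{-1}$ in $G$, which by definition gives $[\theta]_\sim=[\textbf{1}]_\sim$ in $H^2(Q,G)$.

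First I would form $E=Q\times_\theta G$ using the embedding $\iota$ and invoke Proposition \ref{trivial for connected}: from $[\theta]_\sim=[\textbf{1}]_\sim$ in $H^2(Q,\sym_G)$ we obtain $\ker{p}\wedge \mathcal{O}_{\lmlt(E)}=0_E$. Then I would reproduce the construction in the (ii) $\Rightarrow$ (i) part of that proposition's proof: pick $x_0\in Q$ and, for every $y=h(x_0)\in Q$, set
\[
\gamma(y)=\Theta_{h(x_0)},
\]
where $\Theta$ is the formal word defined in Lemma \ref{l:Theta}. Well-definedness of $\gamma$ on $Q$ and the identity $\theta_{x,y}=\gamma(xy)\gamma(y)^{-1}$ in $\sym_G$ are already provided by that proof.

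The key extra observation, which constitutes the whole content beyond Proposition \ref{trivial for connected}, is that each $\gamma(y)$ in fact lies in $\iota(G)$. Indeed the recursive definition of $\Theta_{h(x_0)}$ expresses it as a finite composition of the elements $\theta_{u,v}^{\pm 1}$, each of which lies in $\iota(G)$ by construction of $E$; since $\iota$ is an injective group homomorphism, $\iota(G)$ is a subgroup of $\sym_G$ and is closed under taking products and inverses. Consequently there is a unique map $\gamma':Q\longrightarrow G$ with $\iota\circ \gamma'=\gamma$.

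Finally, applying $\iota^{-1}$ to both sides of $\theta_{x,y}=\gamma(xy)\gamma(y)^{-1}$, which is legitimate because $\iota$ is injective and a homomorphism, yields $\theta_{x,y}=\gamma'(xy)\gamma'(y)^{-1}$ in $G$, so $[\theta]_\sim=[\textbf{1}]_\sim$ in $H^2(Q,G)$. The only real issue is verifying that every $\Theta_{h(x_0)}$ stays inside $\iota(G)$, which is essentially bookkeeping on the recursive definition; there is no further obstacle since connectedness of $Q$ and the well-definedness of $\gamma$ are already handled by Proposition \ref{trivial for connected}.
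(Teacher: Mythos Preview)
Your proposal is correct and follows essentially the same approach as the paper: both reuse the explicit trivialization $\gamma(y)=\Theta_{h(x_0)}$ from the proof of Proposition~\ref{trivial for connected} and observe that, since $\Theta$ is a word in the $\theta_{u,v}^{\pm 1}$, it automatically takes values in $G$ rather than just $\sym_G$. Your write-up is simply more explicit about the embedding $\iota$ and the closure argument, but the idea is identical.
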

\begin{proof}
    Assume that $[\theta]_\sim=[\textbf{1}]_\sim\in H^2(Q,\sym_G)$. As in the proof of Proposition \ref{trivial for connected}, the map $\gamma(y)=\Theta_{h(x_0)}$ provided that $h(x_0)=y$ is a well defined map from $Q$ to $G$. And $\gamma(xy)\gamma(y)^{-1}=\theta_{x,y}$. Hence $[\theta]_\sim=[\textbf{1}]_\sim\in H^2(Q,G)$. 
\end{proof}


Let $Q$ be a latin quandle, $u\in Q$ and $\theta$ be a constant cocycle with values in $G$. We say that $\theta$ is {\it $u$-normalized} if $\theta_{x,u}=1$ for every $x\in Q$. According to \cite{MeAndPetr}, given a $\theta\in Z^2(Q,G)$, setting $\gamma_x=\theta_{x/u,u}^{-1}$, then the cocycle
\begin{align}\label{u norm}
\theta'_{x,y}=\theta_{(xy)/u,u}^{-1}\theta_{x,y}\theta_{y/u,u}    
\end{align}
is $u$-normalized. So, for latin quandles we can focus on normalized cocycles with respect to the computation of the classes of constant quandle cocycles.

\begin{proposition} \cite[Proposition 3.2, Corollary 3.3]{MeAndPetr}\label{normalized cocycles}
 Let $Q$ be a latin quandle, $u\in Q$ and $\theta\in Z^2(Q,S)$. Then $\theta$ is cohomologous to a $u$-normalized constant cocycle. Moreover, if $\theta$ is $u$-normalized and $\theta\sim\textbf{1}$ then $\theta=\textbf{1}.$   
\end{proposition}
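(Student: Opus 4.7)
The plan is to establish both halves by direct computation, exploiting that $R_u$ is a bijection (since $Q$ is latin) and that $u*u=u$ (idempotence of the quandle operation).

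For the first statement, I would set $\gamma_x=\theta_{x/u,u}^{-1}$ and put $\theta'_{x,y}=\gamma_{xy}\theta_{x,y}\gamma_y^{-1}$, which is precisely the formula displayed in \eqref{u norm}. By construction $\theta'\sim\theta$, so $\theta'$ automatically lies in $Z^2(Q,S)$; no separate verification of \eqref{CC} or \eqref{quandle cocycle} for $\theta'$ is needed, since cohomologous cocycles are cocycles. It then remains to check $\theta'_{x,u}=1$ for every $x\in Q$. Because $R_u$ is a permutation one has $(xu)/u=x$, and because $u*u=u$ one has $u/u=u$; thus \eqref{u norm} evaluated at $(x,u)$ reduces to $\theta_{x,u}^{-1}\theta_{x,u}\theta_{u,u}=\theta_{u,u}$, which is $1$ by \eqref{quandle cocycle}.

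For the second statement, assume $\theta$ is $u$-normalized and $\theta\sim\textbf{1}$, so there is $\gamma:Q\longrightarrow\sym_S$ with $\theta_{x,y}=\gamma_{xy}\gamma_y^{-1}$. Specialising $y=u$ and using $\theta_{x,u}=1$ yields $\gamma_{xu}=\gamma_u$ for every $x\in Q$. Since $R_u$ is bijective, the map $x\mapsto xu$ is surjective onto $Q$, so $\gamma$ is constant on $Q$. Substituting a constant $\gamma$ back into the relation forces $\theta_{x,y}=1$ for every $x,y\in Q$, i.e.\ $\theta=\textbf{1}$.

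The argument is a short verification in each half, so I do not anticipate any real obstacle. The only place one has to be a little careful is confirming that $\gamma_x=\theta_{x/u,u}^{-1}$ is well defined, which it is because latinity provides an honest inverse for $R_u$, and recognising that idempotence is what forces $u/u=u$ (so that the cancellation at $(x,u)$ actually goes through).
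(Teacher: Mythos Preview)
Your argument is correct, and for the first half it is exactly the construction the paper itself records in \eqref{u norm} (the paper does not prove the proposition in full, citing \cite{MeAndPetr} instead). Your verification that $\theta'_{x,u}=1$ via $(xu)/u=x$ and $u/u=u$, and your proof of the second half by showing $\gamma$ is constant through surjectivity of $R_u$, are both sound and are the standard arguments.
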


\subsection{Abelian cocycles}
Let $A$ be an abelian group. The constant cocycle with values in $A$ are called {\it abelian cocycles} and the cover $E=Q\times_\theta A$ is called {\it abelian cover}. 
%
%
%
%

Note that quandles constructed with abelian cocycles are examples of {\it central extensions} as defined in \cite{CP} (already defined in \cite{ab_ext} under the name of abelian extensions). As a corollary of \cite[Lemma 3.5]{Galois} we have the following.

\begin{lemma}\label{cong from subs}
Let $Q$ be a quandle, $A$ an abelian group, $\theta\in Z^2(Q,A)$ and $N\leq A$. The equivalence relation defined by setting
$$(x,a)\,\sim_N\,(y,b)\, \text{ if and only if }x=y \text{ and } b-a\in N$$
is a congruence of $E=Q\times_\theta A$ and $\sim_N\leq \ker{p}$.
\end{lemma}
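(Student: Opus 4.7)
The plan is to verify directly, from the explicit formulas for the operations of $E=Q\times_\theta A$, that $\sim_N$ respects $*$ and $\ldiv$, and then to note that the containment $\sim_N\leq \ker{p}$ is immediate from the very definition of $\sim_N$.

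First, I would unpack how $\theta\in Z^2(Q,A)$ acts when viewed as a cocycle with values in $\sym_A$ via the regular left action of $A$ on itself. Since $A$ is abelian (written additively), $\theta_{x,y}(b)=\theta_{x,y}+b$, so the operations on $E$ are
\[(x,a)*(y,b)=(x*y,\,\theta_{x,y}+b),\qquad (x,a)\ldiv(y,b)=(x\ldiv y,\,b-\theta_{x,x\ldiv y}),\]
where the second formula is forced by $(x,a)*((x,a)\ldiv(y,b))=(y,b)$.

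Next, I would check the congruence condition. The relation $\sim_N$ is clearly an equivalence relation because $N$ is a subgroup of $A$. Suppose $(x,a)\sim_N(x,a')$ and $(y,b)\sim_N(y,b')$, i.e.\ $a'-a,\,b'-b\in N$. Then on the one hand
\[(x,a')*(y,b')-(x,a)*(y,b)=(x*y,\,\theta_{x,y}+b')-(x*y,\,\theta_{x,y}+b),\]
so the first coordinates agree and the second coordinates differ by $b'-b\in N$; on the other hand the same computation for $\ldiv$ gives first coordinate $x\ldiv y$ in both cases and second-coordinate difference again equal to $b'-b\in N$. Hence $\sim_N$ is compatible with both basic operations.

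Finally, the inclusion $\sim_N\leq\ker{p}$ is built into the definition: $(x,a)\sim_N(y,b)$ already requires $x=y$, so $p(x,a)=x=y=p(y,b)$. The only mild point worth flagging is making sure one uses the correct formula for $\ldiv$ and the abelian-additive identification of $\theta_{x,y}\in A$ with left translation on $A$; once this is in place, no real obstacle remains and the verification is purely formal.
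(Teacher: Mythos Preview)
Your argument is correct: the direct check that $\sim_N$ is compatible with $*$ and $\ldiv$ goes through exactly as you wrote, once the left action of $A$ on itself is used to write $\theta_{x,y}(b)=\theta_{x,y}+b$, and the containment in $\ker p$ is trivial from the definition.

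The paper does not actually argue this out; it simply records the lemma as a corollary of \cite[Lemma 3.5]{Galois}. So your approach is different in that it is a self-contained elementary verification rather than an appeal to a general result on central extensions. What the citation buys is brevity and a conceptual placement (the relation $\sim_N$ is an instance of the congruences below the kernel in a central extension); what your computation buys is that the reader does not need to consult an external source for a two-line check.
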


For connected abelian covers, the congruences below the kernel of the canonical projection coincide with the congruences defined in Lemma \ref{cong from subs}.
\begin{proposition}\label{cong from subs for connected}
Let $Q$ be a quandle, $A$ an abelian group, $\theta\in Z^2(Q,A)$ and $x\in Q$. If $E=Q\times_{\theta} A$ is connected then the map
\begin{align*}
 \psi: \setof{N}{N\leq A} \longrightarrow \setof{\alpha\in Con(E)}{\alpha\leq \ker{p}}, \quad
N\mapsto    \sim_N
\end{align*}
is a bijection.
\end{proposition}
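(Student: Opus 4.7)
The plan is to verify injectivity and surjectivity of $\psi$ separately. Injectivity is immediate: if $\sim_N\, =\, \sim_{N'}$, then fixing any $x\in Q$ one recovers
\[ N = \{b\in A : (x,0)\,\sim_N\,(x,b)\} = \{b\in A : (x,0)\,\sim_{N'}\,(x,b)\} = N'. \]

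For surjectivity, the key preliminary observation is that when $A$ is abelian and acts on itself by left translation, Lemma \ref{l:Theta} specializes as follows: for every $h\in\lmlt(E)$ and $v\in Q$ there is $t_{h,v}\in A$ (a product of $\theta$-values) such that
\[ h(v,b) = \bigl(\pi_{\ker{p}}(h)(v),\; b+t_{h,v}\bigr)\qquad \text{for all } b\in A. \]
Thus the action of $\lmlt(E)$ on each fiber of $p$ is by translation; in particular, if $h(x,0)=(y,c)$ then $h(x,a)=(y,a+c)$ for every $a\in A$. Given $\alpha\le \ker{p}$ I would set
\[ N_x = \{b\in A : (x,0)\,\alpha\,(x,b)\}\]
and carry out three steps. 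First, $N_x\le A$: for each $b\in A$, connectedness of $E$ yields $h_b\in\lmlt(E)$ with $h_b(x,0)=(x,b)$, whence $h_b(x,a)=(x,a+b)$; applying $h_{b_2}$ to $(x,0)\,\alpha\,(x,b_1)$ and chaining yields $b_1+b_2\in N_x$, and applying $h_{-b}$ to $(x,0)\,\alpha\,(x,b)$ yields $-b\in N_x$. Second, $N_x$ does not depend on $x$: for any $y\in Q$, connectedness gives $g\in\lmlt(E)$ with $g(x,0)=(y,0)$, forcing $t_{g,x}=0$ and $g(x,b)=(y,b)$ for every $b\in A$; then $\alpha$-compatibility transports $N_x$ into $N_y$, and symmetry gives $N_x = N_y =: N$. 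Third, $\alpha\, =\, \sim_N$: if $(x,a)\,\alpha\,(x',a')$ then $x=x'$ (since $\alpha\le \ker{p}$), and choosing $h\in\lmlt(E)$ with $h(x,a)=(x,0)$ gives $h(x,a')=(x,a'-a)$, so $a'-a\in N$; the reverse inclusion is obtained by translating $(x,0)\,\alpha\,(x,b-a)$ via an $h$ with $h(x,0)=(x,a)$.

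The main obstacle is Step 1, where the subgroup closure of $N_x$ relies essentially on two ingredients working together: that $A$ being abelian forces the $\Theta$-values appearing in Lemma \ref{l:Theta} to act as genuine translations on each fiber, and that $E$ being connected supplies enough such translating automorphisms to realize every element of $A$ as a displacement on the fiber through $x$. Without abelianness of $A$ the translation description fails and $N_x$ need not be closed under the group operation of $A$; without connectedness we only control $\lmlt(E)$-orbits, not entire fibers.
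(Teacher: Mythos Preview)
Your proof is correct and rests on the same idea as the paper's: recover the subgroup $N$ as $\{b\in A:(x,0)\,\alpha\,(x,b)\}$ via the fact that $\lmlt(E)$ acts on each fiber of $p$ by translations (Lemma~\ref{l:Theta} with $A$ abelian). The difference is in the packaging. The paper identifies $N$ structurally as the translation group $T_\alpha$ arising from the block-stabilizer $\dis(E)_{[(x,0)]_\alpha}$ restricted to the fiber $\{x\}\times A$; it then quotes \cite[Proposition~1.3]{GB} to get $[(x,0)]_\alpha=(x,0)^{H_\alpha}=(x,T_\alpha)$ and invokes congruence regularity of connected quandles \cite[Lemma~2.3]{Maltsev_paper} to pass from one block to the whole congruence. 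You instead carry out both of these steps by hand: Step~1 proves closure of $N_x$ directly by producing, via connectedness, elements of $\lmlt(E)$ realizing arbitrary translations on the fiber, and Step~2 replaces the appeal to congruence regularity by an explicit transport argument showing $N_x=N_y$. Your version is thus more elementary and self-contained, at the cost of slightly more manual work; the paper's version is shorter but relies on cited machinery.
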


\begin{proof}
Clearly if $\sim_N=\sim_M$ then $N=M$. So the map $\psi$ is injective. On the other hand, consider $\alpha\leq \ker{p}$. Then $\dis(E)_{[(x,0)]_\alpha}\leq \dis(E)_{[(x,0)]_{\ker{p}}}$ and so $\dis(E)_{[(x,0)]_\alpha}$ maps $=[(x,0)]_{\ker{p}}=\{x\}\times A$ to itself. Let us denote by $H_\alpha$ the subgroup $\dis(E)_{[(x,0)]_\alpha}$. Note that if $h\in H_\alpha$ then $h(x,s)=(x,s+t_h)$ for some $t_h\in A$, so we can identify $H_\alpha|_{\{x\}\times A}$ with a subgroup of $A$ acting by translations denoted by $T_\alpha$. According to \cite[Proposition 1.3]{GB} we have $[(x,0)]_\alpha=(x,0)^{H_\alpha}=(x,T_\alpha)=[(x,0)]_{\sim_{T_\alpha}}$. The quandle $E$ is connected and so congruence regular \cite[Lemma 2.3]{Maltsev_paper}. Thus $\alpha=\sim_{T_\alpha}.$ So the map $\psi$ is also onto.
%
%
\end{proof}

 Proposition \ref{cong from subs for connected} does not hold for non-connected covers. Let $Q$ be a quandle, $A$ an abelian group and $E=Q\times_{\textbf{0}} A$ where $\textbf{0}_{x,y}=0$ for every $x,y\in Q$. Then every partition of $A$ provides a congruence, so congruences below $\ker{p}$ are not in one-to-one correspondence with subgroups of $A$.

\begin{lemma}\label{A is generated}
    Let $Q$ be a quandle, $A$ an abelian group and $\theta\in Z^2(Q,A)$. If $E=Q\times_\theta A$ is connected then $A=\langle \theta_{x,y},\, x,y   \in Q\rangle$.
\end{lemma}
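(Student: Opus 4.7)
Let $B=\langle \theta_{x,y}\,:\,x,y\in Q\rangle \leq A$. The plan is to show that the quotient of $E$ by the subgroup $B$ (in the sense of Lemma \ref{cong from subs}) is isomorphic to a trivial cover of $Q$, and then invoke Example \ref{ex trivial cov} to force $A/B$ to be trivial.

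First I would apply Lemma \ref{cong from subs} to get the congruence $\sim_B$ on $E=Q\times_\theta A$, which lies below $\ker p$. Consider the quotient $E/{\sim_B}$. A straightforward computation using the definition \eqref{Q x S} of the multiplication in $E$ shows that
\[[(x,a)]_{\sim_B}*[(y,b)]_{\sim_B}=[(xy,\theta_{x,y}+b)]_{\sim_B}=[(xy,b)]_{\sim_B},\]
since $\theta_{x,y}\in B$ by construction. Identifying the $\sim_B$-class of $(x,a)$ with the pair $(x,a+B)\in Q\times(A/B)$, this display exhibits an isomorphism $E/{\sim_B}\cong Q\times_{\mathbf{1}}(A/B)$, i.e.\ a trivial cover of $Q$ over the set $A/B$.

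Now since $E$ is connected by assumption and homomorphic images of connected quandles are connected (as used in the proof of Proposition \ref{connected by gamma}), the quotient $E/{\sim_B}$ is also connected. By Example \ref{ex trivial cov}, a trivial cover $Q\times_{\mathbf{1}} S$ is connected only when $|S|=1$. Applying this to $S=A/B$ yields $|A/B|=1$, so $A=B=\langle \theta_{x,y}\,:\,x,y\in Q\rangle$, as desired.

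There is no real obstacle here: the only point that requires a moment of care is checking that quotienting by $\sim_B$ kills precisely the cocycle (so the induced cocycle on $A/B$ is the zero map), which is immediate because the generators of $B$ are by definition the values of $\theta$. Everything else reduces to invocations of results already stated in the paper.
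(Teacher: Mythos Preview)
Your proof is correct. It differs from the paper's argument only in packaging: the paper simply observes that the $\lmlt(E)$-orbit of $(x,0)$ is contained in $Q\times H$ (where $H=\langle\theta_{x,y}\rangle$), because each application of $L_{(y,a)}^{\pm1}$ adds an element of $H$ to the second coordinate; connectedness of $E$ then forces $Q\times A\subseteq Q\times H$, hence $A=H$. Your route---quotienting by $\sim_B$ to obtain a trivial cover and invoking Example~\ref{ex trivial cov}---is the same observation rephrased through the congruence machinery of Lemma~\ref{cong from subs}. The direct orbit argument is shorter, but yours makes the connection with trivial covers explicit, which is a nice conceptual bonus.
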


\begin{proof}
Let $H=\langle \theta_{x,y},\, x,y   \in Q\rangle$. Note that $(x,0)^{\lmlt(Q)}\subseteq x^{\lmlt(Q)}\times H$. Since $E=(x,0)^{\lmlt(E)}$ then $H=A$.
\end{proof}

\begin{proposition}\label{size of abelian covers}
    Let $Q$ be a finite connected quandle, $A$ abelian group and $\theta\in Z^2(Q,A)$. If $E=Q\times_\theta A$ is not trivial then $Q$ has a connected cover of size $|Q|p$ for some prime $p$.
\end{proposition}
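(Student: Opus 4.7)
The plan is to construct the required cover as a congruence-quotient of a suitable $\lmlt(E)$-orbit inside $E$, by identifying the fibre over a basepoint as a finitely generated abelian subgroup of $A$ and then collapsing along a prime-index subgroup of that fibre.

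First I would fix $x_0 \in Q$ and set $E' = (x_0, 0)^{\lmlt(E)}$. Since $(x, a) * (y, b) = (xy, \theta_{x, y}(b))$ shows that $L_{(x, a)}$ depends only on $x$, we have $\ker{p} \leq \lambda_E$, so $E$ is a cover of $Q$. Lemma \ref{orbits are connected} then gives that $E'$ is connected, and Corollary \ref{orbits are onto} that $p|_{E'}$ maps onto $Q$.

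Next I would define $T = \setof{a \in A}{(x_0, a) \in E'}$ and show that $T$ is a finitely generated subgroup of $A$ with each fibre of $p|_{E'}$ being a coset of $T$. For closure, given $a, b \in T$ pick $g, h \in \lmlt(E)$ with $g(x_0, 0) = (x_0, a)$ and $h(x_0, 0) = (x_0, b)$; writing $A$ additively and applying Lemma \ref{l:Theta}, $g$ acts on $\{x_0\} \times A$ by $(x_0, c) \mapsto (x_0, c+a)$, whence $gh(x_0, 0) = (x_0, a+b) \in E'$ and $g^{-1}(x_0, 0) = (x_0, -a) \in E'$. For the coset claim, for $(y, c_1), (y, c_2) \in E'$ with $g_i(x_0, 0) = (y, c_i)$, the element $g_1^{-1} g_2$ takes $(x_0, 0)$ to $(x_0, c_2 - c_1)$, so $c_2 - c_1 \in T$. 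Since $T$ sits inside the finitely generated subgroup $\langle \theta_{x, y} : x, y \in Q \rangle$, it is itself finitely generated, and $|E'| = |T| \cdot |Q|$.

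Then I would argue $|T| > 1$. The translation $\varphi_d \colon (z, c) \mapsto (z, c + d)$ is an automorphism of $E$ for every $d \in A$ (direct verification using the $*$ formula). Since $p|_{E'}$ is surjective, for any $(y, d) \in E$ I can pick $(y, c) \in E'$ and observe $\varphi_{d-c}$ maps $E'$ bijectively onto the orbit of $(y, d)$. Hence every $\lmlt(E)$-orbit has common size $|E'|$. Since $E$ is not trivial, Corollary \ref{trivial for connected finite} supplies an orbit of size $> |Q|$, forcing $|E'| > |Q|$ and $|T| > 1$.

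Finally, since every nontrivial finitely generated abelian group contains a subgroup of prime index, pick $B \leq T$ with $[T : B] = p$ prime, and declare $(y, a) \sim (y, b)$ iff $a - b \in B$; this is well defined on $E'$ because $a - b \in T \supseteq B$. The formula $(y, a) * (z, c) = (yz, c + \theta_{y, z})$, whose second coordinate is independent of $a$, makes $\sim$ a congruence of $E'$ with blocks of size $|B|$, contained in $\ker{p|_{E'}}$. The quotient $E'/{\sim}$ is the required cover: connected as a homomorphic image of $E'$, of cardinality $|E'|/|B| = p|Q|$, and the induced map $E'/{\sim} \to Q$ is a covering map because $L_{[(y, a)]_\sim}$ depends only on $y$. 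The main obstacle is the subgroup/coset analysis of $T$ through careful bookkeeping with Lemma \ref{l:Theta}; everything after that is a routine congruence verification or a direct appeal to results already in Sections \ref{preliminary} and \ref{cohomology}.
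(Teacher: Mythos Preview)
Your proof is correct and follows the same route as the paper's: pass to the orbit $E'=(x_0,0)^{\lmlt(E)}$, identify it as an abelian cover of $Q$ whose fibre is a nontrivial finitely generated abelian group, then quotient by a prime-index subgroup of that fibre. You in fact justify more carefully than the paper does why the particular orbit $E'$ has size $>|Q|$ (via the translation automorphisms $\varphi_d$) and why its fibres are cosets of $T$; the only cosmetic slip is the expression $|E'|/|B|$, which should be phrased as $|E'/{\sim}|=|Q|\cdot[T:B]=p|Q|$ so as to remain valid when $T$ (and hence $B$) is infinite.
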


\begin{proof}
    If $E$ is not trivial we can consider $E'=(x,0)^{\lmlt(E)}$ that is connected and $|E|>|Q|$ (see Corollary \ref{trivial for connected finite}). So $E'\cong Q\times_{\theta} A'$. The group $A'$ is generated by the set $\setof{\theta_{x,y}}{x,y\in Q}$ according to Lemma \ref{A is generated} and so it is finitely generated. In particular $A\cong \Z^k\times B$ for some $k\in \mathbb{N}$ and some finite abelian group $B$. Every subgroup of $A$ provides a congruence as defined in Lemma \ref{cong from subs}. If $k\neq 0$, let $N=\mathbb{Z}^{k-1}\times p\Z\times B$. Then $E''=E'/\sim N$ is a finite connected cover of $Q$. If $k=0$ consider $E''=E'/N$ where $N$ is a subgroup of $A'$ of prime index $p$. In both cases $|E''|=|Q|p$. 
\end{proof}

Abelian cocycles have been studied in Section 7 of \cite{covering_paper} and the cases when a cover can be constructed using abelian cocycles have been understood. In particular, we proved that connected covers of principal quandles are principal and they can be realized using abelian covers.

\begin{theorem}\label{cover of principal}\cite[Theorem 7.5]{covering_paper}
    Let $G$ be a group, $Q=\mathcal{Q}(G,f)$ be a connected quandle and $E$ be a connected cover of $Q$. Then $E$ is principal and $E$ is an abelian cover of $Q$.
\end{theorem}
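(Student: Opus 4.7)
The plan is to first establish that $E$ is principal, and then to use the principality together with the central extension $A\hookrightarrow \dis(E)\twoheadrightarrow \dis(Q)$ induced by $p$ to manufacture an explicit abelian cocycle. Throughout set $\alpha=\ker{p}$, so $\alpha\leq \lambda_E$ by the definition of a covering, and let $A=\dis^\alpha$. The technical cornerstone is that $A$ is central in $\lmlt(E)$: indeed, $A=\bigcap_{y\in E}\dis(E)_{[y]_\alpha}$, so every $h\in A$ stabilises each block $[y]_\alpha$ setwise and therefore $h(y)\,\alpha\,y$ for every $y\in E$; since $\alpha\leq \lambda_E$ this yields $L_{h(y)}=L_y$, and the standard rack identity $g L_y g^{-1}=L_{g(y)}$ (valid for $g\in \lmlt(E)\leq \aut{E}$) then forces $h L_y h^{-1}=L_y$ for every $y\in E$. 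Hence $h$ commutes with every generator of $\lmlt(E)$, so $A$ is central in $\lmlt(E)$; in particular it is abelian, central in $\dis(E)$, and fixed pointwise by $\widehat{L_x}$ for every $x\in E$.

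To show that $E$ is principal, fix $x\in E$ and let $h\in \dis(E)_x$. Since $Q=\mathcal{Q}(G,f)$ is principal, a short direct computation shows that every element $L_gL_{g'}^{-1}$ of $\dis(Q)$ acts on $G$ as a left translation, so $\dis(Q)$ sits inside the regular action of $G$ on itself; in particular $\dis(Q)_{p(x)}=1$. The epimorphism $\pi_\alpha\colon \dis(E)\to \dis(Q)$ has kernel $A$ and maps $\dis(E)_x$ into $\dis(Q)_{p(x)}=1$, so $h\in A$, and by the previous paragraph $h$ is central in $\lmlt(E)$. Connectedness of $E$ gives every $y\in E$ in the form $g(x)$ with $g\in \lmlt(E)$, whence $h(y)=gh(x)=g(x)=y$, forcing $h=1$. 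Thus $\dis(E)_x=1$ and Proposition~\ref{coset reps} identifies $E$ with the principal quandle $\mathcal{Q}(\dis(E),\widehat{L_x})$.

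For the abelian cover realisation, choose a set-theoretic section $s\colon \dis(Q)\to \dis(E)$ of $\pi_\alpha$ with $s(1)=1$, and use the bijection $\dis(Q)\times A\to \dis(E)$, $(q,a)\mapsto s(q)a$, to transport the quandle operation of $E=\mathcal{Q}(\dis(E),\widehat{L_x})$ onto $Q\times A$, where we identify $Q$ with $\mathcal{Q}(\dis(Q),\widetilde f)$ for $\widetilde f=\widehat{L_{p(x)}}$. Expanding
\[
(s(q_1)a_1)*(s(q_2)a_2)=s(q_1)\,a_1\,\widehat{L_x}\left(a_1^{-1}s(q_1)^{-1}s(q_2)a_2\right)
\]
and using that $A$ is central in $\dis(E)$ with $\widehat{L_x}$ acting as the identity on $A$, the $a_1$-contributions cancel and the product acquires the shape $(q_1,a_1)*(q_2,a_2)=(q_1*q_2,\, a_2+\theta_{q_1,q_2})$ for a function $\theta\colon Q\times Q\to A$ defined from $s$ and the group $2$-cocycle of the central extension $A\to \dis(E)\to \dis(Q)$. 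The identity \eqref{CC} then follows from associativity in $\dis(E)$ together with $\widehat{L_x}$ being a group homomorphism, and \eqref{quandle cocycle} reflects idempotence in $E$; hence $E\cong Q\times_\theta A$ is an abelian cover of $Q$. The main obstacle is isolating the centrality of $\dis^\alpha$ in $\lmlt(E)$; once this is secured, the principality of $E$ is a one-line connectedness argument and the abelian cover realisation reduces to routine central-extension bookkeeping.
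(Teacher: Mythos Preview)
The paper does not supply a proof of this theorem: it is quoted verbatim from \cite[Theorem~7.5]{covering_paper} and used as a black box throughout. So there is no in-paper argument to compare your proposal against.

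That said, your argument is correct and essentially reconstructs the intended proof. The key step---showing that $\dis^\alpha$ is central in $\lmlt(E)$ whenever $\alpha\leq\lambda_E$---is exactly the content of \cite[Corollary~3.3]{covering_paper}, which the present paper also invokes later (in the description of the algorithm in Section~\ref{simply}). From there your deduction that $\dis(E)_x=1$ is clean: since $Q=\mathcal{Q}(G,f)$ is principal, $\dis(Q)$ embeds into the regular representation of $G$, so $\dis(Q)_{p(x)}=1$; hence $\dis(E)_x\subseteq\ker\pi_\alpha=\dis^\alpha$ is central and, by transitivity of $\lmlt(E)$, trivial. The abelian-cover realisation via a set-theoretic section of the central extension $1\to A\to\dis(E)\to\dis(Q)\to1$ is standard central-extension bookkeeping, and the cocycle conditions \eqref{CC} and \eqref{quandle cocycle} follow immediately from the fact that the transported structure on $Q\times A$ is a quandle isomorphic to $E$. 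One small stylistic remark: rather than saying that \eqref{CC} ``follows from associativity in $\dis(E)$'', it is cleaner to simply observe that $Q\times_\theta A\cong E$ is a quandle, which by the discussion preceding \eqref{CC} is equivalent to $\theta$ satisfying \eqref{CC} and \eqref{quandle cocycle}.
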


\section{Simply connected quandles}\label{simply}
\subsection{Characterizations of simply connected quandles}

A quandle $Q$ is {\it simply connected} if it is connected and the covers of $Q$ are equivalent to $Q\times_{\textbf{1}} S$  for some set $S$ (in the sense of \cite{Eisermann}). Equivalently $H^2(Q,S)={\textbf{1}}$ for every set $S$ (see Proposition 2.16 in \cite{MeAndPetr}. A characterization of simply connected quandles have been obtained in \cite{Eisermann} and in \cite{covering_paper}. In particular, the first paper make use of the \emph{adjoint group} of a quandle defined as the group with presentation
$$\Adj(Q)=\langle e_x:\,x\in Q  \ |\ e_x e_y e_x^{-1}=e_{x*y}:\, x,y\in Q\rangle.$$ 
The same group is also known as the {\it enveloping group} of $Q$ and {\it structure group} of $Q$ \cite{ESS, QuadraticNichols}. The assignment $Q\mapsto \Adj(Q)$ defines the adjoint functor of the functor $\Conj(-)$.

According to \cite{Eisermann}, there exists a group homomorphism $\overline{L_Q}: \Adj(Q)\to \lmlt(Q)$ such that the following diagram is commutative
\begin{equation*}
\xymatrixcolsep{63pt}\xymatrixrowsep{30pt}\xymatrix{ Q\ar[dr]^{L_Q}
\ar[r]^{\iota} & \Adj(Q)\ar[d]^{\overline{L_Q}} \\ & \lmlt(Q) },\qquad \xymatrixcolsep{63pt}\xymatrixrowsep{30pt}\xymatrix{ x\ar[dr]
\ar[r] & e_x\ar[d] \\ & L_x }.  \label{Diag:Adjoint}
\end{equation*}

    \begin{theorem}\cite[Theorem 4.3]{covering_paper}\label{general simply}
        Let $Q$ be a quandle and $$\Adj^0(Q)=\setof{e_{x_{i_1}}^{k_1}\cdots e_{x_{i_r}}^{k_r}}{\sum_{j=1}^r k_j=0,\, r\in \mathbb{N}, x_{i_l}\in Q }\leq \Adj(Q).$$ The following are equivalent:
        \begin{itemize}
            \item[(i)] $Q$ is simply connected.
            \item[(ii)] $Q$ is principal and $\overline{L_Q}:\Adj^0(Q)\longrightarrow \dis(Q)$ is an isomorphism.
        \end{itemize}
    \end{theorem}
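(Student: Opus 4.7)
The plan is to interpret both directions through a universal cover built from $\Adj^0(Q)$. The starting observation is that $\overline{L_Q}$ always maps $\Adj^0(Q)$ onto $\dis(Q)$: the generators $L_xL_y^{-1}$ of $\dis(Q)$ lift to $e_xe_y^{-1}\in\Adj^0(Q)$. Hence the genuine content of (ii) is injectivity of $\overline{L_Q}|_{\Adj^0(Q)}$ together with principality of $Q$.

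For (i)$\Rightarrow$(ii), fix a basepoint $x_0\in Q$ and consider the principal quandle $\widetilde{Q}=\mathcal{Q}(\Adj^0(Q),\widehat{e_{x_0}})$; this is well defined because conjugation by $e_{x_0}$ preserves the augmentation and hence stabilizes $\Adj^0(Q)$. The map $\pi\colon\widetilde{Q}\to Q$ given by $g\mapsto\overline{L_Q}(g)(x_0)$ is a quandle morphism, since $ge_{x_0}g^{-1}$ projects via $\overline{L_Q}$ to $L_{\overline{L_Q}(g)(x_0)}$. It is a covering map: elements $g^{-1}h$ with $\pi(g)=\pi(h)$ project into $\dis(Q)_{x_0}$, whose members commute with $L_{x_0}$ in $\lmlt(Q)$, so $[g^{-1}h,e_{x_0}]\in\ker\overline{L_Q}$ places $g^{-1}h$ in $\mathrm{Fix}(\widehat{e_{x_0}})$ modulo that kernel, whence $L_g=L_h$ in $\widetilde{Q}$. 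Restricting $\pi$ to the $\lmlt(\widetilde{Q})$-orbit $\widetilde{Q}_0$ of the identity yields a connected cover of $Q$. Simply connectedness forces this cover to be trivial, and connectedness collapses the fibers (see Example~\ref{ex trivial cov}), producing $\widetilde{Q}_0\cong Q$; this shows simultaneously that $Q$ is principal and that $\overline{L_Q}|_{\Adj^0(Q)}$ is injective.

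For (ii)$\Rightarrow$(i), let $p\colon E\to Q$ be a connected cover. By Theorem~\ref{cover of principal}, $E$ is principal and $E\cong Q\times_\theta A$ for some abelian $A=\langle\theta_{x,y}\rangle$ (Lemma~\ref{A is generated}). The morphism of adjoint groups $p_*\colon\Adj^0(E)\to\Adj^0(Q)$, $e_{(x,a)}\mapsto e_x$, sits in a commutative square with $\overline{L_E}$ and $\overline{L_Q}$; the hypothesis that $\overline{L_Q}|_{\Adj^0(Q)}$ is an isomorphism onto $\dis(Q)$ lets one interpret $\theta$ as the data of a central extension $1\to A\to\dis(E)\to\dis(Q)\to 1$ and pull this extension back through the iso into $\Adj^0(Q)$. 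Using the universal property of the adjoint group the pulled-back extension must split, yielding a map $\gamma\colon Q\to A$ for which $\theta_{x,y}=\gamma(x*y)-\gamma(y)$; in other words $\theta\sim\mathbf{1}$, and by Proposition~\ref{trivial for connected} the cover is trivial.

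The main obstacle is the bridge between quandle cohomology and the group cohomology of $\Adj^0(Q)$. In the forward direction, the careful handling of orbits in $\widetilde{Q}$—verifying both connectedness of $\widetilde{Q}_0$ and the covering condition $\ker\pi\leq\lambda_{\widetilde{Q}}$ with no appeal to (ii) itself—is technical but routine. In the reverse direction, the identification of $Z^2(Q,A)/{\sim}$ with central extensions of $\dis(Q)$ by $A$, transported via the isomorphism in (ii), is the conceptual heart of the argument and the place where the universal property of $\Adj^0(Q)$ truly pays off; this is where I expect most of the effort to go.
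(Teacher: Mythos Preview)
This theorem is not proved in the paper; it is quoted from \cite[Theorem 4.3]{covering_paper} and used as a black box. There is no in-paper argument to compare your proposal against.

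On your (i)$\Rightarrow$(ii): the outline is right, but the covering verification is both simpler than you make it and mis-argued as written. In $\Adj(Q)$ one has the identity $se_{x_0}s^{-1}=e_{s\cdot x_0}$ for any $s$, so if $\overline{L_Q}(s)$ fixes $x_0$ then $s$ centralises $e_{x_0}$ outright---your ``modulo $\ker\overline{L_Q}$'' phrasing is a detour that does not yield $L_g=L_h$. More seriously, you never check that $\widetilde Q$ is connected, i.e.\ that $\widetilde Q_0=\widetilde Q$. Without this, $\widetilde Q_0\cong Q$ gives neither injectivity of $\overline{L_Q}|_{\Adj^0(Q)}$ (elements of the kernel could lie outside $\widetilde Q_0$) nor principality of $Q$ (you have only exhibited $Q$ as a subquandle of a principal quandle). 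Connectedness does hold---$\dis(\widetilde Q)$ acts by left translation by the elements $(ge_{x_0}g^{-1})(he_{x_0}h^{-1})^{-1}=e_{g\cdot x_0}e_{h\cdot x_0}^{-1}$, and since $Q$ is connected these range over all generators $e_xe_y^{-1}$ of $\Adj^0(Q)$---but this must be said.

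On your (ii)$\Rightarrow$(i): there is a genuine gap. Reducing to connected abelian covers via Theorem~\ref{cover of principal} is fine, but the sentence ``using the universal property of the adjoint group the pulled-back extension must split'' is the entire content of the implication and you have not supplied it. You would need to build an explicit section $\dis(Q)\cong\Adj^0(Q)\to\dis(E)$ compatible with the automorphisms $\widehat{e_{x_0}}$ on each side and then translate that group-theoretic splitting into a map $\gamma\colon Q\to A$ with $\theta_{x,y}=\gamma(x*y)-\gamma(y)$; neither step follows from the bare adjoint-functor property. You also treat only connected covers; passing from there to arbitrary covers requires Proposition~\ref{trivial for connected} and should be stated.
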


In particular simply connected quandles are principal by Theorem \ref{general simply}, so in this section we are dealing with quandles of the form $\mathcal{Q}(G,f)$. 

Simply connected quandles can be also characterized by using quandle cocycles with values in arbitrary groups as in \cite[Theorem 1.2]{MeAndPetr}. Using Corollary \ref{H^2(G)} we can extend the very same result to connected quandles (answering affirmatively to an open question given in \cite[Remark 9.7]{MeAndPetr}).

\begin{proposition}\label{simply then all groups}
    Let $Q$ be a connected quandle. The following are equivalent:
    \begin{enumerate}
        \item[(i)] $H^2(Q,\sym_S)=1$ for every set $S$.
        \item[(ii)] $H^2(Q,G)=1$ for every group $G$. 
 \item[(iii)] $Q$ is simply connected. 
    \end{enumerate}
\end{proposition}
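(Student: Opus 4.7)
The plan is to close a small triangle of implications, observing that two of the three are essentially formal and the remaining one is the content of Corollary~\ref{H^2(G)}, which has already been proved.

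First I would handle the equivalence (i) $\Leftrightarrow$ (iii). By the definition of simply connected recalled at the start of the subsection, $Q$ being simply connected means that every cover $Q\times_\theta S$ is equivalent to the trivial one, which by the characterization cited from \cite[Proposition 2.16]{MeAndPetr} is exactly the statement $H^2(Q,S)=\textbf{1}$ for every set $S$, i.e.\ $H^2(Q,\sym_S)=1$ for every $S$ (recalling that $Z^2(Q,S)$ means cocycles valued in $\sym_S$). So (i) and (iii) are literally the same assertion.

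Next, (ii) $\Rightarrow$ (i) is immediate: for any set $S$, the symmetric group $\sym_S$ is itself a group, so applying (ii) with $G=\sym_S$ gives $H^2(Q,\sym_S)=1$.

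The real content is (i) $\Rightarrow$ (ii). Let $G$ be an arbitrary group and $\theta\in Z^2(Q,G)$. As explained in the paragraph preceding Corollary~\ref{H^2(G)}, we can view $\theta$ as an element of $Z^2(Q,\sym_G)$ by identifying each $g\in G$ with its canonical left action on $G$. By assumption (i), $[\theta]_\sim=[\textbf{1}]_\sim$ in $H^2(Q,\sym_G)$. Since $Q$ is connected, Corollary~\ref{H^2(G)} applies and yields $[\theta]_\sim=[\textbf{1}]_\sim$ in $H^2(Q,G)$. As $G$ and $\theta$ were arbitrary, (ii) follows.

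There is no real obstacle here: once Corollary~\ref{H^2(G)} is in hand, the proposition is just the observation that the bridge from $\sym_G$-valued cocycles back to $G$-valued cocycles works for every group $G$, together with the trivial fact that symmetric groups are groups. The only subtlety to keep in mind is that connectedness of $Q$ is essential for invoking Corollary~\ref{H^2(G)}; without it the implication (i) $\Rightarrow$ (ii) can fail, which is precisely why the open question in \cite[Remark 9.7]{MeAndPetr} was nontrivial in the general setting and why the present formulation is restricted to connected quandles.
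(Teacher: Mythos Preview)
Your proof is correct and follows exactly the approach the paper intends: the paper gives no explicit proof but simply says the proposition follows from Corollary~\ref{H^2(G)}, and you have spelled out precisely that argument. The triangle of implications you describe---(i)$\Leftrightarrow$(iii) by definition, (ii)$\Rightarrow$(i) because $\sym_S$ is a group, and (i)$\Rightarrow$(ii) via Corollary~\ref{H^2(G)}---is the natural and intended reasoning.
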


We are going to offer another characterization of finite simply connected quandles.

\begin{lemma}\label{connected cover for simply 1}
    Let $Q$ be a quandle. If $Q$ is simply connected then every connected cover of $Q$ is isomorphic to $Q$. 
\end{lemma}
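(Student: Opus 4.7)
The plan is to unpack the definition of simply connected directly and then read off the orbit structure of a trivial cover. Let $p : E \longrightarrow Q$ be a connected cover. By definition of simply connected, every cover of $Q$ is equivalent to $Q \times_{\mathbf{1}} S$ for some set $S$; in particular, $E$ is isomorphic (as a quandle) to $Q \times_{\mathbf{1}} S$ for some $S$.

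Next, I would invoke the orbit computation recalled in Example \ref{ex trivial cov}, which states that in the trivial cover we have
\[
(x,a)^{\lmlt(Q \times_{\mathbf{1}} S)} \;=\; x^{\lmlt(Q)} \times \{a\}.
\]
Since a simply connected quandle is by definition connected, $x^{\lmlt(Q)} = Q$, so the $\lmlt$-orbits in $Q \times_{\mathbf{1}} S$ are exactly the ``horizontal slices'' $Q \times \{a\}$ for $a \in S$.

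Finally, the assumption that $E$ is connected, together with the isomorphism $E \cong Q \times_{\mathbf{1}} S$, forces the cover $Q \times_{\mathbf{1}} S$ to have a single $\lmlt$-orbit; comparing with the slice description gives $Q \times \{a\} = Q \times S$, hence $|S| = 1$ and therefore $E \cong Q$. There is no real obstacle: the lemma is essentially an immediate reformulation of the definition of simply connected combined with the trivial-cover orbit identity already recorded in Example \ref{ex trivial cov}.
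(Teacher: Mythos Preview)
Your proof is correct and follows essentially the same line as the paper's. The paper phrases the final step via Proposition \ref{trivial for connected}(iii) (the restriction of $p$ to an orbit is an isomorphism onto $Q$, and $E$ connected means $E$ equals that orbit), whereas you cite the orbit computation in Example \ref{ex trivial cov} directly; these are two ways of saying the same thing.
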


\begin{proof}
Let $E$ be a connected cover of $Q$. Then $E=x^{\lmlt(E)}$ for every $x\in E$ and so $E\cong Q$ according to Proposition \ref{trivial for connected}.
\end{proof}

\begin{theorem}\label{H2 abelian}
Let $Q$ be a finite quandle. The following are equivalent:
\begin{itemize}
\item[(i)] $Q$ is simply connected.
\item[(ii)] $Q$ is principal and $H^2(Q,\Z_p)=\{\textbf{1}\}$ for every prime $p$.
        \item[(iii)] Every connected cover of $Q$ is isomorphic to $Q$. 
\end{itemize} 


\end{theorem}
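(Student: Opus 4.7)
The plan is to prove the theorem by the cyclic chain of implications (i) $\Rightarrow$ (ii) $\Rightarrow$ (iii) $\Rightarrow$ (i). Observe that (i) presupposes $Q$ connected (by definition of simply connected), and that (iii) is vacuous when $Q$ is disconnected, because any connected cover surjects onto $Q$ and surjective homomorphic images of connected quandles are connected; hence we may take $Q$ connected throughout.

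For (i) $\Rightarrow$ (ii), I would appeal to Theorem \ref{general simply} for the principality of simply connected quandles, and then specialize Proposition \ref{simply then all groups} (which gives $H^2(Q,G)=\{\textbf{1}\}$ for every group $G$) to $G=\Z_p$.

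The substantive implication is (ii) $\Rightarrow$ (iii). Let $E$ be a connected cover of $Q$. Since $Q$ is principal, Theorem \ref{cover of principal} presents $E$ as an abelian cover $Q\times_\theta A$ for some abelian group $A$ and $\theta\in Z^2(Q,A)$. If $\theta\sim\textbf{1}$, the cover is equivalent to $Q\times_{\textbf{1}}A$, whose orbits are the slices $Q\times\{a\}$; connectedness of $E$ forces $|A|=1$ and $E\cong Q$. If instead $\theta\not\sim\textbf{1}$, Proposition \ref{size of abelian covers} produces a connected cover of $Q$ of cardinality $|Q|p$ for some prime $p$, and inspecting its construction shows that this cover is itself an abelian cover with values in a quotient of $A$ of order $p$, namely $\Z_p$. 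The resulting cocycle $\theta''\in Z^2(Q,\Z_p)$ yields a connected cover strictly larger than $Q$, so by Corollary \ref{trivial for connected finite} we have $\theta''\not\sim\textbf{1}$, contradicting the hypothesis $H^2(Q,\Z_p)=\{\textbf{1}\}$. The main obstacle here is checking that the prime-index quotient described in Proposition \ref{size of abelian covers} really produces a $\Z_p$-valued cocycle whose class is nontrivial in $H^2(Q,\Z_p)$, which is exactly what that proposition, combined with Corollary \ref{trivial for connected finite}, is designed to deliver.

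For (iii) $\Rightarrow$ (i), I fix $\theta\in Z^2(Q,S)$ and form $E=Q\times_\theta S$. Since $L_{(x,a)}$ depends only on $x$, the kernel of the canonical projection $p\colon E\to Q$ is automatically contained in $\lambda_E$, so $p$ is a covering map. Each orbit $E'=x^{\lmlt(E)}$ is connected by Lemma \ref{orbits are connected}, and Corollary \ref{orbits are onto} ensures that $p|_{E'}$ is surjective onto $Q$ (using connectedness of $Q$); restriction preserves $\ker p\le\lambda_E$, so $E'$ is itself a connected cover of $Q$. By (iii), $E'\cong Q$, hence $|E'|=|Q|$ for every $x\in E$. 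Corollary \ref{trivial for connected finite} now yields $\theta\sim\textbf{1}$, i.e.\ $H^2(Q,S)=\{\textbf{1}\}$, so $Q$ is simply connected.
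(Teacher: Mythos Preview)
Your proof is correct and takes essentially the same approach as the paper, using the same key ingredients (Theorem~\ref{general simply}, Proposition~\ref{simply then all groups}, Theorem~\ref{cover of principal}, Proposition~\ref{size of abelian covers}, Lemma~\ref{orbits are connected}, Corollary~\ref{trivial for connected finite}); the only difference is organizational---you run the implications as a cycle (i)$\Rightarrow$(ii)$\Rightarrow$(iii)$\Rightarrow$(i), whereas the paper proves (i)$\Leftrightarrow$(ii) and (i)$\Leftrightarrow$(iii) separately, and indeed the paper's argument for (ii)$\Rightarrow$(i) is really your (ii)$\Rightarrow$(iii). One minor quibble: your preamble about disconnected $Q$ does not justify restricting to the connected case in a cyclic scheme (if (iii) holds vacuously while (i) fails, then (iii)$\Rightarrow$(i) is false), but this is an ambiguity in the theorem's statement that the paper also leaves implicit.
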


\begin{proof} 

(i) $\Rightarrow$ (ii) It follows by Theorem \ref{general simply} and Proposition \ref{simply then all groups}.

(ii) $\Rightarrow$ (i) Let $E$ be a connected cover of $Q$. According to Theorem \ref{cover of principal} we have that $E\cong Q\times_\theta A$ for some abelian group $A$. Hence there exists a connected cover $E'=Q\times_{\theta'} \Z_p$ for some prime $p$ (see Proposition \ref{size of abelian covers}). But $H^2(Q,\Z_p)=\{\textbf{1}\}$ and so $E'$ is trivial, contradiction.

(i) $\Leftrightarrow$ (iii) The necessary condition is Lemma \ref{connected cover for simply 1}. Let $E=Q\times_\theta S$ be a cover of $Q$ and $E'=x^{\lmlt(E)}$. According to Lemma \ref{orbits are connected}, $E'$ is a connected cover of $Q$. Then $|E'|=|Q|$ and so we can conclude by Corollary \ref{trivial for connected finite}.
\end{proof}

If a quandle has a simply connected factor, then we can identify some properties of the underlying congruence and related subgroups. 
\begin{proposition}\label{cor 2}
    Let $Q$ be a finite connected quandle and $\alpha\in Con(Q)$. If $Q/\alpha$ is simply connected then $\alpha=\mathcal{O}_{\dis_\alpha}$ and $\dis_\alpha=\dis^\alpha$.
\end{proposition}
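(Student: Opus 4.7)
The plan is to exhibit two carefully chosen connected covers of $Q/\alpha$ and exploit the fact, given by Theorem \ref{H2 abelian}(iii), that simple connectedness forces every connected cover to be isomorphic to the base and in particular to have the same cardinality. The first cover will pin down $\alpha=\mathcal{O}_{\dis_\alpha}$, and comparing two others will pin down $\dis_\alpha=\dis^\alpha$.

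For the first equality, set $\beta=\mathcal{O}_{\dis_\alpha}$, so $\beta\leq\alpha$ automatically, and I would argue that the canonical map $Q/\beta\longrightarrow Q/\alpha$ is a covering map between connected quandles. Connectedness of $Q/\beta$ is automatic as a homomorphic image of $Q$. For the covering property, the relative displacement group of $\alpha/\beta$ on $Q/\beta$ is generated by elements of the form $L_{[x]_\beta}L_{[y]_\beta}^{-1}=\pi_\beta(L_xL_y^{-1})$ with $x\,\alpha\,y$, hence equals $\pi_\beta(\dis_\alpha)$. But the $\beta$-blocks are by construction the orbits of $\dis_\alpha$, so every element of $\dis_\alpha$ preserves every $\beta$-block setwise, i.e.\ $\dis_\alpha\leq\lmlt^\beta=\ker\pi_\beta$. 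Therefore $\dis_{\alpha/\beta}=1$, which means $\alpha/\beta\leq\lambda_{Q/\beta}$. Since $Q/\alpha$ is simply connected and finite, Theorem \ref{H2 abelian}(iii) gives $Q/\beta\cong Q/\alpha$, so $|Q/\beta|=|Q/\alpha|$ and $\beta=\alpha$.

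For the second equality, I apply Lemma \ref{Q_N} to the two normal subgroups $\dis_\alpha$ and $\dis^\alpha$ of $\lmlt(Q)$; normality of $\dis^\alpha=\lmlt^\alpha\cap\dis(Q)$ is automatic as an intersection of normal subgroups. We have $\mathcal{O}_{\dis_\alpha}=\alpha$ by the previous step, and also $\mathcal{O}_{\dis^\alpha}=\alpha$, since $\dis_\alpha\leq\dis^\alpha$ forces $\alpha\leq\mathcal{O}_{\dis^\alpha}$, while $\dis^\alpha\leq\lmlt^\alpha$ forces $\mathcal{O}_{\dis^\alpha}\leq\alpha$. Hence Lemma \ref{Q_N} produces two connected covers $Q_{\dis_\alpha}$ and $Q_{\dis^\alpha}$ of $Q/\alpha$, and simple connectedness of $Q/\alpha$ forces both to have cardinality $|Q/\alpha|$. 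Since $|Q_N|=|\dis(Q)|/|N|$, we deduce $|\dis_\alpha|=|\dis^\alpha|$, which combined with the inclusion $\dis_\alpha\leq\dis^\alpha$ gives $\dis_\alpha=\dis^\alpha$. The main delicate step is the verification that $Q/\beta$ is genuinely a cover of $Q/\alpha$: it rests on the identity $\dis_{\alpha/\beta}=\pi_\beta(\dis_\alpha)$ and on the observation that $\dis_\alpha$ lies in $\ker\pi_\beta$ precisely because its orbits coincide with the $\beta$-classes. Once this is in place, the rest reduces to size counts against Theorem \ref{H2 abelian}(iii).
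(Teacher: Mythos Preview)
Your proof is correct and follows essentially the same strategy as the paper's: build connected covers of $Q/\alpha$ and use Theorem~\ref{H2 abelian}(iii) to force size equalities. The only notable difference is that you take $\beta=\mathcal{O}_{\dis_\alpha}$ and argue directly that $\dis_\alpha\leq\ker\pi_\beta$ (since $\dis_\alpha$ preserves its own orbits), whereas the paper takes $\beta=\mathcal{O}_{\dis^\alpha}$ and invokes the Galois connection of \cite[Theorem 1.10]{semimedial} to obtain $\dis^\alpha=\dis^\beta$; and for the second equality you apply Lemma~\ref{Q_N} twice (to $\dis_\alpha$ and $\dis^\alpha$) rather than going through Proposition~\ref{coset reps quotient} and Corollary~\ref{cor 1}. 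Your route is slightly more self-contained, as it avoids the external Galois-connection reference and sidesteps the hypothesis $\alpha=\mathcal{O}_{\dis_\alpha}$ of Corollary~\ref{cor 1} by having already established it in the first step.
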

\begin{proof}
Let $\beta=\mathcal{O}_{\dis^\alpha}  \leq \alpha$ and $G=\dis(Q)$. According to \cite[Theorem 1.10]{semimedial} the pair of operators $\alpha\mapsto \dis^\alpha$, $N\mapsto \mathcal{O}_N$ provides a Galois connection between the congruence lattice of $Q$ and the normal subgroups of $\lmlt(Q)$. So we have that $\dis^\alpha=\dis^\beta$. Thus, $\dis_\alpha\leq \dis^\alpha=\dis^\beta$ and accordingly $\pi_{\beta}(\dis_\alpha)=\dis_{\alpha/\beta}=1$ \cite[Proposition 3.2 (1)]{CP}. Thus, $\alpha/\beta\leq \lambda_{Q/\beta}$ and $(Q/\beta)/(\alpha/\beta)\cong Q/\alpha$. Therefore $Q/\beta$ is a connected cover of $Q/\alpha$. The quandle $Q/\alpha$ is simply connected, and so $|Q/\alpha|=|Q/\beta|$ by Theorem \ref{H2 abelian} and so $\alpha=\beta$.

 
According to Proposition \ref{coset reps quotient} we have $Q/\alpha\cong \mathcal{Q}(G/\dis^\alpha,f_{\dis^\alpha})$. The quandle $E=\mathcal{Q}(G/\dis_\alpha,f_{\dis_\alpha})$ is a connected cover of $Q/\alpha$ according to Corollary \ref{cor 1}. Therefore $|Q/\alpha|=[G:\dis^\alpha]=|E|=[G:\dis_\alpha]$, and so necessarily $\dis_\alpha=\dis^\alpha$.
    \end{proof}

Let us turn out attention to simply connected latin quandles. Simply connected quandles are not necessarily latin, e.g. {\tt SmallQuandle}(8,1) in the \cite{RIG} database of connected quandles is simply connected and not faithful.

In general, the property of being simply connected is not stable under taking quotients or subalgebras. For latin quandles we have the following results.

\begin{proposition}\label{3 gen}
Let $Q$ be a latin quandle and $u\in Q$. If $Sg(u,x,y)$ is simply connected for every $x,y\in Q$ then $Q$ is simply connected.
\end{proposition}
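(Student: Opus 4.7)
The plan is to combine Propositions \ref{normalized cocycles} and \ref{simply then all groups} with three carefully chosen three-generator subalgebras. Since $Q$ is latin, for every $x, y \in Q$ we have $y = (y/x) * x \in x^{\lmlt(Q)}$, so $Q$ is connected; Proposition \ref{simply then all groups} then reduces simply connectedness to showing $H^2(Q, G) = \{\mathbf{1}\}$ for every group $G$. Fix such a $G$ and $\theta \in Z^2(Q, G)$. By Proposition \ref{normalized cocycles} we may assume $\theta$ is $u$-normalized, and the second part of that proposition tells us that once we show $\theta \sim \mathbf{1}$ we actually obtain $\theta = \mathbf{1}$. The whole problem thus boils down to proving $\theta_{x, y} = 1$ for arbitrary $x, y \in Q$.

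Fix $x, y \in Q$ and, exploiting the right-divisions available in the latin $Q$, introduce
\[
S_1 = Sg(u, x, y), \qquad S_2 = Sg(u, y/u, (xy)/u), \qquad S_3 = Sg(u, y, xy),
\]
each of which is simply connected by hypothesis. Since $y = (y/u)*u$ and $xy = ((xy)/u)*u$ lie in $S_2$, and $x*y = xy$ lies in $S_1$, one obtains $S_3 \subseteq S_1 \cap S_2$. By Proposition \ref{simply then all groups} applied to $S_1$ and $S_2$, for $i = 1, 2$ the restriction $\theta|_{S_i}$ is cohomologous to $\mathbf{1}$, giving $\gamma_i : S_i \to G$ with $\gamma_i(u) = 1$ and $\theta_{a, b} = \gamma_i(ab)\gamma_i(b)^{-1}$ for all $a, b \in S_i$. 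Specializing $b = u$ and using $u$-normalization yields $\gamma_i(au) = 1$ for every $a \in S_i$; taking $a = y/u$ and $a = (xy)/u$ in $S_2$ in particular gives $\gamma_2(y) = \gamma_2(xy) = 1$.

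The crux is then a uniqueness/gluing argument on the overlap $S_3$. Both $\gamma_1|_{S_3}$ and $\gamma_2|_{S_3}$ trivialize $\theta|_{S_3}$ and send $u$ to $1$, so their pointwise ratio $\delta = \gamma_1 \gamma_2^{-1}$ satisfies $\delta(ab) = \delta(b)$ for every $a, b \in S_3$ and $\delta(u) = 1$; since $S_3$ is simply connected (hence connected), $\delta$ is $\lmlt(S_3)$-invariant and therefore constant, forcing $\delta \equiv 1$. Consequently $\gamma_1(y) = \gamma_2(y) = 1$ and $\gamma_1(xy) = \gamma_2(xy) = 1$, so
\[
\theta_{x, y} = \gamma_1(xy) \gamma_1(y)^{-1} = 1,
\]
which, being valid for arbitrary $x, y$, yields $\theta = \mathbf{1}$ as required.

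The main obstacle the proof has to overcome is that $y/u$ and $(xy)/u$ need not belong to $S_1$, so $u$-normalization inside $S_1$ alone does not suffice to trivialize $\gamma_1$ at $y$ or $xy$; the role of $S_2$ is exactly to contain these right-quotients, and the role of $S_3$ is to provide a three-generator common subalgebra (still covered by the hypothesis) through which the two local trivializations can be identified.
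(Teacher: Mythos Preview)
Your argument is essentially correct, but there is a small slip in the gluing step: from $\gamma_1(ab)\gamma_1(b)^{-1}=\gamma_2(ab)\gamma_2(b)^{-1}$ one obtains $\gamma_2(ab)^{-1}\gamma_1(ab)=\gamma_2(b)^{-1}\gamma_1(b)$, so the map that is $\lmlt(S_3)$-invariant is $\delta=\gamma_2^{-1}\gamma_1$, not $\gamma_1\gamma_2^{-1}$. With this fix the rest goes through verbatim.

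That said, the paper's route is considerably shorter and uses only the single subalgebra $S=Sg(u,x,y)$. Since $u\in S$, the restriction $\theta|_{S\times S}$ is again a $u$-normalized quandle cocycle; as $S$ is simply connected, $\theta|_{S\times S}\sim\mathbf{1}$, and then the second clause of Proposition~\ref{normalized cocycles} \emph{applied to $S$} forces $\theta|_{S\times S}=\mathbf{1}$, whence $\theta_{x,y}=1$. No auxiliary subalgebras, no gluing.

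The difference between the two approaches is exactly the point you flag at the end: the paper invokes Proposition~\ref{normalized cocycles} on the subalgebra $S$, which, as stated, asks $S$ to be latin. Your three-subalgebra argument sidesteps this hypothesis by manufacturing the needed right-quotients $y/u$ and $(xy)/u$ inside the ambient latin $Q$ and then transporting the resulting information back to $S_1$ via the connected overlap $S_3$. So what you lose in brevity you gain in robustness: your proof does not require the three-generated subalgebras to be closed under right division. If one is content with the paper's implicit use of Proposition~\ref{normalized cocycles} on $S$, however, the detour through $S_2$ and $S_3$ is unnecessary.
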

  
\begin{proof}
Let $\theta$ be a $u$-normalized cocycle, $x,y\in Q$ and $S=Sg(u,x,y)$. Then $\theta|_{S\times S}$ is a $u$-normalized cocycle and so $\theta|_{S\times S}=\textbf{1}$ since $S$ is simply connected. Therefore $\theta_{x,y}=1$.  
\end{proof}
        
\begin{proposition}\label{simply factor}
    Let $Q$ be a finite simply connected latin quandle and $\alpha\in Con(Q)$. Then $Q/\alpha$ is simply connected.
\end{proposition}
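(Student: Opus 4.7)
The plan is to combine the cocycle characterization of simply connectedness for connected quandles (Proposition \ref{simply then all groups}) with the cocycle normalization that is peculiar to the latin setting (Proposition \ref{normalized cocycles}). Observe first that $Q/\alpha$ is again latin, since homomorphic images of latin left quasigroups are latin, and every latin quandle is connected ($R_u$ bijective forces $\lmlt(Q/\alpha)$-transitivity). Hence Proposition \ref{simply then all groups} applies to $Q/\alpha$, and it suffices to prove that $H^2(Q/\alpha,\mathrm{Sym}_S)=\{\mathbf{1}\}$ for every set $S$.

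Fix $u\in Q$ and put $\bar u=[u]_\alpha$. Let $\theta\in Z^2(Q/\alpha,\mathrm{Sym}_S)$. By the normalization half of Proposition \ref{normalized cocycles} applied to $Q/\alpha$ at $\bar u$, we may assume $\theta_{[x],\bar u}=1$ for every $[x]\in Q/\alpha$. Pull $\theta$ back along the quotient map by defining
\[
\tilde\theta_{x,y}=\theta_{[x]_\alpha,[y]_\alpha},\qquad x,y\in Q.
\]
Because $Q\longrightarrow Q/\alpha$ is a surjective quandle homomorphism, a direct check shows that $\tilde\theta$ inherits the constant cocycle identity \eqref{CC} and the quandle cocycle identity \eqref{quandle cocycle} from $\theta$, so $\tilde\theta\in Z^2(Q,\mathrm{Sym}_S)$; moreover $\tilde\theta_{x,u}=\theta_{[x],\bar u}=1$ for every $x\in Q$, so $\tilde\theta$ is itself $u$-normalized.

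Now the hypothesis enters: since $Q$ is simply connected, Proposition \ref{simply then all groups} yields $\tilde\theta\sim\mathbf{1}$ in $H^2(Q,\mathrm{Sym}_S)$. Because $Q$ is latin and $\tilde\theta$ is $u$-normalized, the rigidity clause of Proposition \ref{normalized cocycles} upgrades this to $\tilde\theta=\mathbf{1}$. Surjectivity of the projection then gives $\theta_{[x],[y]}=\tilde\theta_{x,y}=1$ for all $[x],[y]\in Q/\alpha$, i.e.\ $\theta=\mathbf{1}$. Thus $H^2(Q/\alpha,\mathrm{Sym}_S)=\{\mathbf{1}\}$, and applying Proposition \ref{simply then all groups} in the other direction concludes that $Q/\alpha$ is simply connected.

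The only step that really requires verification — and what I expect to be the main, if routine, obstacle — is that the pullback $\tilde\theta$ is genuinely a quandle cocycle and that the $\bar u$-normalization of $\theta$ transfers to a $u$-normalization of $\tilde\theta$. Both reduce to unwinding the identity $[x*y]_\alpha=[x]_\alpha*[y]_\alpha$. Everything else is the interplay between pulling cocycles back along a surjective quotient and the rigidity enforced by the latin hypothesis through Proposition \ref{normalized cocycles}.
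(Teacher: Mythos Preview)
Your proof is correct and follows essentially the same route as the paper: normalize a cocycle on $Q/\alpha$, pull it back along the quotient map to a $u$-normalized cocycle on $Q$, use simply connectedness of $Q$ to make it cohomologous to $\mathbf{1}$, and then apply the rigidity clause of Proposition~\ref{normalized cocycles} to conclude it is literally $\mathbf{1}$. The only difference is cosmetic: you work with cocycles valued in $\mathrm{Sym}_S$ and invoke Proposition~\ref{simply then all groups} explicitly, while the paper works directly with cocycles valued in an arbitrary group $G$.
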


\begin{proof}
Note that $Q/\alpha$ is latin. Let $\theta$ be a $[u]$-normalized constant quandle cocycle of $Q/\alpha$ with values in $G$. Then
$$\widetilde \theta:Q\times Q\longrightarrow Q/\alpha\times Q/\alpha\longrightarrow G,\quad (x,y)\mapsto ([x],[y])\mapsto \theta_{[x],[y]}  $$
is a $u$-normalized quandle cocycle of $Q$. Thus, according to Proposition \ref{normalized cocycles} we have $\widetilde \theta=1$ and so also $\theta=1$. Thus, $Q/\alpha$ is simply connected.
\end{proof}

\subsection{Simply connected nilpotent quandles}



Finite principal quandles over nilpotent groups admits a prime power decompositions obtained by using the analog decomposition of the underlying group.

\begin{lemma}\label{direct dec}
    Let $G$ be a finite nilpotent group and $f\in \aut{G}$. Then $\mathcal{Q}(G,f)\cong \prod_{p | |G|} \mathcal{Q}(S_p,f|_{S_p})$ where $S_p$ is the unique $p$-Sylow subgroup of $G$. 
\end{lemma}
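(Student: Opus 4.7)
The plan is to transfer the well-known decomposition of a finite nilpotent group as the direct product of its Sylow subgroups to the level of the principal quandle construction.

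First I would recall that for a finite nilpotent group $G$, each Sylow $p$-subgroup $S_p$ is unique and hence characteristic in $G$. Consequently, for every $f \in \Aut{G}$ we have $f(S_p) = S_p$, so $f$ restricts to an automorphism $f_p := f|_{S_p} \in \Aut{S_p}$ for each prime $p$ dividing $|G|$. Moreover, the canonical map
\[
\Phi: \prod_{p\mid |G|} S_p \longrightarrow G, \qquad (x_p)_p \longmapsto \prod_p x_p,
\]
is a group isomorphism, and under $\Phi$ the automorphism $f$ corresponds to the componentwise automorphism $\prod_p f_p$; that is, $f(\Phi((x_p)_p)) = \Phi((f_p(x_p))_p)$.

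Next I would verify that $\Phi$ is a quandle isomorphism from $\prod_p \mathcal{Q}(S_p, f_p)$ to $\mathcal{Q}(G,f)$. Since the elements of distinct Sylow subgroups commute in $G$, we compute for $x=(x_p)_p$ and $y=(y_p)_p$:
\[
\Phi(x) \ast \Phi(y) = \Phi(x)\, f(\Phi(x)^{-1}\Phi(y)) = \Phi(x)\, \Phi\bigl((f_p(x_p^{-1} y_p))_p\bigr) = \Phi\bigl((x_p\, f_p(x_p^{-1} y_p))_p\bigr),
\]
and the last expression is precisely $\Phi(x \ast y)$ with respect to the componentwise operation on $\prod_p \mathcal{Q}(S_p, f_p)$. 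Hence $\Phi$ preserves $\ast$ (and thus also the left division $\backslash$, which is determined by $\ast$ in a left quasigroup), so it is an isomorphism of quandles.

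There is no real obstacle here: the entire argument rests on two standard facts, namely that the Sylow subgroups of a nilpotent group are characteristic and that $G$ is their internal direct product. The only thing to be a bit careful about is making explicit that $f$ respects the decomposition so that the restrictions $f_p$ make sense as automorphisms, and that the principal quandle operation $x \ast y = x f(x^{-1}y)$ is literally componentwise under the identification $G \cong \prod_p S_p$.
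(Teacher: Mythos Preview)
Your proof is correct and essentially the same as the paper's: both use that the Sylow subgroups of a finite nilpotent group are characteristic (hence $f$-invariant) and that $G$ is their internal direct product, then verify by direct computation that the resulting group isomorphism is a quandle isomorphism. The only cosmetic difference is the direction of the map---the paper writes the isomorphism from $\mathcal{Q}(G,f)$ to the product via the canonical projections $g_i:G\to S_{p_i}$, while you write its inverse $\Phi$.
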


\begin{proof}
 Let $|G|=p_1^{k_1}\cdots p_r^{k_r}$. The group $G$ is the direct product of its $p$-Sylow subgroups and the $p$-Sylow subgroups of $G$ are invariant under $f$. Let $g_i:G\longrightarrow S_{p_i}$ the canonical group homomorphism  onto the Sylow subgroup $S_{p_i}$ for every $i=1,\ldots, r$. 
 It is easy to check that the map $$\mathcal{Q}(G,f) \longrightarrow \prod_{i=1}^r \mathcal{Q}(S_{p_i},f|_{S_{p_i}}),\quad x\mapsto (g_{1}(x),\ldots  g_{r}(x))$$
    is a quandle isomorphism.
    \end{proof}

Covers of principal quandles over nilpotent groups are also principal over nilpotent groups.

\begin{proposition}\label{cover of affine}
    Let $G$ be a nilpotent group, $Q=\mathcal{Q}(G,f)$ be a connected quandle and $E$ be a connected cover of $Q$. Then $E$ is nilpotent and principal over a nilpotent group.
%
%
%
\end{proposition}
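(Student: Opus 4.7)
The starting point is Theorem \ref{cover of principal}: since $Q$ is principal, the connected cover $E$ is principal and admits a realization as an abelian cover $E\cong Q\times_\theta A$ for some abelian group $A$ and some $\theta\in Z^2(Q,A)$. The plan is first to establish that $E$ is nilpotent as a quandle, and then to produce an explicit presentation of $E$ as a principal quandle over a nilpotent group.

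To check nilpotence, I would start by noting that $Q$ itself is already nilpotent. A direct computation in $Q=\mathcal{Q}(G,f)$ gives $L_g L_h^{-1}(x)=gf(g^{-1}h)h^{-1}\cdot x$, so the map $L_g L_h^{-1}\mapsto gf(g^{-1}h)h^{-1}$ identifies $\dis(Q)$ with a subgroup of $G$ via left translations; subgroups of nilpotent groups are nilpotent, so $\dis(Q)$ is nilpotent, and by \cite[Theorem 1.2]{CP} the quandle $Q$ is nilpotent. Next I would show that the kernel $\ker{p}$ of the canonical projection is a central congruence of $E$. Since $L_{(x,a)}(y,b)=(x*y,\theta_{x,y}+b)$ does not depend on $a$, one has $\dis_{\ker{p}}=1$, trivially central in $\dis(E)$; and by writing an arbitrary element of $\dis(E)$ in the form $(z,c)\mapsto(g(z),c+\beta(z))$ with $g\in\dis(Q)$ and $\beta\colon Q\to A$, the stabilizer of $(x,a)$ is seen to depend only on $x$, so $\dis(E)_{(x,a)}=\dis(E)_{(x,b)}$ whenever $(x,a)$ and $(x,b)$ are $\ker{p}$-related. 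Pulling back through $p$ a central series of $Q$ and prepending $0_E\leq\ker{p}$ then yields a central series of $E$, whence $E$ is nilpotent.

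For the second part, \cite[Theorem 1.2]{CP} now implies that $\dis(E)$ is nilpotent. Because $E$ is principal, the same explicit computation as for $Q$ shows that $\dis(E)$ acts on $E$ regularly via left translations, so $\dis(E)_e=1$ for every $e\in E$; Proposition \ref{coset reps} then produces the presentation $E\cong\mathcal{Q}(\dis(E),\widehat{L_e})$, exhibiting $E$ as a principal quandle over the nilpotent group $\dis(E)$.

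The step I expect to require the most care is the verification that $\ker{p}$ is a central congruence in the precise sense of the paper: it hinges on both $\dis_{\ker{p}}=1$ and on the equality of stabilizers along $\ker{p}$-classes, and both facts rely on the explicit coordinate description of the action of $\dis(E)$ on $E=Q\times A$.
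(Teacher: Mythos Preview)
Your proof is correct and follows the same overall architecture as the paper's: invoke Theorem \ref{cover of principal} to realize $E$ as an abelian cover $Q\times_\theta A$, establish that $\ker{p}$ is a central congruence so that $E$ is nilpotent, deduce that $\dis(E)$ is nilpotent via \cite[Theorem 1.2]{CP}, and finish with the coset representation of Proposition \ref{coset reps} using that $\dis(E)$ acts freely on a principal quandle.

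The only real difference is in how centrality of $\ker{p}$ is obtained. The paper simply observes that an abelian cover is a central extension in the sense of \cite{CP} and invokes \cite[Proposition 7.5]{CP}. You instead verify the two conditions of \cite[Theorem 1.1]{CP} by hand: $\dis_{\ker{p}}=1$ because $L_{(x,a)}$ is independent of $a$, and the stabilizers $\dis(E)_{(x,a)}$ are constant along $\ker{p}$-classes because every element of $\dis(E)$ acts as $(z,c)\mapsto(g(z),c+\beta(z))$. Your route is more self-contained and makes the mechanism transparent; the paper's is shorter but relies on the reader knowing that abelian covers are precisely central extensions. Both are fine, and your explicit description of the $\dis(E)$-action is exactly what one needs to justify the final step $\dis(E)_e=1$, which the paper leaves implicit.
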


\begin{proof}
 %
Note that $Q$ is nilpotent. According to Theorem \ref{cover of principal}, if $E$ is a connected cover of $Q$ i.e. $E\cong Q\times_\theta A$ for some abelian group $A$ and some abelian cocycle $\theta$. Thus, $E$ is a central extension of $Q$, and so $\ker{p}$ is a central congruence \cite[Proposition 7.5]{CP}. Therefore $E$ is nilpotent and so also $\dis(E)$ is nilpotent. Hence, $E\cong \mathcal{Q}(\dis(E),\widehat{L_x})$ according to Proposition \ref{coset reps}.
%
%
 %
\end{proof}

Let $n\in \mathbb{N}$. We define $\rad(n)=\prod_{p\in P} p$ where $P=\setof{p \text{ prime}}{p \text{ divides }n}$. For finite connected covers of principal quandles over nilpotent groups we have the following restriction on size.

\begin{proposition}\label{rad of covering}
    Let $G$ be a finite nilpotent group and $Q=\mathcal{Q}(G,f)$ and $E$ be a finite connected cover of $Q$. Then $\rad(|E|)=\rad(|Q|)$.
\end{proposition}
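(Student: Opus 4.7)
\emph{Proof plan.} The easy direction $\rad(|Q|)\mid \rad(|E|)$ follows from Theorem~\ref{cover of principal}: since $E\cong Q\times_\theta A$ for some abelian group $A$, we have $|Q|\mid |E|$. For the reverse direction, I intend to suppose for contradiction that a prime $q$ divides $|E|$ but not $|Q|$, and to produce a nontrivial connected projection subquandle inside $E$---which by Example~\ref{examples}(i) is impossible.

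By Proposition~\ref{cover of affine} we may write $E\cong \Q(H,g)$ with $H$ nilpotent. A direct calculation gives $L_xL_y^{-1}(z)=xg(x^{-1}y)y^{-1}z$, so $\dis(E)$ acts on $E=H$ by left translations through a subgroup of $H$; connectedness of $E$ forces $\dis(E)=H$, and symmetrically $\dis(Q)=G$. The canonical map $\pi_{\ker{p}}:\lmlt(E)\longrightarrow\lmlt(Q)$ sends $L_xL_y^{-1}$ to $L_{[x]}L_{[y]}^{-1}$ and therefore restricts to a surjection $H\twoheadrightarrow G$. Because both groups are nilpotent with unique Sylow subgroups, this surjection must map $H_r$ onto $G_r$ for every prime $r$.

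Now assume $q\mid |H|$ and $q\nmid |G|$. Then $G_q=1$, so the nontrivial subgroup $H_q$ lies in the kernel of $H\twoheadrightarrow G$, that is, $H_q\leq \dis^{\ker{p}}$. Evaluating the left-translation action at $1\in H=E$, every $h\in H_q$ satisfies $h\,\ker{p}\,1$; since $p$ is a covering map, $\ker{p}\leq\lambda_E$, so $L_h=L_1$ in $\lmlt(E)$. Evaluating this identity at $z=1$ yields $hg(h^{-1})=1$, i.e.\ $g(h)=h$. Hence $g$ fixes $H_q$ pointwise.

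Consequently $E_q\cong \Q(H_q,\mathrm{id})$ is a projection quandle. By Lemma~\ref{direct dec} $E_q$ is a direct factor of $E$, hence a surjective image of the connected quandle $E$, and therefore connected itself. But the only connected projection quandle has size $1$, contradicting $|H_q|>1$. The main technical point is the identification $\dis(E)=H$ in the principal connected setting; once this is in hand, the contradiction reduces to Sylow bookkeeping along the surjection $H\twoheadrightarrow G$ combined with the covering hypothesis $\ker{p}\leq\lambda_E$.
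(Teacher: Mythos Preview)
Your proof is correct and follows essentially the same approach as the paper's. Both arguments show that $E\cong\Q(G',f')$ is principal over a nilpotent group, identify the kernel of the induced group surjection $G'\twoheadrightarrow G$ with the block of $1$ under the covering congruence, deduce that this kernel lies in $Fix(f')$, and then use Lemma~\ref{direct dec} to exhibit the $q$-Sylow factor as a connected projection quandle, forcing it to be trivial. The only cosmetic difference is that the paper invokes Proposition~\ref{coset reps quotient} to obtain $G\cong G'/H$ with $H=[1]_{\ker p}$ directly, while you recover the same surjection by computing $\dis(E)\cong H$ via the explicit left-translation formula and then restricting $\pi_{\ker p}$.
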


\begin{proof}
Let $\alpha\in Con(E)$ with $Q=E/\alpha   $ and $\alpha\leq \lambda_E$. According to Proposition \ref{cover of affine}, $E$ is principal over a nilpotent group. Therefore, $E\cong \mathcal{Q}(G',f')$ for some nilpotent group $G'$ such that $G=G'/H$ for some subgroup $H\leq G'$ (see Proposition \ref{coset reps quotient}). The block of $1$ with respect to $\alpha$ coincide with $H$ and it is a projection subquandle. Thus, $1*x=f'(x)=x$ for every $x\in H$, i.e. $H\leq Fix(f')$. Since $|Q|=|G|$ divides $|E|=|G'|=|G||H|$ then $\rad(|Q|)$ divides $\rad(|E|)$. Assume that $p$ does not divide $|Q|$. Then the $p$-Sylow of $G'$ is contained in $H$. According to Lemma \ref{direct dec} we have that $E\cong \mathcal{Q}(S_p,f')\times E'$, and $\mathcal{Q}(S_p,f')$ is a projection quandle. Thus, $\mathcal{Q}(S_p,f')$ is a factor of $E$, and so it is connected. According to Example \ref{examples}(i), $|S_p|=1$. Thus $p$ does not divide $|E|$.
\end{proof}


\begin{lemma}\label{connected cov p^n are finite}
    Let $Q=\mathcal{Q}(G,f)$ be a connected quandle where $G$ is a finite nilpotent group. Connected covers of $Q$ are finite.
\end{lemma}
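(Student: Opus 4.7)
The plan is to use Theorem \ref{cover of principal} to realize any connected cover $E$ of $Q$ as an abelian cover $E \cong Q \times_\theta A$, with canonical projection $p : E \to Q$. Since $E$ is connected, Lemma \ref{A is generated} gives $A = \langle \theta_{x,y} : x,y \in Q \rangle$; because $Q$ is finite, $A$ is a finitely generated abelian group, so $A \cong \Z^k \times B$ with $B$ finite. Thus the entire statement reduces to showing $k = 0$.

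Suppose for contradiction that $k \geq 1$. Choose a prime $\ell$ not dividing $|G| = |Q|$ (for instance any $\ell > |G|$), and set $N = \Z^{k-1} \oplus \ell\Z \oplus B \leq A$, a subgroup of index $\ell$. By Lemma \ref{cong from subs}, the relation $\sim_N$ is a congruence of $E$ with $\sim_N \leq \ker{p}$, so the quotient $E/{\sim_N}$ projects onto $Q$; moreover, since $L_x = L_y$ in $E$ descends to equality of the corresponding left translations in $E/{\sim_N}$, this projection is itself a covering map. As a homomorphic image of a connected quandle, $E/{\sim_N}$ is connected, and its size is $|Q| \cdot [A:N] = |Q|\ell$, in particular finite.

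Now Proposition \ref{rad of covering} applied to the finite connected cover $E/{\sim_N}$ of $Q$ forces $\rad(|Q|\ell) = \rad(|Q|) = \rad(|G|)$, contradicting the choice $\ell \nmid |G|$. Hence $k = 0$, $A = B$ is finite, and so $E$ is finite. The only subtle point is verifying that $E/{\sim_N}$ genuinely qualifies as a connected cover of $Q$ so that Proposition \ref{rad of covering} can be invoked; once this is checked, the contradiction is a clean bookkeeping of prime radicals.
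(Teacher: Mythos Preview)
Your proof is correct and follows essentially the same approach as the paper's own argument: realize $E$ as an abelian cover $Q\times_\theta A$, use Lemma \ref{A is generated} to see $A$ is finitely generated, and if the free rank $k$ were positive, pass to a quotient by a subgroup of prime index $\ell\nmid |G|$ to obtain a finite connected cover contradicting Proposition \ref{rad of covering}. The paper cites Proposition \ref{cover of affine} rather than Theorem \ref{cover of principal} to obtain the abelian-cover structure, but the content is the same; you are in fact a bit more explicit than the paper in checking that $E/{\sim_N}\to Q$ is a covering map.
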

\begin{proof}
    Let $E$ be a connected cover of $Q$. Then $E$ is an abelian cover of $Q$, i.e. $E\cong Q\times_{\theta} A$ for an abelian group $A$ (see Proposition \ref{cover of affine}). The group $A$ is generated by the set $\setof{\theta_{x,y}}{x,y\in Q}$ according to Lemma \ref{A is generated} and so it is finitely generated. In particular $A\cong \Z^k\times A'$ for $k\in \mathbb{N}$ and some finite abelian group $A'$. Every subgroup of $A$ provides a congruence as defined in Lemma \ref{cong from subs}. If $k\neq 0$, let $N=\mathbb{Z}^{k-1}\times q\Z\times A'$ for some prime $q$ not dividing $rad(|G|)$. Then $E'=E/\sim N$ is a finite connected cover of $Q$ and $|E'|=|Q| q$, a contradiction to Proposition \ref{rad of covering}. Hence $k=0$ and $E$ is finite.
\end{proof}


%


The following theorem provide a criterion for a finite connected quandle of prime power order to be simply connected.

\begin{theorem}\label{simply pn}
Let $Q=\mathcal{Q}(G,f)$ be a finite connected quandle and $|G|=p^n$ where $p$ is a prime and $n\in \mathbb{N}$. The following are equivalent: 
\begin{itemize}
    \item[(i)] $Q$ is simply connected.
 \item[(ii)] $H^2(Q,\Z_p)=\{\textbf{1}\}$.
     \item[(iii)]    $Q$ has no connected cover of size $p^{n+1}$.

\end{itemize}
\end{theorem}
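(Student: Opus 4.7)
The plan is to leverage Theorem \ref{H2 abelian}, which reduces simple connectedness for a finite principal quandle to the vanishing of $H^2(Q,\Z_r)$ for \emph{every} prime $r$, and then to use the radical restriction of Proposition \ref{rad of covering} to funnel all the relevant primes back to the single prime $p$ dividing $|Q|$.

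For (i) $\Rightarrow$ (ii) I would simply cite Theorem \ref{H2 abelian} after remarking that $Q=\mathcal{Q}(G,f)$ is automatically principal. For (ii) $\Rightarrow$ (i), which is the substance of the equivalence, I would fix an arbitrary prime $q\neq p$ and $\theta\in Z^2(Q,\Z_q)$, form the cover $E=Q\times_\theta \Z_q$, and examine a single orbit $E'=(x,0)^{\lmlt(E)}$. Writing out a typical element of $E'$ via Lemma \ref{l:Theta}, one sees that $E'\cap(\{y\}\times\Z_q)=\{y\}\times S$ where $S\leq\Z_q$ is the subgroup of ``loop sums'' of $\theta$ at $x$; since $q$ is prime, $|S|\in\{1,q\}$, and hence $|E'|\in\{p^n,p^n q\}$. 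The value $p^n q$ would make $E'$ a connected cover with $\rad(|E'|)=pq\neq p=\rad(|Q|)$, contradicting Proposition \ref{rad of covering}. So $|E'|=p^n=|Q|$ for every orbit, and Corollary \ref{trivial for connected finite} delivers $\theta\sim\mathbf{1}$. Combined with the hypothesis $H^2(Q,\Z_p)=\{\mathbf{1}\}$, this gives $H^2(Q,\Z_r)=\{\mathbf{1}\}$ for every prime $r$, and a second application of Theorem \ref{H2 abelian} gives (i).

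For (i) $\Leftrightarrow$ (iii), the forward direction is immediate from Lemma \ref{connected cover for simply 1}. For the contrapositive of (iii) $\Rightarrow$ (i), if $Q$ is not simply connected then Theorem \ref{H2 abelian} yields a connected cover not isomorphic to $Q$; Theorem \ref{cover of principal} realizes it as an abelian cover, Proposition \ref{size of abelian covers} then extracts from it a connected cover of size $|Q|r$ for some prime $r$, and Proposition \ref{rad of covering} forces $r=p$, producing a connected cover of size $p^{n+1}$. The only step requiring any real attention is the fiber-size dichotomy $|E'|\in\{p^n,p^nq\}$; once it is in place, the radical constraint immediately kills the bad case and the rest is assembly of earlier machinery.
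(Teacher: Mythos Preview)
Your proof is correct and follows essentially the same route as the paper: both rely on Theorem~\ref{H2 abelian}, Theorem~\ref{cover of principal}, Proposition~\ref{size of abelian covers}, Proposition~\ref{rad of covering}, and Corollary~\ref{trivial for connected finite}. The only organizational difference is that the paper argues cyclically $(\mathrm{i})\Rightarrow(\mathrm{ii})\Rightarrow(\mathrm{iii})\Rightarrow(\mathrm{i})$, whereas you prove $(\mathrm{i})\Leftrightarrow(\mathrm{ii})$ and $(\mathrm{i})\Leftrightarrow(\mathrm{iii})$ separately; your explicit fiber argument in $(\mathrm{ii})\Rightarrow(\mathrm{i})$ (that the fiber of $E'$ over $x$ is a subgroup of $\Z_q$, hence of size $1$ or $q$) is valid---it is the image of the homomorphism $\dis(E)_{[(x,0)]_{\ker p}}\to\Z_q$, $g\mapsto$ second coordinate of $g(x,0)$---and amounts to the same radical obstruction the paper invokes via Proposition~\ref{rad of covering}.
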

   \begin{proof}
 (i) $\Rightarrow$ (ii) It follows by Theorem \ref{H2 abelian}.


(ii) $\Rightarrow$ (iii) Let $E$ be a connected cover of $Q$ of size $p^{n+1}$. Then $E$ is an abelian cover of $Q$, i.e. $E\cong Q\times_\theta A$ for some abelian group $A$ and $A\cong \Z_p$. If $H^2(Q,\Z_p)=\{\textbf{1}\}$ then $E\cong Q\times_\textbf{1} \Z_p$, contradiction. 

(iii) $\Rightarrow$ (i) Let $E$ be a cover of $Q$. According to Theorem \ref{cover of principal}, $E$ is an abelian cover. If $E$ is not trivial, according to Proposition \ref{size of abelian covers}, $Q$ has a connected cover $E'$ of size $|Q| q$ for some prime $q$. By Proposition \ref{rad of covering} we have that $rad(E)=p$, and so $q=p$. Thus $\theta\sim \textbf{1}$ and so $E'\cong Q\times_{\textbf{1}} \Z_p$, contradiction. Then $E$ is trivial and so $Q$ is simply connected according to Corollary \ref{trivial for connected finite}.
%
%
%
\end{proof}


The property of being simply connected for finite principal connected quandles over nilpotent groups can be checked using the prime power decomposition described in Lemma \ref{direct dec}.

\begin{theorem}\label{simply nilpotent iff}
Let $G$ be a finite nilpotent group, $\setof{S_p}{p\,|\, |G|}$ be the set of $p$-Sylow subgroups of $G$ and $Q=\mathcal{Q}(G,f)$ be a connected quandle. The following are equivalent: 
 \begin{enumerate}
     \item[(i)]  $\mathcal{Q}(G,f)$ is simply connected.
     \item[(ii)] $\mathcal{Q}(S_p,f|_{S_p})$ is simply connected for every $p$ dividing $|G|$. 
 \end{enumerate}
 
\end{theorem}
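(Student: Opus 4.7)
Both directions will be handled via Theorem~\ref{H2 abelian}(iii)---a finite quandle is simply connected iff every connected cover is isomorphic to it---together with Theorem~\ref{simply pn} applied to the $p$-components.

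For $(i) \Rightarrow (ii)$, I argue by contrapositive: if some $Q_p$ admits a nontrivial connected cover, then by Theorem~\ref{simply pn} there is such a cover $E_p$ of size $p|S_p|$. Propositions~\ref{cover of affine} and \ref{rad of covering} show $E_p$ is principal over a $p$-group $H_p$; write $E_p \cong \mathcal{Q}(H_p, g_p)$. Set $E := E_p \times \prod_{r \neq p} Q_r$; this is a principal quandle over $H_p \times \prod_{r \neq p} S_r$, and the product of the covering $E_p \to Q_p$ with the identities on the $Q_r$ is a covering map $E \to Q$ of size $|E| = p|Q| > |Q|$. Connectedness of $E$ follows from a Goursat-type argument: the subgroup of $H_p \times \prod_{r \neq p} S_r$ generated by the displacements surjects onto each direct factor (since $E_p$ and each $Q_r$ are connected), and pairwise coprimality of $|H_p|$ and the $|S_r|$ for $r \neq p$ forces this subgroup to be the whole product. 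Thus $Q$ has a connected cover not isomorphic to itself, contradicting simply-connectedness.

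For $(ii) \Rightarrow (i)$, assume each $Q_p$ is simply connected and suppose for contradiction that $Q$ admits a connected cover $E$ with $|E| > |Q|$. Combining Theorem~\ref{cover of principal}, Proposition~\ref{size of abelian covers}, and Proposition~\ref{rad of covering} yields a connected cover $E' \to Q$ of size $p|Q|$ for some prime $p$ dividing $|G|$. Proposition~\ref{cover of affine} and Lemma~\ref{direct dec} then give a decomposition $E' = \prod_r E'_r$ with $|E'_p| = p|S_p|$, $|E'_r| = |S_r|$ for $r \neq p$, and each $E'_r$ connected. My goal is to show that the covering map $\pi' \colon E' \to Q$ splits as a product $\prod_r \tilde{\pi}_r$ with $\tilde{\pi}_r \colon E'_r \to Q_r$; then $\tilde{\pi}_p$ is a connected cover of $Q_p$ of size $p|S_p|$, contradicting simply-connectedness of $Q_p$ via Theorem~\ref{simply pn}.

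The core obstacle is this decomposition. I will show that, for each $s$, the composite $\pi_s \circ \pi' \colon E' \to Q_s$ depends only on the $E'_s$-coordinate. Fixing all but the $t$-th coordinate (for any $t \neq s$) produces a subquandle of $E'$ isomorphic to $E'_t$; its image in $Q_s$ is a connected subquandle with order dividing $|E'_t|$, a power of $p_t$. The key subclaim is that every connected subquandle $T$ of $Q_s = \mathcal{Q}(S_s, f|_{S_s})$ has $p_s$-power order. For this, note that $L_t L_{t'}^{-1}$ acts on $Q_s$ as left multiplication by $tf(t)^{-1}f(t')(t')^{-1} \in S_s$, and this element lies in the stabilizer $K := \{g \in S_s \mid gT = T\}$ because $L_t L_{t'}^{-1}$ preserves $T$. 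Restriction of left multiplication to $T$ gives a homomorphism $K \to \mathrm{Sym}(T)$ whose image is a $p_s$-group and contains $\dis(T)$; transitivity of $\dis(T)$ on $T$ then forces $|T|$ to be a $p_s$-power. Combining the two divisibilities shows the image of the slice in $Q_s$ is a single point, so $\pi_s \circ \pi'$ factors through projection to $E'_s$, defining $\tilde{\pi}_s$. Finally, the covering condition $\ker{\pi'} \leq \lambda_{E'}$ decomposes as $\prod_r \ker{\tilde{\pi}_r} \leq \prod_r \lambda_{E'_r}$, so each $\tilde{\pi}_r$ is a covering map and the case $r = p$ delivers the desired contradiction.
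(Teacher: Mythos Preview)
Your proof is correct, but both directions take routes different from the paper's, and in the second direction you work noticeably harder than necessary.

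For $(i)\Rightarrow(ii)$ your construction is actually cleaner than the paper's. The paper builds the non-connected cover $E_i\times\bigl(\prod_{j\neq i}Q_j\times_{\mathbf{1}}S\bigr)$ and then extracts an orbit, using divisibility to see the orbit is too large. You instead form $E_p\times\prod_{r\neq p}Q_r$ directly and argue it is connected. This works; in fact your Goursat argument is overkill, since for any quandles $\dis(A\times B)$ contains $\dis(A)\times\dis(B)$ (take $a=a'$ or $b=b'$ in $L_{(a,b)}L_{(a',b')}^{-1}$), so a product of connected quandles is always connected without any coprimality.

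For $(ii)\Rightarrow(i)$ the paper's argument is considerably shorter. Once one knows (Proposition~\ref{cover of affine}) that the connected cover $E$ is principal, say $E\cong\mathcal{Q}(G',f')$ with $G\cong G'/H$, the covering map $E\to Q$ is induced by the group surjection $G'\to G$. Since both groups are nilpotent, this surjection carries the $p$-Sylow $S_p'$ of $G'$ onto $S_p$, and hence restricts to a covering $\mathcal{Q}(S_p',f'|_{S_p'})\to Q_p$. Comparing sizes finishes immediately. Your route---proving that every connected subquandle of $\mathcal{Q}(S_s,f|_{S_s})$ has $p_s$-power order, then using slices to show $\pi_s\circ\pi'$ is constant in the $t$-th coordinate for $t\neq s$---is valid and self-contained, but it rediscovers in combinatorial terms what the group homomorphism $G'\to G$ gives for free. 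The subquandle-order lemma is a nice observation on its own, though, and could be useful elsewhere.
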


 \begin{proof}
Let $G=p_1^{k_1}\cdots p_t^{k_t}$ and $Q_i=\mathcal{Q}(S_{p_i},f|_{S_{p_i}})$. Note that $Q_i$ is the orbit of $1$ under the canonical left action of $S_{p_i}$. According to Lemma \ref{direct dec} we have that $Q\cong \prod_{i=1}^t Q_i$.

(i) $\Rightarrow$ (ii) Assume that $Q_i$ is not simply connected. Let $E_i=Q_i\times_{\theta} S$ be a connected cover of $Q_i$ with $p^s=|E_i|>|Q_i|=p^t$ for some $s>t$ (see Proposition \ref{rad of covering}). Then $E=E_i\times \left(\prod_{j\neq i} Q_j\times_{\textbf{1}} S\right)$ is a cover of $Q$. Then $E'=x^{\lmlt(E)}$ maps onto both $E_i$ and $Q$ (see Corollary \ref{cor 1}). Therefore both $|E_i|$ and $|Q|$ divide $|E'|$. Hence $p^{s-t}|Q|$ divides $|E'|$ and so $|E'|>|Q|$. Hence, we can conclude by Corollary \ref{trivial for connected finite} that $Q$ is not simply connected.
%

(ii) $\Rightarrow$ (i) Assume that $Q_i$ is simply connected for every $i=1\ldots t$. Let $E$ be a connected cover of $Q$. According to Lemma \ref{connected cov p^n are finite} $E$ is finite. According to Proposition \ref{cover of affine} $E$ is principal and so $E\cong \mathcal{Q}(G',f)$ for some finite group $G'$ such that $G=G'/H$ for some subgroup $H'\leq G'$. We just need to show that $|E|=|Q|$ (see Corollary \ref{trivial for connected finite}). 

According to Proposition \ref{rad of covering} $\rad(|E|)=\rad(|Q|)$. The quandle $E$ is the direct product of principal quandles over the $p_i$-Sylow subgroups of $G'$ (using again Lemma \ref{direct dec}). If $h\in S_{p_i}'$ then $[h(x)]=\pi_\alpha(h)([x])$, so the canonical map $x\mapsto [x]$ from $E$ to $Q$ maps the orbits of $S_{p_i}'$ onto the orbits of $S_{p_i}$. Hence, we have that $E_i=\mathcal{Q}(S_{p_i}',f'|_{S_{p_i}'})$ is mapped onto $Q_i$. So $E_i$ is a connected cover of $Q_i$. Since $Q_i$ is simply connected then $|E_i|=|Q_i|$ for every $i=1, \ldots, t$ by virtue of Theorem \ref{H2 abelian}. Therefore $|E|=|Q|$. 
%
%
 %
   %
%
\end{proof}

As a corollary of the results of this section we obtain a restriction on size of $2$-step solvable latin quandles.

\begin{corollary}
    Let $Q$ be a finite $2$-step solvable latin quandle. Then $\rad(|Q|)=\rad(|\dis(Q)|)$.
\end{corollary}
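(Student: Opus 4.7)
Set $G = \dis(Q)$ and $B = \gamma_1(G)$. Since $Q$ is latin, for any $x, y \in Q$ the element $z = y/x$ satisfies $L_z(x) = z*x = y$, so $G$ is transitive on $Q$ and $|Q|$ divides $|G|$; in particular $\rad(|Q|)$ divides $\rad(|G|)$. Moreover, by \cite[Theorem 1.2]{CP} the quandle $Q$ is $2$-step solvable precisely when $G$ is metabelian, so $B$ is abelian.

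For the reverse divisibility the plan is first to pin the stabilizer $G_x$ inside $B$, and then run a Sylow argument. The factor $Q/\gamma_Q$ is a homomorphic image of a finite latin quandle, hence itself latin; it is connected and abelian by the very definition of $\gamma_Q$, and therefore affine over its displacement group, which by Proposition \ref{gamma for quandles} equals $\dis(Q)/\dis^{\gamma_Q} = G/B$. For an affine quandle $\aff(G/B,(\widehat{L_x})_B)$ to be latin the operator $1-(\widehat{L_x})_B$ must be invertible, in which case $G/B$ acts regularly on $Q/\gamma_Q$; hence the $G$-stabilizer of the block $[x]_{\gamma_Q}$ is exactly $B$ and $G_x \leq B$.

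Now assume, toward a contradiction, that some prime $p$ divides $|G|$ but not $|Q|$. From $|G| = |Q|\,|G_x|$ we get $p \mid |G_x|$, and from $|Q| = [G:B]\,[B:G_x]$ we see that $p$ divides neither $[G:B]$ nor $[B:G_x]$; hence the Sylow $p$-subgroup $B_p$ of $B$ is contained in $G_x$. Since $B$ is abelian, $B_p$ is characteristic in $B$; since $B$ is normal in $G$, $B_p$ is normal in $G$. Thus for every $y = h(x) \in Q$ we have $B_p = hB_ph^{-1} \leq hG_xh^{-1} = G_y$, so $B_p$ lies in every point stabilizer of the faithful action of $G$ on $Q$, and therefore $B_p = 1$, contradicting $p \mid |G_x|$.

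The main obstacle sits in the second paragraph: one must convert the latin hypothesis on $Q/\gamma_Q$ into regularity of the $G/B$-action so as to squeeze $G_x$ into the abelian normal subgroup $B$. Once that containment is available, the Sylow-and-normality closing move is essentially automatic.
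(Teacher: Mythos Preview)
Your proof is correct and takes a genuinely different route from the paper's. Both arguments start the same way (latin implies $\dis(Q)$ transitive, so $\rad(|Q|)\mid\rad(|\dis(Q)|)$), but diverge for the reverse divisibility.

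The paper's argument works with the relative displacement group $\dis_{\gamma_Q}$ (not $\dis^{\gamma_Q}=\gamma_1(G)$): since $\gamma_Q$ is an abelian congruence and $Q$ is latin, $\dis_{\gamma_Q}$ acts regularly on each block $[x]$, so every prime dividing $|\dis_{\gamma_Q}|$ divides $|[x]|$ and hence $|Q|$. For the complementary factor $|G/\dis_{\gamma_Q}|$ the paper invokes the covering machinery developed earlier: $Q_{\dis_{\gamma_Q}}=\mathcal{Q}(G/\dis_{\gamma_Q},f)$ is a connected cover of the affine (hence principal over an abelian group) quandle $Q/\gamma_Q$, and Proposition~\ref{rad of covering} forces $\rad(|G/\dis_{\gamma_Q}|)=\rad(|Q/\gamma_Q|)$.

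Your approach bypasses covers entirely. You use the full strength of \cite[Theorem 1.2]{CP} (solvable length of $Q$ bounds the derived length of $\dis(Q)$) to get $B=\gamma_1(G)$ abelian, then exploit the regularity of $G/B$ on the affine quotient $Q/\gamma_Q$ to trap $G_x$ inside $B$, and finish with a clean Sylow/characteristic-subgroup/faithfulness argument. This is more elementary and entirely group-theoretic once the containment $G_x\le B$ is in hand. Two minor remarks: the sentence ``$L_z(x)=y$'' shows $\lmlt(Q)$ is transitive, and one should add (as is standard) that $\dis(Q)$ has the same orbits as $\lmlt(Q)$; and the regularity of $G/B$ on $Q/\gamma_Q$ already follows from $Q/\gamma_Q$ being a connected \emph{abelian} quandle (so $\dis$ is abelian and semiregular, hence regular), so the detour through latinness and invertibility of $1-(\widehat{L_x})_B$ is not strictly needed.
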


\begin{proof}
Let $G=\dis(Q)$ and $\alpha=\gamma_Q$. Clearly $\rad(|Q|)$ divides $\rad(|G|)$ since $|G|=|Q||G_x|$. Let $p$ be a prime dividing $|G|=|G/\dis^\alpha||\dis^\alpha|$. The quandle $Q$ is latin, so the blocks of $\alpha$ are connected and then $\alpha=\mathcal{O}_{\dis_\alpha}$ \cite[Lemma 2.3]{Nilpotent}. The congruence $\alpha$ is abelian and so $\dis_\alpha\leq \dis^\alpha$ is regular on $[x]$. Thus $|\dis_{\alpha}|$ divides a power of $|[x]|$ according to \cite[Lemma 1.4(ii)]{LSS}. If $p$ divides $|\dis^\alpha|$ then it divides $|[x]|$ and so $p$ divides $|Q|=|Q/\alpha||[x]|$. The quandle $Q_{\dis_\alpha}$ is a connected cover of $Q/\alpha$ (see Lemma \ref{Q_N}) and $|Q_{\dis_\alpha}|=|G/\dis^\alpha|$. According to Proposition \ref{rad of covering} $ \rad(|Q_{\dis_\alpha}|)=\rad(|Q/\alpha|)$. If $p$ divides $|G/\dis_\alpha|$ then $p$ divides $|Q/\alpha|$ and so it divides $|Q|$. Therefore $\rad(|G|)$ divides $\rad(|Q|)$.
\end{proof}
\subsection{Simply connected latin quandles of size \texorpdfstring{$p^2$}{p^2} and \texorpdfstring{$p^3$}{p^3}}

We are going to exploit Theorem \ref{simply pn} in order to classify simply connected quandles of size $p^2$ and $p^3$. The following strategy can be used to obtain the classification of simply connected quandles of size $p^n$ provided the classification of the groups of size $p^n$ and $p^{n+1}$ and their automorphisms. We display the algorithm in full generality assuming that such information is available. The idea works as follows:


\begin{itemize}
    \item[(i)] Let $Q=\mathcal{Q}(G,f)$ be a connected principal quandle of size $p^{n}$. 

    \item[(ii)] Let $G'$ be a group of size $p^{n+1}$, $f\in \aut{G'}$ and $Q'=\mathcal{Q}(G',f)$. Using Proposition \ref{coset reps quotient} and \cite[Corollary 5.31]{GB} we have that $Q$ is isomorphic to a factor of $Q'$ if and only if $G\cong G'/H$ for some $H\leq G'$ of size $p$ and such that $f'_H$ is conjugate to $f$. 
    The quandle $Q'$ is a covering of $Q$ if and only if $H\leq Z(G')\cap Fix(f)$. Indeed the kernel of the map $$p:Q'\longrightarrow Q, \quad x\mapsto xH$$ is contained in $\lambda_{Q'}$ if and only if the blocks are projection, namely $H=[1]_{\ker{p}}\leq Fix(L_1)=Fix(f)$. Moreover, according to \cite[Corollary 3.3]{covering_paper} $\dis^{\lambda_{Q'}}\leq Z(\lmlt(Q))\cap \dis(Q)\leq Z(\dis(Q))$.
    
    According to \cite[Lemma 2.2]{GB} we have also that $Fix(f)\leq \gamma_1(G)$. Hence $H\leq Z(G)\cap \gamma_1(G)\cap Fix(f)$. Finally, $Q'$ is connected if and only if $Fix(f_{\Phi(G')})=1$ (see Corollary 2.8 of \cite{GB}).

    \item[(iii)] If a pair $(G',f)$ satisfying the properties listed in the previous steps exists then $Q$ is not simply connected, otherwise $Q$ is simply connected (see Theorem \ref{simply pn}).


\end{itemize}

Note that the property of being simply connected for latin quandles can be also checked by using the necessary condition stated in Proposition \ref{simply factor}. Indeed, given $Q$ a latin quandle of size $p^n$ and the list of simply connected latin quandles of size $p^{k}$ for $k\leq n-1$, if $Q$ has a factor that is not in the list (up to isomorphism), then $Q$ is not simply connected. 

   The groups of size $p^{k}$ for $k=3,4$ and their automorphisms are classified in \cite{tedesco} and we collected the data we need in Section \ref{appendix}. Moreover connected quandles of size $p^2$ and $p^3$ are classified in \cite{Hou} and \cite{GB}. So we can apply the algorithm above for the case $p^2$ and $p^3$.

According to one of the main resuls of \cite{MeAndPetr}, affine quandles over cyclic groups are simply connected, so from now on we can omit that case.

\begin{proposition}\cite[Theorem 1.1]{MeAndPetr}\label{cyclic are simply}
   Let $Q = \aff(\Z_m,f)$ be a connected quandle. Then $Q$ is simply connected.
\end{proposition}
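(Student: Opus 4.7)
The plan is to reduce to the prime-power case via Theorem~\ref{simply nilpotent iff}, apply Theorem~\ref{simply pn} to restrict the class of potential covers, and then close with the classical observation that a group with cyclic abelianization and central commutator subgroup is itself abelian.

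First, write $m=p_1^{k_1}\cdots p_r^{k_r}$. The decomposition $\Z_m\cong\prod_i\Z_{p_i^{k_i}}$ is into $f$-invariant Sylow subgroups (any automorphism of a finite cyclic group preserves each Sylow component), so by Lemma~\ref{direct dec} we have $\aff(\Z_m,f)\cong\prod_i\aff(\Z_{p_i^{k_i}},f|_{\Z_{p_i^{k_i}}})$. By Theorem~\ref{simply nilpotent iff} it suffices to show each factor is simply connected, so I reduce to the case $m=p^k$.

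Next, let $Q=\aff(\Z_{p^k},f)$ be connected. By Theorem~\ref{simply pn} it is enough to rule out connected covers of $Q$ of size $p^{k+1}$. Suppose $Q'$ were such a cover. By Theorem~\ref{cover of principal} it is principal, say $Q'=\mathcal{Q}(G',f')$ with $|G'|=p^{k+1}$. Running the algorithm sketched in the paragraph preceding the statement produces a subgroup $H\leq G'$ of order $p$ with $G'/H\cong\Z_{p^k}$ and $H\leq Z(G')\cap Fix(f')$. Connectedness of $Q'$ together with Lemma~2.2 of~\cite{GB} forces $Fix(f')\leq\gamma_1(G')$, so $H\leq\gamma_1(G')$.

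The finish is pure group theory. Since $G'/H\cong\Z_{p^k}$ is abelian, $\gamma_1(G')\leq H$; together with $H\leq\gamma_1(G')$ this gives $\gamma_1(G')=H$, a central subgroup of order $p$. Then $G'/\gamma_1(G')$ is cyclic, so picking a lift $g\in G'$ of a generator, every element of $G'$ has the form $g^iz$ with $z\in\gamma_1(G')\subseteq Z(G')$, and any two such expressions commute; hence $G'$ is abelian. Then $\gamma_1(G')=1$, contradicting $|H|=p$. No connected cover of size $p^{k+1}$ exists, so $Q$ is simply connected by Theorem~\ref{simply pn}.

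The only delicate point I anticipate is confirming that the algorithm's inclusion $H\leq Z(G')\cap\gamma_1(G')\cap Fix(f')$ really holds as soon as $Q'$ is a connected cover of $Q$, which requires applying the cited results of~\cite{covering_paper} and~\cite{GB} to $Q'$ rather than to $Q$; once that is secured, the rest is a one-line group-theoretic contradiction driven by the cyclicity of $\Z_{p^k}$.
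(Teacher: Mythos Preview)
Your proof is correct. The paper does not prove this proposition—it is imported from \cite[Theorem 1.1]{MeAndPetr}, where the original argument is a direct analysis of normalized cocycles (the machinery behind Proposition~\ref{normalized cocycles}). Your route is genuinely different: you use the structural results developed in \emph{this} paper (Theorems~\ref{simply pn} and~\ref{simply nilpotent iff} together with the group-theoretic skeleton of the $p^n$ algorithm) to reduce everything to the elementary fact that a group with cyclic abelianization is abelian. This is a nice demonstration that the present framework recovers the cyclic case without appeal to \cite{MeAndPetr}. The delicate point you flag does resolve cleanly: for a connected principal quandle $Q'=\mathcal{Q}(G',f')$ one has $\dis(Q')\cong G'$ with trivial point stabilizers, so Proposition~\ref{coset reps quotient} identifies $H$ with $\dis^\alpha$ for the covering congruence $\alpha\leq\lambda_{Q'}$; then $\dis^\alpha\leq\dis^{\lambda_{Q'}}\leq Z(\lmlt(Q'))$ by \cite[Corollary 3.3]{covering_paper} gives $H\leq Z(G')\cap Fix(\widehat{L_x})$, and \cite[Lemma 2.2]{GB} supplies $H\leq Fix(f')\leq\gamma_1(G')$. (In fact that last citation is dispensable: $\gamma_1(G')\leq H$ already forces $|\gamma_1(G')|\in\{1,p\}$, and if $G'$ were abelian then connectedness of the finite affine quandle $\aff(G',f')$ would force $Fix(f')=1$, contradicting $|H|=p$.)
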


Let us start with the case $p^2$. Non-faithful quandles of order $p^3$ are classified. If $Q$ is one of such quandles, then $Q/\lambda_Q$ have size $p^2$ (quandles of size $p$ are simply connected, see Proposition \ref{cyclic are simply}). 

Let us list the connected quandles of size $p^2$ such that the displacement group is not cyclic as provided in \cite{Hou} in Table \ref{affine p^2} and the principal non-faithful quandles of size $p^3$ in Table \ref{Tab2} (as given in Table 1 of \cite{GB}).

\begin{table}[!htb]
    \centering
    \caption{Connected affine quandles over $\Z_p^2$ where $p$ is prime are of the form $\aff(\Z_p^2,f)$ where $f$ is one of the following:}
    \begin{tabular}{|l|l|}
        \hline
        $f$ & Parameters \\
        \hline
         $D(b,c)=\begin{bmatrix} b & 0 \\0 & c \end{bmatrix}$ & $1 < b \leq c < p$. \\
        \hline
         $G(b)=\begin{bmatrix} b & 1 \\ 0 & b \end{bmatrix}$ & $1< b< p$. \\
        \hline
        $H(q)=\begin{bmatrix} 0 & 1 \\ -b_0 & -b_1 \end{bmatrix}$ & $q=x^2 + b_1x + b_0$  is an irreducible polynomial over $\Z_p$. \\
          \hline
    \end{tabular}   \label{affine p^2}
    \end{table}

\begin{table}[!htb]
\caption{The principal non faithful of size $p^3$ with $p> 3$ are of the form $\mathcal{Q}(\Z_p^2\rtimes \Z_p,f)$ where $f$ is one of the following:}
\begin{tabular}{|l|l|l|c|c|}
\hline
$f$  & Parameters    \\
 \hline
 $\widetilde{D}(b,b^{-1}):\, a_1\mapsto a_1^{b}$, \quad  $a_2\mapsto a_2^{b^{-1}}$ & $0<b\leq  b^{-1} <p-1$. \\
 \hline
 $\widetilde{G}(-1):\, a_1\mapsto a_1^{-1}$, \,\,  $a_2\mapsto a_1 a_2^{-1}$ &   \\
 \hline
  $\widetilde{H}(q):\, a_1\mapsto a_2$, \quad  $a_2\mapsto  a_1^{-1} a_2^{-b}$ & $q=x^2+bx+1$ is an irreducible polynomial over $\Z_p$.   \\
 \hline
\end{tabular}\label{Tab2}

\end{table}

\begin{theorem}\label{simply p2}
Let $Q=\aff(\Z_p^2,f)$ be a connected quandle. Then $Q$ is simply connected if and only if $f$ is one of the following: 
\begin{itemize}
 \item[(i)]  $D(a,b)$ where $ab\neq 1 \pmod{p}$.
 \item[(ii)] $G(c)$, where $c\neq - 1 \pmod{p}$.
 \item[(iii)] $H(q)$, where $q=x^2+b_1x+b_0$ with $b_0\neq 1$. 


\end{itemize}

\end{theorem}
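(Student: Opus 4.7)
The plan is to invoke Theorem \ref{simply pn}: a finite principal quandle of size $p^2$ is simply connected if and only if it has no connected cover of size $p^3$. By Theorem \ref{cover of principal}, every connected cover of $Q$ is principal; and since a covering map $E \to Q$ of size $p^3$ produces a nontrivial congruence of $E$ contained in $\lambda_E$, any such $E$ must be non-faithful. The candidate covers are therefore precisely the principal, non-faithful, connected quandles of size $p^3$, which for $p>3$ are the three families in Table \ref{Tab2}.

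Following the algorithm in step (ii) of the preceding paragraph, a quandle $\mathcal{Q}(G',f')$ from Table \ref{Tab2} covers $Q = \aff(\mathbb{Z}_p^2,f)$ precisely when $\mathbb{Z}_p^2 \cong G'/H$ for some $H \le Z(G')\cap \gamma_1(G')\cap \mathrm{Fix}(f')$ of order $p$, with the induced automorphism $f'_H$ conjugate to $f$ in $GL_2(\mathbb{Z}_p)$. Since $G' = \mathbb{Z}_p^2\rtimes \mathbb{Z}_p$ is the Heisenberg group of order $p^3$, both $Z(G')$ and $\gamma_1(G')$ coincide with the unique subgroup of order $p$, so $H$ is forced and the task reduces to computing the $2\times 2$ matrix induced by $f'$ on the abelianization $G'/\gamma_1(G')\cong \mathbb{Z}_p^2$.

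I would then read off this induced matrix in each of the three families. For $\widetilde{D}(b,b^{-1})$ one gets the diagonal matrix $D(b,b^{-1})$. For $\widetilde{G}(-1)$ one obtains $\mm{-1}{1}{0}{-1}$, which is $G(-1)$. For $\widetilde{H}(q)$ with $q=x^2+bx+1$ the induced matrix has characteristic polynomial $q$ and is non-scalar, so it is $GL_2(\mathbb{Z}_p)$-conjugate to the companion matrix $H(q)$ of Table \ref{affine p^2}, which has constant term $b_0=1$. Consequently $Q$ admits a connected cover of size $p^3$ exactly when $f$ is conjugate to $D(b,b^{-1})$ (equivalently $ab\equiv 1\pmod p$), to $G(-1)$, or to an $H(q)$ with $b_0=1$; taking complements gives the stated classification.

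The main obstacle is the bookkeeping in the $\widetilde{G}$ and $\widetilde{H}$ cases: one must check that $f'$ fixes the generator of $Z(G')$, so that $f'$ genuinely descends to the abelianization, and that the induced matrix is $GL_2(\mathbb{Z}_p)$-conjugate to the normal form in Table \ref{affine p^2}. Both are short computations from the Heisenberg commutator identity $[a_1^m a_2^n,\, a_1^r a_2^s] = [a_1,a_2]^{ms-nr}$ and from the rational canonical form for $2\times 2$ matrices.
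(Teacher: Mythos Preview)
Your proposal is correct and follows essentially the same route as the paper. Both arguments invoke Theorem~\ref{simply pn}, identify the possible connected covers of size $p^3$ as the non-faithful principal quandles in Table~\ref{Tab2}, and then compute the automorphism induced on $G'/Z(G')\cong\Z_p^2$ to determine which $f$ in Table~\ref{affine p^2} are covered. The only presentational difference is that the paper packages the case analysis via the observation that any automorphism of the Heisenberg group acts on its center by raising to the power $\det(f_{Z(G)})$; since the center must lie in $\mathrm{Fix}(f')$, this forces $\det(f_{Z(G)})\equiv 1\pmod p$, which immediately yields $ab=1$, $c^2=1$, and $b_0=1$ in the three families without computing the induced matrices individually.
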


\begin{proof}

Let $G=\Z_p^2\times \Z_p$, $A=\Z_p^2$ and $$\psi:G\longrightarrow A,\quad x\mapsto xZ(G).$$
It is easy to see that: 

\begin{itemize}
    \item $\psi$ is a covering map from $\mathcal{Q}(G,\widetilde{D}(b,b^{-1}))$ to $\aff(\Z_p^2,D(b,b^{-1}))$.
    
    \item $\psi$ is a covering map from $\mathcal{Q}(G,\widetilde{G}(-1))$ to $\aff(\Z_p^2,G(-1))$.

    \item $\psi$ is a covering map from $\mathcal{Q}(G,\widetilde{H}(q))$ to $\aff(\Z_p^2,H(q))$ if $q=x^2+bx+1$ is an irreducible polynomial over $\Z_p$.
\end{itemize}

The non-faithful quandles of order $p^3$ are principal of the form $Q=\mathcal{Q}(\Z_p^2\rtimes \Z_p,f)$ as in Table \ref{Tab2} where $Z(G)=\gamma_1(G)=Fix(f)$ and $G/Z(G)=\Z_p^2$. So $Q/\lambda_Q\cong \aff(\Z_p^{2},f_{Z(G)})$ and $f_{Z(G)}$ is conjugate to one of the automorphisms in Table \ref{affine p^2}. Since we have that $f(z)=z^a$ where $a=\det(f_{Z(G)}\pmod{p}$ for every $f\in \aut{G}$ we have that $a=1$. The determinant is invariant under conjugation, so:
\begin{itemize}
    \item if $f_{Z(G)}$ is conjugate to $D(a,b)$ then $ab=1\pmod{p}$.
    
    \item if $f_{Z(G)}$ is conjugate to $G(c)$ then $c=-1\pmod{p}$ ($c^2=1$ and $c\neq 1$).

    \item if $f_{Z(G)}$ is conjugate to $H(q)$ then $b_0=1$.
\end{itemize}
So the statement follows.
\end{proof}

Note that Proposition \ref{simply factor} does not hold for simply connected quandles in general. Indeed the quandle {\tt SmallQuandle(27,1)} in the \cite{RIG} database of connected quandles is simply connected and not latin, and it has a factor isomorphic to $\aff(\Z_p^2,-1)$ that is not simply connected according to Theorem \ref{simply p2}.

  

We now consider the case $p^3$. In the following we denote by $G_i$ the group of size $p^4$ for $i=1,\ldots ,15$ following \cite{tedesco} as described in Section \ref{appendix}. Let us first list the connected affine quandles of size $p^3$ such that the displacement group is not cyclic in Tables \ref{p^3 not elementary} and \ref{p^3 elementary} (such quandles are given in \cite{Hou}).

    \begin{table}[!ht] 
    \centering
 \caption{Connected affine quandles over $\Z_{p^2}\times \Z_p$ where $p$ is prime are of the form $\aff(\Z_{p^2}\times \Z_p,f)$ where $f$ is one of the following:}  
    \begin{tabular}{|l|l|}
        \hline
               $f$ & Parameters \\
\hline
         $D(b,c)=\begin{bmatrix} b & 0\\ 0& c \end{bmatrix}$ & $0<b<p^2-1, b\neq  0,1 \pmod{p}$ and $1<c<p$. \\
        \hline
         $G(b)=\begin{bmatrix} b & 0 \\ 1 & b \end{bmatrix}$ & $1 < b < p$. \\
        \hline
  $H(b,c)=\begin{bmatrix} b & p \\ c & b \end{bmatrix}$ & $ 0 < b < p$ and $0  \leq c< p-1 $. \\
        \hline
\end{tabular} \label{p^3 not elementary}
\end{table}

        \begin{table}[!ht]  
    \centering
     \caption{Connected affine quandles over $\Z_p^3$ where $p$ is prime are of the form $\aff(\Z_p^3,f)$ where $f$ is one of the following:}
    \begin{tabular}{|l|l|}
    \hline

        $f$ & Parameters \\
\hline

             $D(b,c,d)=\begin{bmatrix} b & 0&0\\ 0& c &0 \\ 0 &0 & d \end{bmatrix}$ &$ 1 < b \leq c \leq d < p$. \\
        \hline
         $G(b,c)=\begin{bmatrix} b & 1 & 0\\ 0& b & 0 \\ 0& 0& c \end{bmatrix}$ &$ 1<b,c<p$. \\
        \hline 
        $F(b)=\begin{bmatrix} b & 1 & 0 \\ 0& b & 1 \\ 0& 0& b \end{bmatrix}$&$ 1<b<p$. \\
        \hline
        $H(q)=\begin{bmatrix} 0 & 1 & 0 \\ 0 & 0 & 1 \\ -b_0 & -b_1 & -b_2 \end{bmatrix}$ &$ q=x^3 + b_2x^2 + b_1x + b_0$ is an irreducible polynomial over $\Z_p$. \\
        \hline
        $H(q,c)=\begin{bmatrix} 0 & 1 & 0 \\ -b_0 & -b_2 & 0 \\ 0 & 0 & c \end{bmatrix}$ &$q= x^2 + b_1x + b_0$ is an irreducible polynomial over $\Z_p$ and $1<c<p$. \\
        \hline
    \end{tabular}\label{p^3 elementary}
\end{table}

Now we compute the affine simply connected quandles of size $p^3$.
\begin{lemma}   \label{p^3 p^2 x p}
    Let $Q=\aff(\Z_{p^2}\times \Z_p,f)$ be a connected quandle. Then $Q$ is simply connected if and only if $f$ is one of the following:
\begin{enumerate}

    \item[(i)] $D(b,c)$
    for $bc\neq 1 \pmod{p}$. 
     \item[(ii)] $G(b)$ 
        for $b\neq -1 \pmod{p}$.
 \item[(iii)] $H(q)$ 
    for $b\neq -1\pmod{p}$. 
 \end{enumerate}

\end{lemma}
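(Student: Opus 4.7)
The plan is to apply Theorem \ref{simply pn}, which reduces simple connectedness of $Q$ to the non-existence of a connected cover of size $p^4$, and then invoke the algorithm described before Proposition \ref{cyclic are simply} to enumerate the possible covers. By Proposition \ref{cover of affine} any connected cover $E$ of $Q$ is principal, so $E \cong \mathcal{Q}(G',f')$ for some group $G'$ of order $p^4$, and the algorithm requires a normal subgroup $H \trianglelefteq G'$ of order $p$ with $H \leq Z(G') \cap \gamma_1(G') \cap \mathrm{Fix}(f')$, $G'/H \cong \Z_{p^2}\times \Z_p$ with $f'_H$ conjugate to $f$, and $\mathrm{Fix}(f'_{\Phi(G')})=1$. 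Because $G'/H$ is abelian we have $\gamma_1(G') \leq H$, forcing $H = \gamma_1(G')$.

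Next I would use the classification in Section \ref{appendix} to list the groups $G'$ of order $p^4$ with $|\gamma_1(G')|=p$ and abelianization $\Z_{p^2}\times \Z_p$, and for each such $G'$ examine the admissible $f' \in \aut{G'}$ (those fixing $\gamma_1(G')$ pointwise and acting fixed-point-freely on the Frattini quotient). For each admissible pair $(G',f')$ the induced automorphism on $G'/\gamma_1(G')\cong\Z_{p^2}\times\Z_p$ would be computed and matched against the conjugacy classes listed in Table \ref{p^3 not elementary}.

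The structural heart of the argument is the commutator identity $f'([x,y])=[f'(x),f'(y)]$: combined with $f'|_{\gamma_1(G')}=\mathrm{id}$, it shows that $f'_H$ preserves the bilinear commutator pairing $G'/Z(G') \times G'/Z(G') \to \gamma_1(G') \cong \Z_p$, which in matrix form amounts to $\det(f'_H) \equiv 1 \pmod p$. The determinants modulo $p$ of the three families are $bc$ for $D(b,c)$ and $b^2$ for both $G(b)$ and $H(b,c)$, yielding the necessary conditions $bc\equiv 1 \pmod p$ and $b\equiv -1 \pmod p$, which are precisely the cases excluded from the lemma's statement. For each excluded case I would then exhibit an explicit extension $(G',f')$ realising a connected cover of $Q$ of size $p^4$, which mirrors the strategy used in Theorem \ref{simply p2}: the candidate $G'$ is a central extension of $\Z_{p^2}\times \Z_p$ by $\Z_p$ whose commutator structure produces the required matrix type.

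I expect the main obstacle to be the bookkeeping over the $15$ isomorphism classes of groups of order $p^4$ and their automorphism groups. Structurally, the determinant observation is clean; concretely, however, identifying the relevant $G_i$ from the appendix, verifying $\mathrm{Fix}(f'_{\Phi(G')})=1$ in each constructed cover, and checking that every admissible $f'_H$ actually realises the claimed matrix form (up to conjugation in $\aut{\Z_{p^2}\times\Z_p}$) requires careful case analysis.
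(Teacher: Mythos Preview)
Your plan is essentially the same as the paper's, and the core logic is sound. The paper also reduces via Theorem \ref{simply pn} to the existence of a connected cover $E=\mathcal{Q}(G',g)$ of size $p^4$, identifies the unique candidate group $G'=G_3$ via Lemma \ref{which groups}, and then reads off from the explicit automorphism description in Lemma \ref{what auto?} that the induced map $g_{\mathrm{Fix}(g)}$ on $A=\Z_{p^2}\times\Z_p$ reduces modulo $p$ to an upper-triangular matrix with diagonal $(u_1,v_2)$ satisfying $u_1v_2\equiv 1\pmod p$. The determinant comparison with each family in Table \ref{p^3 not elementary} is then exactly the argument you outline, and the converse is handled by exhibiting explicit parameter choices producing connected covers.

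Your commutator-pairing argument is a genuinely cleaner justification of the constraint $\det\equiv 1\pmod p$ than the paper's, which simply reads it off the tables. It is worth noting why it works without knowing $G'=G_3$ in advance: since $\gamma_1(G')$ has order $p$ and $G'$ has class $2$, one has $[x^p,y]=[x,y]^p=1$, so $G'^p\le Z(G')$ and $G'/Z(G')$ is elementary abelian; combined with $|Z(G')|=p^2$ (from non-degeneracy of the alternating form) this forces $Z(G')/H=\Phi(A)$, so the symplectic constraint on $G'/Z(G')$ is precisely the determinant condition on $f_{\Phi(A)}$. This step is implicit in your write-up and should be made explicit.

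Where your sketch remains incomplete is exactly where you anticipate: the sufficiency direction requires exhibiting, for each excluded $f$, an explicit $f'\in\aut{G_3}$ with $\mathrm{Fix}(f')=\langle a_3\rangle$, $\mathrm{Fix}(f'_{\Phi(G_3)})=1$, and $f'_{\langle a_3\rangle}$ conjugate to $f$. The paper does this case by case (e.g.\ for $D(b,c)$ with $bc\equiv 1$ taking $u_1=b$, $v_2=c$, $u_2=u_4=v_4=0$). This is routine once $G_3$ is identified, and Lemma \ref{which groups} spares you most of the bookkeeping over the fifteen groups you were worried about.
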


\begin{proof}
Let $A=\Z_{p^2}\times \Z_p$ and $Q=\aff(A,f)$. The quandle $Q$ is not simply connected if and only if $Q$ has a connected principal cover of size $p^4$. Let us call such cover by $E$. In particular $E=\mathcal{Q}(G,g)$ with $G/Fix(g)\cong A $, $Fix(g)\leq Z(G)\cap \gamma_1(G)$ and $g_{Fix(g)}$ is conjugate to $f$. According to Lemma \ref{which groups} and the description of the groups of size $p^4$ given in Section \ref{appendix}, $G=G_3$ and $Fix(g)=\langle a_3\rangle$. Moreover, using Lemma \ref{what auto?} we have that 
\begin{align*}
    g_{Fix(g)}=\begin{cases}a_1\mapsto a_1^{u_1} a_2^{u_2} a_4^{u_4}=a_1^{u_1+pu_4} a_2^{u_2} ,\\
    a_2\mapsto a_2^{v_2}a_4^{v_4},=a_2^{v_2}a_1^{pv_4}\\
    a_4\mapsto a_4^{u_1},
    \end{cases}
\end{align*}
for $u_1 v_2=1\pmod{p}$. Note that $g_{\Phi(G)}=f_{\Phi(A)}$ is a uppetriangular matrix with $u_1$ and $v_2$ on the diagonal. We need to compare $g_{Fix(g)}$ with the list in Table \ref{p^3 not elementary} (up to conjugation).
\begin{itemize}
\item Assume that $f$ is conjugate to $D(b,c)$. Then we also have $\aff(A/\Phi(A),f_{\Phi(A)})\cong \aff(\Z_p^2,f_{\Phi(A)})\cong \aff(\Z_p^2,D(b,c)_{\Phi(A)})$. According to \cite[Corollary 5.31]{GB}, $f_{\Phi(A)}$ and $D(b,c)_{\Phi(A)}$ are conjugate, so $$bc=\det(D(b,c)_{\Phi(A)})=\det(f_{\Phi(A)})=u_1 v_2=1\pmod{p}.$$  On the other hand, if $f=D(b,c)$, with $bc=1\pmod{p}$ we can take $u_1=v_2=b$, $u_2=u_4=v_4=0$ to obtain a connected cover of $\aff(A,D(b,c))$.  

\item Assume that $f$ is conjugate to $G(b)$. Then, using the same argument we used in the previous case, necessarily we have that $b^2=v_1 u_2=1\pmod{p}$. On the other hand, if $f=G(b)$ with $b^2=1\pmod{p}$ we can take $u_1=v_2=b$, $u_2=1$, $u_4=v_4=0$ to obtain a connected cover of $\aff(A,G(b))$.

\item Assume that $f$ is conjugate to $H(b,c)$. Then again we have $b^2=v_1 u_2=1\pmod{p}$. On the other hand, if $f=H(b,c)$ with $b^2=1\pmod{p}$ we can take $u_1=v_2=b$, $u_2=c$, $u_4=0$ and $v_4=1$ to obtain a connected cover of $\aff(A,H(b,c))$.  

\end{itemize}
So, the statement follows.
\end{proof}

\begin{lemma}
    Let $Q=\aff(\Z_p^3,f)$ be a latin quandle. Then $Q$ is simply connected if and only if $f$ is one of the following:
\begin{enumerate}
    \item[(i)] $f=D(b_1,b_2,b_3)$ 
    with $b_i b_j \neq 1$ for every $i,j$ and $i\neq j$. 
         \item[(ii)] $f=G(b,c)$ 
            with $b\neq -1$. 
             \item[(iii)] $f=F(b)$
            with $b\neq -1$. 
       
            \item[(iv)] $f=H(q,c)$ 
            where $b_0\neq 1$.

                 \item[(v)] $f=H(q)$. 
 \end{enumerate}

\end{lemma}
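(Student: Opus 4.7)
The plan is to apply Theorem \ref{simply pn}: the latin quandle $Q = \aff(\Z_p^3, f)$ of size $p^3$ is simply connected if and only if it has no connected cover of size $p^4$. Following the strategy of the proof of Lemma \ref{p^3 p^2 x p}, any such cover is a principal quandle $E = \mathcal{Q}(G, g)$ with $|G| = p^4$, $G/Fix(g) \cong \Z_p^3$, $Fix(g) \leq Z(G) \cap \gamma_1(G)$, $g_{Fix(g)}$ conjugate to $f$ in $\aut{\Z_p^3}$, and $Fix(g_{\Phi(G)}) = 1$. Since $G/Fix(g)$ is elementary abelian, $\Phi(G) \leq Fix(g)$, which forces $|Fix(g)| = p$ and $Fix(g) = \Phi(G)$.

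First I would enumerate, using the description of the groups of order $p^4$ in Section \ref{appendix}, those $G$ with $G/\Phi(G) \cong \Z_p^3$ and $\Phi(G) \leq Z(G) \cap \gamma_1(G)$ of order $p$. The only abelian candidate is $G \cong \Z_{p^2} \times \Z_p^2$; here any admissible $g$ induces on $G/\Phi(G)$ a matrix having $1$ as an eigenvalue (coming from the order-$p^2$ direction, whose image modulo $\Phi(G)$ must lie in $a_1 \langle a_2, a_3\rangle$), so $Fix(g_{\Phi(G)}) \neq 1$ and no connected cover arises from an abelian $G$. Hence only the non-abelian groups of order $p^4$ with elementary abelian Frattini quotient contribute. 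For each such $G$ and each admissible $g$, I would compute $g_{\Phi(G)} \in GL_3(\F_p)$ via an analogue of Lemma \ref{what auto?}, discard those with nontrivial fixed subspace, and record the $GL_3(\F_p)$-conjugacy classes that arise.

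Matching case by case against Table \ref{p^3 elementary} should yield the stated conditions. For $f = D(b_1, b_2, b_3)$ the constraint, obtained from a determinant/eigenvalue argument on a $g$-invariant rank-$2$ subspace analogous to the argument for $D(b,c)$ in Lemma \ref{p^3 p^2 x p}, is that some product $b_i b_j$ with $i \neq j$ equals $1 \pmod p$. For $f = G(b, c)$ and $f = F(b)$ the constraint reduces to $b^2 \equiv 1 \pmod p$, whence $b = -1$ (since $b \neq 1$). For $f = H(q, c)$ with $q = x^2 + b_1 x + b_0$ the constraint becomes $b_0 = 1$ on the constant term of the irreducible quadratic factor. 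Finally, for $f = H(q)$ with $q$ an irreducible cubic, no admissible pair $(G, g)$ can realise an irreducible cubic action on $G/\Phi(G)$, so no connected cover exists and $Q$ is always simply connected.

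The main obstacle is the enumeration and case analysis across the non-abelian groups of order $p^4$ with $G/\Phi(G) \cong \Z_p^3$: for each such group one must identify the automorphisms fixing $\Phi(G)$ pointwise, impose $Fix(g_{\Phi(G)}) = 1$, extract the resulting algebraic constraint on $g_{\Phi(G)}$, and compare with each normal form in Table \ref{p^3 elementary}. In the non-simply-connected cases, realizability is then witnessed by an explicit construction of $(G, g)$, mirroring the constructions carried out in the proof of Lemma \ref{p^3 p^2 x p}.
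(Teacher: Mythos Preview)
Your proposal is correct and follows essentially the same route as the paper: apply Theorem~\ref{simply pn}, enumerate the admissible pairs $(G,g)$ of size $p^4$, and compare $g_{Fix(g)}$ against Table~\ref{p^3 elementary}. Two small remarks: your separate treatment of the abelian candidate $\Z_{p^2}\times\Z_p^2$ is unnecessary, since the hypothesis $Fix(g)\leq \gamma_1(G)$ with $|Fix(g)|=p$ already forces $G$ non-abelian; and the paper streamlines the enumeration by invoking Lemma~\ref{which groups} directly, which singles out $G_{12}$ and $G_{13}$ as the only groups to check before carrying out exactly the case comparison you describe.
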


\begin{proof}
As in the proof of Lemma \ref{p^3 p^2 x p}, the quandle $Q$ is not simply connected if and only if there exists a connected cover $E=\mathcal{Q}(G,g)$ such that $G/Fix(g)\cong \Z_{p}^3$, $Fix(g)\leq Z(G)\cap \gamma_1(G)$ and $g_{Fix(g)}$ is conjugate to $f$. Accordingly, $G=G_i$ for $i=12,13$ (see Section \ref{appendix}).     

So, if $i=12$ then $Fix(g)=\langle a_4\rangle$. Moreover, using Lemma \ref{what auto?} we have that 
\begin{align*} 
   g_{Fix(g)}=\begin{cases} a_1 \mapsto a_1^{u_1} a_2^{u_2} a_3^{u_3} , \\
    a_2 \mapsto a_1^{v_1} a_2^{v_2} a_3^{v_3} , \\
    a_3 \mapsto a_3^{w_3}  , 
    \end{cases}
\end{align*}
where $v_2 u_1 - v_1 u_2=1\pmod{p}$ and $u_1+v_2\neq 2$.

Similarly, if $i=13$ then
\begin{align*} 
g_{Fix(g)}=\begin{cases}     
      a_1 \mapsto a_1^{u_1} a_2^{u_2} a_3^{u_3} , \\
      a_2 \mapsto a_2^{v_2} a_3^{v_3} , \\
      a_3 \mapsto a_3^{w_3} , 
    \end{cases}
\end{align*}
where $u_1 v_2=1\pmod{p}$.    Hence comparing such automorphism with the list in Table \ref{p^3 elementary} (up to conjugation) the statement follows as in Lemma \ref{p^3 p^2 x p}.
\end{proof}

Let us consider the non-affine quandles of size $p^3$. The list of such quandles is given in Table \ref{Tab2} and \ref{Tab3}.

\begin{table}[!htb]
\caption{The principal non-affine faithful connected quandles of size $p^3$ with $p> 3$ are of the form $\mathcal{Q}(\Z_p^2\rtimes \Z_p,f)$ where $f$ is one of the following:}
\begin{tabular}{|l|l|l|c|c|}
\hline
 $f$ & Parameters  \\
 \hline $\widetilde{D}(b,c):\, a_1\mapsto a_1^b$, \quad  $a_2\mapsto a_2^c$ & $1< b \leq c< p-1$, $bc \neq 1 \pmod{p}$. \\
 \hline
 $\widetilde{G}(c):\, a_1\mapsto a_1^c$, \quad  $a_2\mapsto a_1 a_2^c$ & $1<c<p-2$.  \\
 \hline
  $\widetilde{H}(q):\, a_1\mapsto a_2$, \quad  $a_2\mapsto  a_1^{-b_0} a_2^{-b_1}$ & $q=x^2+b_1 x+b_0$ is an irreducible polynomial over $\Z_p$ and  $b_0\neq 1$.  \\
 \hline
\end{tabular}\label{Tab3}

\end{table}

\begin{lemma}\label{latin simply non affine}
    Let $Q=\mathcal{Q}(\Z_p^2\rtimes \Z_p,f)$ be a connected quandle. Then $Q$ is simply connected if and only if $f$ is one of the following:
\begin{enumerate}
    \item[(i)] $f=\widetilde D(b,c)$
   such that $b^2 c \neq 1 \mod{p}$.
 \item[(ii)] $f=\widetilde G(c)$ such that $c^3\neq 1$. 
    \item[(iii)] $f=\widetilde H(q)$. 

\end{enumerate}

\end{lemma}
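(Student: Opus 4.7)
The proof follows the algorithm displayed at the start of this subsection and parallels the argument of Lemma \ref{p^3 p^2 x p}. By Theorem \ref{simply pn}, $Q$ fails to be simply connected precisely when it admits a connected principal cover of size $p^4$; by Theorem \ref{cover of principal} every such cover has the shape $E=\mathcal{Q}(G,g)$ with $|G|=p^4$, $G/Fix(g)\cong \Z_p^2\rtimes\Z_p$, $Fix(g)\leq Z(G)\cap\gamma_1(G)$ of order $p$, and $g_{Fix(g)}$ conjugate to $f$. The task is therefore to decide, case by case on $f$, whether such a pair $(G,g)$ exists.

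The first step is to restrict the candidates for $G$. Since the quotient $G/N\cong \Z_p^2\rtimes\Z_p$ has nilpotency class $2$ and we require $N\leq\gamma_1(G)\cap Z(G)$ with $|N|=p$, the group $G$ must be of class $2$ with $|\gamma_1(G)|=p^2\leq |Z(G)|$. Scanning the list $G_1,\ldots,G_{15}$ from Section \ref{appendix} leaves only the non-abelian groups of order $p^4$ whose derived subgroup is central of order $p^2$. For each such $G_i$, Lemma \ref{what auto?} gives a parametric description of $\mathrm{Aut}(G_i)$, and the demand $g|_N=\mathrm{id}$ together with $Fix(g)=N$ carves out an explicit subfamily; then I compute the induced automorphism $g_N$ on $G/N$ and compare it with the three conjugacy classes in Table \ref{Tab3}.

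In each case the comparison is driven by a conjugation invariant of $g_N$, exactly as in the argument of Lemma \ref{p^3 p^2 x p}. For $f=\widetilde D(b,c)$, the action of $g$ on $\gamma_1(G)$ descends, via the commutator, to multiplication by $bc$ on $\gamma_1(G/N)\cong \Z_p$; the requirement that $g$ fix the one-dimensional subspace $N\leq\gamma_1(G)$ pointwise, combined with the induced action of $\widetilde D(b,c)$ on the abelianization $G/\gamma_1(G)\cong \Z_p^2$, forces the eigenvalue relation $b^2c\equiv 1\pmod p$; conversely, when this relation holds one exhibits a cover by choosing compatible parameters. The analogous calculation for $f=\widetilde G(c)$ yields the single obstruction $c^3\equiv 1\pmod p$. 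For $f=\widetilde H(q)$ the characteristic polynomial of the induced map on the abelianization is the irreducible $q$, and the same invariant equation then has no solution over $\Z_p$ compatible with the connectedness constraint $Fix(g_{\Phi(G)})=1$, so no cover exists and the quandle is always simply connected.

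The main technical obstacle is the bookkeeping in step two: enumerating the relevant groups $G_i$ of order $p^4$ from the appendix together with their automorphisms in the explicit coordinates of Lemma \ref{what auto?}, and then matching $g_N$ with the normal forms of Table \ref{Tab3} up to conjugacy. Once this is done, reading off the determinantal/characteristic polynomial invariant on $G/\Phi(G)$ (exactly as was done in Lemma \ref{p^3 p^2 x p}) produces the three stated conditions, and the converse direction in (i) and (ii) is settled by the explicit construction of the cover.
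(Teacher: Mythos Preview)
Your overall strategy matches the paper's: apply Theorem \ref{simply pn}, look for a connected cover $E=\mathcal{Q}(G,g)$ of order $p^4$ with $Fix(g)\leq Z(G)\cap\gamma_1(G)$ of order $p$ and $G/Fix(g)\cong\Z_p^2\rtimes\Z_p$, and then compare $g_{Fix(g)}$ with the list in Tables \ref{Tab2} and \ref{Tab3}. However, the restriction you impose on $G$ is wrong, and this breaks the argument.

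You claim that because $G/N$ has nilpotency class $2$ and $N\leq Z(G)$, the group $G$ itself must be of class $2$ with $|\gamma_1(G)|=p^2\leq |Z(G)|$. The implication is false: a central extension of a class-$2$ group can have class $3$. In fact no class-$2$ group of order $p^4$ has $|\gamma_1(G)|=p^2$ at all (if $\gamma_1(G)\leq Z(G)$ then the commutator factors through the alternating square of $G/Z(G)$, which for a rank-$2$ space has dimension $1$), so your scan would return an \emph{empty} list and you would conclude that every $\mathcal{Q}(\Z_p^2\rtimes\Z_p,f)$ is simply connected --- contradicting parts (i) and (ii) of the statement.

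The paper instead invokes Lemma \ref{which groups} to cut the candidates down to $G_3,G_7,G_{12},G_{13}$, and then observes that only $G_7$ has a central subgroup $N$ of order $p$ with $G_7/N\cong\Z_p^2\rtimes\Z_p$. The group $G_7=\langle a_1,a_2,a_3,a_4\mid [a_2,a_1]=a_3,\ [a_3,a_1]=a_4\rangle$ has class $3$, $Z(G_7)=\langle a_4\rangle$ of order $p$, and $\gamma_1(G_7)=\langle a_3,a_4\rangle$ of order $p^2$ --- so it violates both of your constraints. The fixed-point condition on $a_4$ then reads $u_1^2v_2\equiv 1\pmod p$ (from the description \eqref{G_7_aut}), and matching $g_{Fix(g)}$ against $\widetilde D(b,c)$, $\widetilde G(c)$, $\widetilde H(q)$ gives exactly $b^2c\equiv 1$, $c^3\equiv 1$, and no solution, respectively. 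Your heuristic about ``the eigenvalue relation'' lands on the right answers, but only because the actual group $G_7$ happens to produce the invariant $u_1^2v_2$; without identifying $G_7$ you have no way to justify why the exponent pattern is $(2,1)$ rather than, say, $(1,1)$ or $(1,2)$.
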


\begin{proof}
  The quandle $Q$ is not simply connected if and only if $Q$ has a connected principal cover of size $p^4$. Let us call such cover by $E$. In particular $E=\mathcal{Q}(G,g)$ such that $G/Fix(g)\cong \Z_{p^2}\times \Z_p$, $Fix(g)\leq Z(G)\cap \gamma_1(G)$ and $g_{Fix(g)}$ is conjugate to $f$. According to Lemma \ref{which groups} and the description of the groups of size $p^4$ given in Section \ref{appendix}, $G=G_7$ and $Fix(g)=\langle a_4\rangle$. Using Lemma \ref{what auto?} we have that 
\begin{align}\label{check1}
    g_{Fix(g)}=\begin{cases}
    a_1\mapsto a_1^{u_1} a_2^{u_2} a_3^{u_3},\\
    a_2\mapsto a_2^{v_2}a_3^{v_3},\\
    a_3\mapsto a_3^{u_1^2 v_2},
    \end{cases}
\end{align}
for $u_1^2 v_2=1\pmod{p}$. Hence we can conclude by comparing (up to conjugation) the automorphisms as in \eqref{check1} with the automorphisms listed in Tables \ref{Tab2} and \ref{Tab3} (as in Lemma \ref{p^3 p^2 x p}).
\end{proof}



\section{Involutory quandles}

\subsection{Split quandle cocycles}

Let $Q$ be a quandle and $\theta\in Z^2(Q,G)$ a constant quandle cocycle. We say that $\theta$ is a {\it split cocycle} if $\theta_{x,y}=\rho(x)\rho(y)^{-1}$ for some $\rho:Q\longrightarrow G$. In this case we denote $\theta$ as $\theta(\rho)$. 

Split quandle cocycles arise for instance from morphisms of quandles with values in twisted conjugation quandles and core quandles.


    

\begin{lemma}\label{twisted}
    Let $G$ be a group, $f\in \aut{G}$ and $Q$ be a quandle and $\rho:Q\longrightarrow \Conj_f(G)$ be a quandle morphism. Then $\theta(\rho)\in Z^2(Q,G)$.
\end{lemma}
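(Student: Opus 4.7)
The plan is to verify the two defining identities \eqref{CC} and \eqref{quandle cocycle} directly for $\theta(\rho)_{x,y} = \rho(x)\rho(y)^{-1}$, using only the fact that $\rho$ is a quandle morphism into $\Conj_f(G)$, i.e.\ $\rho(x*y) = \rho(x)\,f(\rho(y)\rho(x)^{-1})$ for all $x,y \in Q$.

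First, the idempotent condition \eqref{quandle cocycle} is immediate and uses nothing about $\rho$: we have $\theta(\rho)_{x,x} = \rho(x)\rho(x)^{-1} = 1$. So the content is entirely in verifying the cocycle identity \eqref{CC}. The useful rearrangement of the morphism condition is
\[ \rho(x*y)^{-1} = f(\rho(x)\rho(y)^{-1})\,\rho(x)^{-1}, \]
obtained by inverting $\rho(x*y) = \rho(x)f(\rho(y)\rho(x)^{-1})$ and using that $f$ is a group automorphism.

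Then I would expand both sides of \eqref{CC}. The left-hand side is
\[ \theta(\rho)_{xy,xz}\,\theta(\rho)_{x,z} = \rho(x*y)\,\rho(x*z)^{-1}\,\rho(x)\rho(z)^{-1}. \]
Applying the rearrangement above to $\rho(x*z)^{-1}$, the middle factors collapse to $f(\rho(x)\rho(z)^{-1})\rho(z)^{-1}$ after the $\rho(x)^{-1}\rho(x)$ cancels, and then expanding $\rho(x*y) = \rho(x)f(\rho(y)\rho(x)^{-1})$ and using that $f$ is a homomorphism gives $\rho(x)f(\rho(y)\rho(z)^{-1})\rho(z)^{-1}$. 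The right-hand side is
\[ \theta(\rho)_{x,yz}\,\theta(\rho)_{y,z} = \rho(x)\rho(y*z)^{-1}\rho(y)\rho(z)^{-1}, \]
and applying the same rearrangement to $\rho(y*z)^{-1}$ together with the cancellation $\rho(y)^{-1}\rho(y)$ produces the same expression $\rho(x)f(\rho(y)\rho(z)^{-1})\rho(z)^{-1}$.

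There is no real obstacle here; the only thing to keep straight is the order of the group factors and the fact that $f$ commutes with products (but not with the quandle elements themselves). Once both sides are reduced to the common form above, \eqref{CC} holds and $\theta(\rho) \in Z^2(Q,G)$.
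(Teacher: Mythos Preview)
Your proof is correct and is essentially the same as the paper's: both expand $\rho(x*y)$ and $\rho(x*z)^{-1}$ (respectively $\rho(y*z)^{-1}$) via the $\Conj_f(G)$ operation and use that $f$ is a homomorphism to cancel the middle terms, arriving at the common form $\rho(x)f(\rho(y)\rho(z)^{-1})\rho(z)^{-1}$. You also explicitly note the trivial identity $\theta(\rho)_{x,x}=1$, which the paper leaves implicit.
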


\begin{proof}
    Note that
    \begin{align*}
\theta_{xy,xz}\theta_{x,z}&=\rho(xy)\rho(xz)^{-1}\rho(x)\rho(z)^{-1}\\
&=\rho(x)f(\rho(y)\rho(x)^{-1})f(\rho(x))f(\rho(z)^{-1})\rho(x)^{-1}\rho(x)\rho(z)^{-1}\\
&=\rho(x)f(\rho(y))f(\rho(z)^{-1})\rho(z)^{-1}\\
\theta_{x,yz}\theta_{y,z}&=\rho(x)\rho(yz)^{-1}\rho(y)\rho(z)^{-1}\\
&=\rho(x)f(\rho(y))f(\rho(z)^{-1})\rho(y)^{-1}\rho(y)\rho(z)^{-1}\\
&=\rho(x)f(\rho(y))f(\rho(z)^{-1})\rho(z)^{-1}
    \end{align*}
    for every $x,y,z$.
\end{proof}

Note that Lemma \ref{twisted} includes both conjugation and affine quandles.

\begin{lemma}\label{cocycle by rho core}
Let $Q$ be a quandle, $G$ be a group and $\rho:Q\longrightarrow Core(G)$ be a quandle morphism. Then $\theta(\rho)\in Z^2(Q,G)$.
\end{lemma}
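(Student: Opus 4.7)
The plan is to verify the two defining conditions \eqref{CC} and \eqref{quandle cocycle} for $\theta(\rho)_{x,y} = \rho(x)\rho(y)^{-1}$ by direct computation, using only the fact that $\rho$ respects the core operation, i.e.\ $\rho(x*y)=\rho(x)\rho(y)^{-1}\rho(x)$ for all $x,y\in Q$.

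The quandle cocycle condition \eqref{quandle cocycle} is immediate: $\theta(\rho)_{x,x}=\rho(x)\rho(x)^{-1}=1$, and no properties of $\rho$ beyond being a map are needed. The real content is the constant cocycle identity \eqref{CC}. Here I would expand
\[
\theta(\rho)_{xy,xz}\,\theta(\rho)_{x,z}=\rho(xy)\rho(xz)^{-1}\rho(x)\rho(z)^{-1},
\]
and substitute the core formulas $\rho(xy)=\rho(x)\rho(y)^{-1}\rho(x)$ and $\rho(xz)^{-1}=\rho(x)^{-1}\rho(z)\rho(x)^{-1}$. After the cancellations $\rho(x)\rho(x)^{-1}=1$ in the middle, the left-hand side collapses to $\rho(x)\rho(y)^{-1}$. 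Symmetrically, for the right-hand side,
\[
\theta(\rho)_{x,yz}\,\theta(\rho)_{y,z}=\rho(x)\rho(yz)^{-1}\rho(y)\rho(z)^{-1},
\]
and substituting $\rho(yz)^{-1}=\rho(y)^{-1}\rho(z)\rho(y)^{-1}$ yields, after the analogous cancellation $\rho(y)^{-1}\rho(y)=1$, again $\rho(x)\rho(y)^{-1}$. Both sides therefore agree.

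There is no real obstacle; the proof is a short symbolic computation exactly parallel to the proof of Lemma \ref{twisted}. The only thing worth emphasizing is the structural reason why it works: in $\core(G)$ the ``$x$-conjugation'' built into the core operation appears on both sides of $\rho(y)^{-1}$, and these flanking $\rho(x)$ (resp.\ $\rho(y)$) factors are precisely what cancel against the outer $\rho(x z)^{-1}$ (resp.\ $\rho(yz)^{-1}$) factors coming from the definition of $\theta(\rho)$. So the verification is essentially a one-line bracket matching once the core identity is applied.
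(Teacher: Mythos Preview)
Your proof is correct and is essentially identical to the paper's own argument: both sides of \eqref{CC} are expanded using $\rho(xy)=\rho(x)\rho(y)^{-1}\rho(x)$ and $\rho(yz)^{-1}=\rho(y)^{-1}\rho(z)\rho(y)^{-1}$, and after the obvious cancellations each reduces to $\rho(x)\rho(y)^{-1}$. The only addition is that you spell out \eqref{quandle cocycle} explicitly, which the paper leaves implicit.
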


\begin{proof}
Let $x,y,z\in Q$, then we have 
\begin{align*}
\theta_{xy,xz}\theta_{x,z}&=\rho(xy)\rho(xz)^{-1}\rho(x)\rho(z)^{-1}=\rho(x)\rho(y)^{-1}\rho(x)\rho(x)^{-1}\rho(z)\rho(x)^{-1}\rho(x)\rho(z)^{-1}\\
&=\rho(x)\rho(y)^{-1},\\
\theta_{x,yz}\theta_{y,z}&=\rho(x)\rho(yz)^{-1}\rho(y)\rho(z)^{-1}=\rho(x)\rho(y)^{-1}\rho(z)\rho(y)^{-1}\rho(y)\rho(z)^{-1}\\
&=\rho(x)\rho(y)^{-1}.
\end{align*}
So $\theta$ is a constant cocycle.
\end{proof}

%




Let us make some examples of split cocycles.
\begin{example}\text{}
\begin{itemize}
    \item[(i)] Let $Q,$ be a quandle and $Q'=Q/\alpha$ for some $\alpha\in Con(Q)$. The map $\rho:x\mapsto L_{[x]}$ is a morphism of quandle from $Q$ to $\Conj(\lmlt(Q/\alpha))$. In this case we have
    $$(x,[a])(y,[b])=(xy,[x]([y]\backslash [b]))$$
     for every $x,y,a,b\in Q$.
     \item[(ii)] Let $G$ be a group, $f\in \aut{G}$ and $1:G\longrightarrow G$ the identity map. Then $\theta(1)$ is a split cocycle of $\core(G)$ and $\Conj_f(G)$.

\item[(iii)] Let $Q=\aff(A,f)$ be a latin quandle. Note that $x/0=(1-f)^{-1}(x)$ for every $x\in A$. Consider $\theta(1)$ and 
let $\gamma(x)=-\theta(1)_{x/0,0}$ for every $x\in A$. Then the $0$-normalized cocycle defined as in \eqref{u norm} is trivial. Indeed
\begin{align*}
\theta'_{x,y}&=\gamma_{xy} +\theta_{x,y}(1)-\gamma_y=-x-(1-f)^{-1}f(y)+x-y+(1-f)^{-1}(y)=0.
\end{align*}

\item[(iv)] Let $p$ be a prime, $Q=\aff(\Z_{p^\infty},1+p)$. The quandle $Q$ is not faithful, indeed $L_0=L_x$ provided $px=0$. Let $x\in \mathbb{Z}_{p^\infty}$ such that $px=0$. Then $L_{x} L_0^{-1}(0)=px=0$. Consider $E=Q\times_{\theta(1)} \Z_{p^\infty}$. Then 
$$L_{(x,0)}(0,0)=(px,\theta(1)_{x,0}(0))=(0,x)$$
 Hence $\mathcal{O}_{\lmlt(E)}\wedge\ker{p}\neq 0_E$, so $\theta(1)$ is not cohomologous to \textbf{1}.
 \end{itemize}

\end{example}

Let us show an explicit example of non-trivial constant split quandle cocycle for core quandles. 

\begin{lemma}\label{b(1)=1}
    Let $G$ be a group and $1:G\longrightarrow G$ be the identity map. If $\theta(1)\sim \textbf{1}$ then $\core(G)$ is superfaithful. 
\end{lemma}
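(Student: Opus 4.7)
The plan is to unpack the cohomology hypothesis as a simple functional equation on $G$ and then use the idempotence of $\core(G)$ to show that any pair belonging to the Cayley kernel of an arbitrary subquandle is already forced to be equal.

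First, I would rewrite $\theta(1)\sim\textbf{1}$ explicitly. By Lemma \ref{cocycle by rho core} applied to $\rho=1$, the cocycle $\theta(1)\in Z^2(\core(G),G)$ is given by $\theta(1)_{x,y}=xy^{-1}$. The hypothesis therefore produces a map $\gamma:G\longrightarrow G$ with
\[ xy^{-1}=\gamma_{x*y}\,\gamma_y^{-1} \qquad\text{for all } x,y\in G, \]
and, since $x*y=xy^{-1}x$ in $\core(G)$, this reads
\[ \gamma_{xy^{-1}x}=xy^{-1}\gamma_y. \]

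Next, I would fix an arbitrary subquandle $S\subseteq \core(G)$ and a pair $x,y\in S$ with $L_x|_S=L_y|_S$, aiming to conclude $x=y$. Because $y\in S$, evaluating the equality of restrictions at $z=y$ and using idempotence $L_y(y)=y$ gives $xy^{-1}x=L_x(y)=L_y(y)=y$. Feeding this back into the displayed identity at the specific pair $(x,y)$ yields $\gamma_y=\gamma_{xy^{-1}x}=xy^{-1}\gamma_y$, and cancelling $\gamma_y$ forces $xy^{-1}=1$, i.e.\ $x=y$. Since $S$ was arbitrary, every subquandle of $\core(G)$ is faithful, which is superfaithfulness.

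The argument collapses to a single substitution once the cohomology condition is written out, so there is no real obstacle; the key structural point is that idempotence allows us to test $L_x=L_y$ at $z=y$ inside \emph{any} subquandle containing $y$, no matter how small. The only item warranting a brief sanity check is the direction of the relation $\theta(1)\sim\textbf{1}$: writing the equivalence in the opposite direction produces the symmetric equation $\gamma_{xy^{-1}x}=\gamma_y\cdot yx^{-1}$, and the identity $xy^{-1}x=y$ yields $yx^{-1}=1$ by the same cancellation, so the conclusion is unchanged.
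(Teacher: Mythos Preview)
Your proof is correct and is in fact more direct than the paper's. The paper argues via the cover $E=\core(G)\times_{\theta(1)} G$: from $\theta(1)\sim\textbf{1}$ it invokes Proposition \ref{trivial for connected} to get $\ker p\wedge\mathcal{O}_{\lmlt(E)}=0_E$, observes that if $x^2=1$ then $(x,a)(1,1)=(1,x)$ so $(1,1)$ and $(1,x)$ lie in the same fiber and the same orbit, whence $x=1$; finally it cites \cite[Lemma 4.3]{Cores} to pass from ``$G$ has no involutions'' to ``$\core(G)$ is superfaithful''. Your argument bypasses both the cover construction and the external reference: you translate $\theta(1)\sim\textbf{1}$ into the functional equation $\gamma_{xy^{-1}x}=xy^{-1}\gamma_y$ and use idempotence to test $L_x|_S=L_y|_S$ at $z=y$, which forces $xy^{-1}x=y$ and hence $x=y$ by cancellation. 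This is essentially the same computation the paper does for the special pair $(x,1)$, but carried out at the cocycle level and for an arbitrary pair, so that superfaithfulness follows directly without needing the characterisation of superfaithful core quandles. Your sanity check on the direction of the cohomology relation is also apt and the argument is symmetric in that respect.
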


\begin{proof}
Consider $E=Q\times_{\theta(1)} G$. Assume that $x\in G$ such that $x^2=1$. Then $L_x (1)=x^2=1$ and so $$(x,a)(1,1)=(x^2,x)=(1,x).$$ Then we have $(1,x)\, \ker{p}\wedge \mathcal{O}_{\lmlt(E)}\, (1,1)$ and so $x=1$ since $\ker{p}\wedge \mathcal{O}_{\lmlt(E)}=0_E$. According to \cite[Lemma 4.3]{Cores} the quandle $\core(G)$ is superfaithful. 
\end{proof}

According to \cite[Proposition 4.4]{Cores} a core quandle over a group $G$ is latin if and only if the map $s:x\mapsto x^2$ is bijective. Let us denote by $x\mapsto x^{1/2}$ the inverse map of $s$. 
\begin{proposition}\label{non trivial cocycle for cores}
    Let $Q=Core(G)$ be latin. Then $\theta(1)\sim \textbf{1}$ if and only if $G$ is abelian. 
\end{proposition}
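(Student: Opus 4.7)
The plan is to translate the coboundary condition $\theta(1)\sim\mathbf{1}$ into a functional equation for the accompanying map $\gamma\colon G\to G$, and then to exploit the bijectivity of the squaring map $s\colon x\mapsto x^2$ (equivalent to latinness of $\core(G)$ by \cite[Proposition 4.4]{Cores}) both to pin down $\gamma$ explicitly and to force commutativity. Write $x^{1/2}$ for the unique square root supplied by latinness. By the definition of split cocycles we have $\theta(1)_{x,y}=xy^{-1}$, and $\theta(1)\sim\mathbf{1}$ amounts to the existence of $\gamma\colon G\to G$ satisfying
\begin{equation*}
\gamma(xy^{-1}x)=xy^{-1}\gamma(y)\qquad\text{for all }x,y\in G. \tag{$\star$}
\end{equation*}

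For the easy direction, assume $G$ is abelian and set $\gamma(x)=x^{1/2}$. Since $(xy^{-1}x)^{1/2}=xy^{-1/2}$ in an abelian group, one computes $\gamma(xy^{-1}x)\gamma(y)^{-1}=xy^{-1/2}\cdot y^{-1/2}=xy^{-1}=\theta(1)_{x,y}$, verifying $(\star)$.

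For the converse, suppose $(\star)$ holds for some $\gamma$. Specializing at $y=1$ gives $\gamma(x^2)=x\gamma(1)$; since every element of $G$ is uniquely a square, this forces $\gamma(u)=u^{1/2}c$ for all $u\in G$, where $c=\gamma(1)$. Feeding this closed form back into $(\star)$ and cancelling $c$ on the right yields the identity $(xy^{-1}x)^{1/2}=xy^{-1/2}$ for all $x,y\in G$. Squaring this and cancelling the outer $x$ on the left gives $y^{-1}x=y^{-1/2}xy^{-1/2}$, whence $xy^{-1/2}=y^{-1/2}x$. Since $y\mapsto y^{-1/2}$ is a bijection of $G$, every element of $G$ lies in its centre, so $G$ is abelian.

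The only non-routine step is recognising that $(\star)$ restricted to $y=1$ already pins $\gamma$ down to the closed form $u\mapsto u^{1/2}c$; once this is in hand, $(\star)$ collapses to a clean commutation identity and latinness finishes the job.
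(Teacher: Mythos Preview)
Your proof is correct and follows essentially the same route as the paper's. Both arguments reduce the condition $\theta(1)\sim\mathbf{1}$ to the identity $(xy^{-1}x)^{1/2}=xy^{-1/2}$ (equivalently $xy^{1/2}=y^{1/2}x$), and then use bijectivity of the square-root map to conclude that $G$ is abelian. The only cosmetic difference is that the paper obtains the relevant $\gamma$ via the $u$-normalization formula \eqref{u norm} and invokes Proposition~\ref{normalized cocycles}, whereas you extract the form $\gamma(u)=u^{1/2}c$ directly from the coboundary equation at $y=1$; the resulting computations are the same.
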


\begin{proof}
    Note that $R_1(x)=x^2$ and so $x/1=x^{1/2}$ for every $x\in Q$. We define $\gamma_x=\theta_{x/1,1}^{-1}(1)$ as \eqref{u norm} and so we have 
\begin{align*}
    \theta'_{x,y}&=\gamma_{x  y}\theta_{x,y}(1)\gamma_y^{-1} = ((xy^{-1} x)^{1/2})^{-1} xy^{-1} y^{1/2} \\
    &= ((xy^{-1} x)^{-1})^{1/2} xy^{-1} y^{1/2} = (x^{-1}y x^{-1})^{1/2}xy^{-1}y^{1/2}.
\end{align*}        
    So $\theta_{x,y}'=(x^{-1}y x^{-1})^{1/2}(xy^{-1})y^{1/2}=1$ for every $x,y\in G$ if and only if $xy^{1/2}=y^{1/2}x$ for every $x,y\in G$. 

    Since $\theta(1)\sim \textbf{1}$ if and only if $\theta'=\textbf{1}$ (see Proposition \ref{normalized cocycles}), then $\theta(1)\sim \textbf{1}$ if and only if $G$ is abelian. 
\end{proof}

\subsection{Simply connected involutory quandles}

Recall that a quandle $Q$ is involutory if $x(xy)=y$ for every $x,y\in Q$. For instance core quandles are involutory. In this section we take a look to simply connected involutory quandles. Recall that finite involutory connected affine quandles are of the form $\aff(A,-1)=\core(A)$ for some finite abelian group $A$ of odd size.

\begin{proposition}\label{p involutory are cyclic}
    Let $G$ be a finite $p$-group. If $Q=\mathcal{Q}(G,f)$ is a latin involutory simply connected quandle then $G$ is cyclic. 
\end{proposition}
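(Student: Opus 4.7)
Here is how I would approach the proof.

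The plan is first to reduce to the case of an affine quandle via the involutory and latin hypotheses, and then to use the preservation of simple-connectedness under quotients (Proposition~\ref{simply factor}) together with the classification in Theorem~\ref{simply p2}.

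\medskip

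\textbf{Step 1: Unpack the hypotheses.} Writing out $x*(x*y)=y$ for $\mathcal{Q}(G,f)$ gives $xf^2(x^{-1}y)=y$ for all $x,y\in G$, so taking $x=1$ yields $f^2=\mathrm{id}_G$. The latin condition amounts to the map $x\mapsto xf(x^{-1})$ being a bijection of $G$, and an easy computation shows this is equivalent to $\mathrm{Fix}(f)=\{1\}$. Since $G$ is nontrivial (otherwise there is nothing to prove) we conclude that $f$ is a fixed-point-free involution of $G$.

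\medskip

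\textbf{Step 2: Identify $G$ as abelian and $f=-1$.} This is the classical Burnside/Neumann argument: every $g\in G$ is of the form $x^{-1}f(x)$, hence $f(g)=f(x)^{-1}f^2(x)=(x^{-1}f(x))^{-1}=g^{-1}$, so $f$ acts as inversion. But then the relation $f(gh)=f(g)f(h)$ forces $(gh)^{-1}=g^{-1}h^{-1}$, whence $G$ is abelian. Moreover no element of order $2$ is fixed-point-free under inversion, so $|G|$ is odd; in particular $p$ is odd. Hence $Q\cong \aff(G,-1)=\core(G)$ with $G$ a finite abelian $p$-group, $p>2$.

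\medskip

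\textbf{Step 3: Produce a non--simply-connected quotient if $G$ is not cyclic.} Suppose for a contradiction that $G$ is not cyclic. Then its $p$-rank $\dim_{\F_p}(G/pG)$ is at least $2$, so there is a surjective group homomorphism $\pi:G\twoheadrightarrow \Z_p^2$, and $-1$ is clearly preserved by $\pi$. Therefore $\pi$ induces a surjective quandle morphism
\[
\aff(G,-1)\;\twoheadrightarrow\;\aff(\Z_p^2,-1)=\aff(\Z_p^2,D(-1,-1)).
\]
Since $Q$ is latin and simply connected, Proposition~\ref{simply factor} says every quotient of $Q$ is simply connected, so $\aff(\Z_p^2,D(-1,-1))$ must be simply connected. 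But in Theorem~\ref{simply p2}(i) the diagonal automorphism $D(a,b)$ gives a simply connected affine quandle only when $ab\neq 1\pmod p$, and here $(-1)(-1)=1\pmod p$. This contradicts Theorem~\ref{simply p2}, so $G$ must be cyclic.

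\medskip

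The step that I would expect to be the main obstacle is Step~2, i.e.\ justifying that the latin involutory assumption in the (a priori non-abelian) principal setting forces $G$ abelian and $f=-1$. The remainder is then a clean application of the already-proved machinery: Proposition~\ref{simply factor} to push simple connectedness down to a rank-$2$ elementary abelian quotient, followed by the explicit classification in Theorem~\ref{simply p2} to obtain the contradiction.
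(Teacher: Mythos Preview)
Your proof is correct, and its core coincides with the paper's: both produce $\aff(\Z_p^2,-1)$ as a quotient of $Q$, then apply Proposition~\ref{simply factor} and Theorem~\ref{simply p2} (where $D(-1,-1)$ fails the condition $ab\neq 1\pmod p$) to reach a contradiction.

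The only difference is in the reduction. You run the classical Burnside argument (fixed-point-free involution $\Rightarrow$ $f$ is inversion $\Rightarrow$ $G$ abelian, $p$ odd) and then quotient $G$ directly onto $\Z_p^2$. The paper instead avoids proving $G$ abelian: it passes to the Frattini quotient $G/\Phi(G)\cong\Z_p^n$, which is automatically elementary abelian, identifies the induced quandle as $\aff(\Z_p^n,-1)$ via the known description of connected affine involutory quandles, and then argues $n=1$ (so $G/\Phi(G)$ cyclic, hence $G$ cyclic). Your route is more self-contained and your worry about Step~2 is unfounded---it is a standard textbook fact. The paper's route is marginally slicker in that it never needs $G$ itself to be abelian, only the Frattini quotient.
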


\begin{proof}
The Frattini subgroup $\Phi(G)$ is characteristic and so the left coset partitions of $G$ with respect to $\Phi$ provides a congruence of $Q$. Assume $G/\Phi(G)\cong \Z_p^n$ and $n\geq 2$, then $Q$ has a factor that is principal over $G/\Phi$ and so it is isomorphic to $Q'=\Aff(\Z_p^n,-1)$. Every subgroup of $\Z_p^n$ provides a congruence of $Q'$, then $Q$ has a factor isomorphic to $Q''=\aff(\Z_p^2,-1)$ and $Q''$ is simply connected (see Proposition \ref{simply factor}). A contradiction to Theorem \ref{simply p2}. So $G/\Phi(G)$ is cyclic and therefore also $G$ is cyclic.
\end{proof}

We can use the direct decomposition of principal quandles over nilpotent groups to characterize finite nilpotent involutory latin quandles that are simply connected. 
\begin{theorem}\label{involutory simply}
    Let $Q$ be a finite nilpotent involutory latin quandle. The following are equivalent:
    \begin{itemize}
        \item[(i)] $Q$ is simply connected.
        \item[(ii)] $Q\cong \core(\Z_{m})$ for some odd integer $m\in \mathbb{N}$. 
    \end{itemize}
\end{theorem}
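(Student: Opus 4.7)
The plan is to treat the two implications separately. The direction (ii) $\Rightarrow$ (i) is essentially one line: $\core(\Z_m) = \aff(\Z_m,-1)$ is connected and latin for odd $m$ (since $1-(-1)=2$ is a unit modulo $m$), is involutory and abelian by construction, and is simply connected by Proposition \ref{cyclic are simply}.

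For the converse, assume $Q$ is a finite, nilpotent, involutory, latin, simply connected quandle. Since $Q$ is simply connected, Theorem \ref{general simply} gives that $Q$ is principal, and Proposition \ref{coset reps} lets us write $Q \cong \mathcal{Q}(\dis(Q),\widehat{L_x})$ with $\dis(Q)$ nilpotent. Lemma \ref{direct dec} then decomposes $Q$ as $\prod_p \mathcal{Q}(S_p,f|_{S_p})$ over the Sylow subgroups $S_p$ of $\dis(Q)$. Each factor $Q_p$ is a direct factor of a latin involutory quandle, hence itself latin and involutory, and is simply connected by Theorem \ref{simply nilpotent iff}. Proposition \ref{p involutory are cyclic} then forces each $S_p$ to be cyclic.

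The critical step is to rule out the prime $p=2$. I would argue as follows: every unit $u \in \Z_{2^k}^{*}$ is odd, so $1-u$ is even and hence is not a unit in $\Z_{2^k}$. However, the latinness of $\mathcal{Q}(\Z_{2^k},f)$ requires the map $1-f$ (that is, multiplication by $1-u$) to be bijective. Thus no nontrivial $2$-Sylow can occur, and $|\dis(Q)|$ is odd.

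Finally, since cyclic groups of coprime orders combine to a cyclic group, $\dis(Q) \cong \Z_m$ for some odd $m$, and $f$ is multiplication by some involution $u \in \Z_m^{*}$. The unit group $\Z_{p^k}^{*}$ is cyclic of even order for every odd prime $p$, with a unique element of order $2$, namely $-1$; so by the Chinese Remainder Theorem $u \equiv \pm 1 \pmod{p^{v_p(m)}}$ at each prime $p \mid m$. The latinness condition forces the sign at each $p$ to be $-1$ (otherwise $1-u$ would fail to be coprime to $p$), giving $u \equiv -1 \pmod m$, so $Q \cong \aff(\Z_m,-1) = \core(\Z_m)$. The only nonroutine step in this argument is the $2$-Sylow exclusion, but the observation ``every unit is odd, so $1-u$ is even'' makes even that immediate; the rest is a direct assembly of the nilpotent decomposition with the results already established for $p$-power principal quandles.
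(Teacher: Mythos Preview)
Your proof is correct and follows essentially the same route as the paper: principal via Theorem \ref{general simply}, nilpotent $\dis(Q)$ giving the Sylow decomposition of Lemma \ref{direct dec}, each $p$-component simply connected by Theorem \ref{simply nilpotent iff}, and each $S_p$ cyclic by Proposition \ref{p involutory are cyclic}. The only difference is that you spell out explicitly why $m$ must be odd and why the automorphism must be $-1$, whereas the paper absorbs both facts into the remark (stated at the opening of Section 5.2) that finite connected involutory affine quandles are exactly $\aff(A,-1)$ with $|A|$ odd.
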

\begin{proof}
    (i) $\Rightarrow$ (ii) According to Theorem \ref{general simply} the quandle $Q$ is principal and according to Proposition \ref{coset reps} we have $Q\cong \mathcal{Q}(\dis(Q),\widehat{L_x})$. The quandle $Q$ is nilpotent and so $\dis(Q)$ is nilpotent. According to Lemma \ref{direct dec} we have that the quandle $Q$ is a direct product of $\prod_{p} \mathcal{Q}(S_p,f|_{S_p})$. Hence $\mathcal{Q}(S_p,f|_{S_p})$ is simply connected (see Theorem \ref{simply nilpotent iff}) and so $S_p$ is cyclic according to Proposition \ref{p involutory are cyclic}. Then $G$ is cyclic since it is the direct product of cyclic groups with coprime size.
    
    (ii) $\Rightarrow$ (i) It follows by Proposition \ref{cyclic are simply}.
\end{proof}

We can characterize finite simply connected core quandles.
\begin{corollary}   \label{simply cores}
    Let $G$ be a finite group. Then $\core(G)$ is simply connected if and only if $G$ is cyclic of odd size.
\end{corollary}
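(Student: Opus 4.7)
The plan is to prove the two implications separately, with the forward direction doing essentially all the work by chaining together the machinery of the previous subsection.

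For the converse ($G$ cyclic of odd order $\Rightarrow$ $\core(G)$ simply connected), I would simply observe that $\core(\Z_m) = \aff(\Z_m,-1)$, and since $m$ is odd the endomorphism $1-(-1)=2$ is invertible on $\Z_m$, so $\aff(\Z_m,-1)$ is connected. Then Proposition~\ref{cyclic are simply} gives that it is simply connected.

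For the forward direction ($\core(G)$ simply connected $\Rightarrow$ $G$ cyclic of odd order), the key idea is to feed the identity map $1_G: G \to G$, viewed as a quandle morphism $\core(G) \to \core(G)$, through the split-cocycle machinery. First, by Proposition~\ref{simply then all groups} applied with coefficient group $G$ itself, simple connectivity yields $H^2(\core(G),G)=\{\textbf{1}\}$. Lemma~\ref{cocycle by rho core} exhibits $\theta(1_G) \in Z^2(\core(G),G)$, and hence $\theta(1_G)\sim\textbf{1}$. From here I would extract two consequences: Lemma~\ref{b(1)=1} gives that $\core(G)$ is superfaithful, so by \cite[Lemma~4.3]{Cores} the group $G$ has no involutions and thus $|G|$ is odd; and because squaring is then bijective, $\core(G)$ is latin by \cite[Proposition~4.4]{Cores}, which together with $\theta(1_G)\sim\textbf{1}$ forces $G$ to be abelian via Proposition~\ref{non trivial cocycle for cores}.

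At this point $\core(G)=\aff(G,-1)$ is a finite abelian (hence nilpotent) involutory latin quandle that is simply connected, so Theorem~\ref{involutory simply} applies and yields $\core(G)\cong\core(\Z_m)$ for some odd $m$. The main obstacle, and the only nontrivial step, is upgrading this quandle isomorphism to a group isomorphism $G\cong\Z_m$. I would do this by observing that for an abelian group $A$ of odd order, $\dis(\aff(A,-1))=(1-(-1))A=2A=A$, so the displacement group, being an isomorphism invariant of the quandle, recovers the underlying abelian group; hence $G\cong\Z_m$ is cyclic of odd order.
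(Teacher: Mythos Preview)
Your argument is correct and follows essentially the same route as the paper: use $\theta(1)\sim\textbf{1}$ via Lemma~\ref{b(1)=1} to get superfaithfulness (hence odd order and latin), then Proposition~\ref{non trivial cocycle for cores} to force $G$ abelian, and finally Theorem~\ref{involutory simply}. The paper just cites \cite[Corollary~4.5]{Cores} in place of your separate invocation of \cite[Lemma~4.3]{Cores} and \cite[Proposition~4.4]{Cores}, and leaves the upgrade from $\core(G)\cong\core(\Z_m)$ to $G\cong\Z_m$ implicit, whereas you make it explicit via $\dis(\aff(A,-1))=2A=A$; this is a welcome clarification rather than a different method.
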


\begin{proof}
Assume that $Q=\core(G)$ is simply connected. Then $Q$ is superfaithful by Lemma \ref{b(1)=1}. Hence $Q$ is latin and it has odd size according to \cite[Corollary 4.5]{Cores}. Moreover, $\theta(1)\sim \textbf{1}$ and so according to Proposition \ref{non trivial cocycle for cores} the group $G$ is abelian. Hence $Q$ is abelian and we can conclude by Theorem \ref{involutory simply}. 
\end{proof}

Let $Q$ be a quandle and $H_n=\langle L_x^n L_y^{-n},\, x,y\in Q\rangle$ and $\mathcal{O}_n=\mathcal{O}_{H_n}$. The subgroup $H_n$ is normal in $\lmlt(Q)$ and so $\mathcal{O}_n$ is a congruence. 
\begin{corollary}\label{cor on involutory}
    Let $Q$ be a finite simply connected latin quandle.
    \begin{itemize}
        \item[(i)] If $Q$ is nilpotent then $Q/\mathcal{O}_2\cong \aff(\Z_m,-1)$ for some odd integer $m\in \mathbb{N}$.

        \item[(ii)] If $Q$ is involutory then $Q/\gamma_Q\cong \aff(\Z_m,-1)$ for some odd integer $m\in \mathbb{N}$ and $\gamma_1(\dis(Q))=\gamma_2(\dis(Q))$. 
    \end{itemize}
\end{corollary}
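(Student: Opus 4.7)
The plan is to prove the two parts separately, with part (i) essentially immediate from Theorem \ref{involutory simply} once one identifies the factor, and part (ii) requiring an additional covering argument for the derived-subgroup statement.

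For part (i), the key observation is that $\mathcal{O}_2$ automatically makes the quotient involutory. Writing $\pi = \pi_{\mathcal{O}_2}\colon \lmlt(Q) \to \lmlt(Q/\mathcal{O}_2)$, we have $H_2 \leq \lmlt^{\mathcal{O}_2} = \ker \pi$, so $\pi(L_x^2 L_y^{-2}) = 1$, i.e.\ $L_{[x]}^2 = L_{[y]}^2$ for all $x,y$. Evaluating at $[y]$ and using $L_y(y)=y$, we get $L_{[x]}^2([y]) = [y]$ for every $y$, hence $L_{[x]}^2 = 1$, so $Q/\mathcal{O}_2$ is involutory. As a quotient of the finite nilpotent latin simply connected quandle $Q$, the factor $Q/\mathcal{O}_2$ inherits latinity (finite latin quandles are closed under homomorphic images), nilpotence, and simple-connectedness (Proposition \ref{simply factor}). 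Theorem \ref{involutory simply} then yields $Q/\mathcal{O}_2 \cong \aff(\Z_m,-1)$ for some odd $m$.

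For part (ii), the factor statement follows the same template: $Q/\gamma_Q$ is abelian by definition of $\gamma_Q$, and abelian quandles are nilpotent of length $1$; combined with being latin, involutory, and simply connected (inherited from $Q$ via Proposition \ref{simply factor}), Theorem \ref{involutory simply} gives $Q/\gamma_Q \cong \aff(\Z_m,-1)$ for odd $m$. In particular, writing $G=\dis(Q)$ and $\tau=\widehat{L_{x_0}}$, we have $G/\gamma_1(G)\cong \Z_m$ with $\tau$ acting as inversion modulo $\gamma_1(G)$, and by Theorem \ref{general simply} we can identify $Q \cong \mathcal{Q}(G,\tau)$.

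For the derived-subgroup equality, I would introduce the intermediate principal quandle $Q_\gamma := \mathcal{Q}(G/\gamma_2(G),\bar\tau)$, which is a connected factor of $Q$ obtained by quotienting by the $\tau$-stable characteristic subgroup $\gamma_2(G)$. The plan is to show that the induced quandle morphism $Q_\gamma \to Q/\gamma_Q$ (coming from $G/\gamma_2(G) \twoheadrightarrow G/\gamma_1(G)$) is a covering, i.e.\ that its fibres—the cosets of $\gamma_1(G)/\gamma_2(G)$—lie in $\lambda_{Q_\gamma}$. For the principal presentation, a direct computation shows $L_{\bar g} = L_{\bar h}$ in $Q_\gamma$ iff $\bar g\bar h^{-1} \in Fix(\bar\tau)$, so the covering condition reduces to $\gamma_1(G)/\gamma_2(G) \subseteq Fix(\bar\tau)$. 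Once the covering is established, $Q/\gamma_Q$ is simply connected by Proposition \ref{cyclic are simply}, so Theorem \ref{H2 abelian} forces $Q_\gamma \cong Q/\gamma_Q$; comparing the underlying set sizes $|G/\gamma_2(G)| = |G/\gamma_1(G)|$ gives $\gamma_1(G)=\gamma_2(G)$.

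The main obstacle is verifying that $\bar\tau$ acts trivially on $\gamma_1(G)/\gamma_2(G)$. This is a short but crucial calculation: for a generator $[a,b]$ of $\gamma_1(G)$, since $\tau \equiv -1 \pmod{\gamma_1(G)}$ one has $\tau(a) = a^{-1}c_a$ and $\tau(b) = b^{-1}c_b$ with $c_a,c_b \in \gamma_1(G)$, and because $\gamma_1(G)/\gamma_2(G)$ is central in $G/\gamma_2(G)$, the central factors cancel in commutators, yielding $\bar\tau([\bar a,\bar b]) = [\bar a^{-1},\bar b^{-1}]$; the standard identity $[a^{-1},b^{-1}] \equiv [a,b] \pmod{\gamma_2(G)}$ (valid in any $2$-step nilpotent quotient) then gives $\bar\tau([\bar a,\bar b]) = [\bar a,\bar b]$, as required.
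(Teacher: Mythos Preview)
Your proof is correct. Part (i) and the identification $Q/\gamma_Q\cong\aff(\Z_m,-1)$ in part (ii) match the paper's argument exactly (you just spell out why $Q/\mathcal{O}_2$ is involutory, which the paper leaves implicit).

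For the equality $\gamma_1(G)=\gamma_2(G)$, however, you take a genuinely different route. The paper argues purely group-theoretically: once $G/\gamma_1(G)$ is cyclic, it invokes the classical fact (cited from Sims) that $\gamma_1(G)/\gamma_2(G)$ is generated by commutators of any lift of a generating set of $G/\gamma_1(G)$; with a single generator this gives $[x,x]=1$, so $\gamma_1(G)/\gamma_2(G)=1$. Your argument instead stays inside the quandle-theoretic framework of the paper: you build the intermediate principal quandle $Q_\gamma=\mathcal{Q}(G/\gamma_2(G),\bar\tau)$, verify via the computation $\bar\tau([\bar a,\bar b])=[\bar a^{-1},\bar b^{-1}]\equiv[\bar a,\bar b]$ that $\gamma_1(G)/\gamma_2(G)\subseteq Fix(\bar\tau)$, conclude that $Q_\gamma\to Q/\gamma_Q$ is a connected cover, and then use simple connectedness of $\aff(\Z_m,-1)$ (Proposition~\ref{cyclic are simply} and Theorem~\ref{H2 abelian}) to force equal sizes. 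One small notational slip: the criterion for $L_{\bar g}=L_{\bar h}$ in a principal quandle is $\bar g^{-1}\bar h\in Fix(\bar\tau)$ rather than $\bar g\bar h^{-1}$, but since you apply it only to elements of the normal subgroup $\gamma_1(G)/\gamma_2(G)$ this is harmless. The paper's approach is shorter and needs no involutory-specific computation once cyclicity is known; yours is self-contained and illustrates the covering machinery developed in the paper.
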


\begin{proof}
Let $G=\dis(Q)$. Recall that the quandle $Q/\gamma_Q$ is affine over $\dis(Q/\gamma_Q)\cong G/\gamma_1(G)$, see Proposition \ref{gamma for quandles}.

    (i) The quandle $Q/\mathcal{O}_2$ is involutory, nilpotent and simply connected. So we can apply Theorem \ref{involutory simply}.
    
  (ii)  The quandle $Q/\gamma_Q\cong \aff(G/\gamma_1(G),f)$ is abelian and simply connected. So we can apply Theorem \ref{involutory simply} and so $G/\gamma_1(G)$ is cyclic. According to \cite[Proposition 9.2.5]{Sims} we have that $\gamma_1(G)/\gamma_2(G)$ is generated by $\setof{[x,y]}{x,y\in X}$ provided that $G/\gamma_1(G)$ is generated by $\setof{x\gamma_1(G)}{x\in X}$. Since $\dis(Q/\gamma_Q)\cong G/\gamma_1(G)$ is cyclic then $|X|$ has size $1$ and accordingly $\gamma_1(G)/\gamma_2(G)=1$. 
\end{proof}
Theorem \ref{involutory simply} and Corollary \ref{cor on involutory}(ii) do not extend to non-latin quandles. For instance for the quandle $Q=${\tt SmallQuandle(27,1)} in the \cite{RIG} database of connected quandles is a nilpotent involutory simply connected quandle, but it is not affine and $Q/\gamma_Q=\aff(\Z_p^2,-1)$.




\section{Appendix: groups of size \texorpdfstring{$p^4$}{p^4}}\label{appendix}

We denote by $G_i$ for $i=1,\ldots, 15$ the groups of size $p^4$ following \cite{tedesco}. In this section we show the power-commutator presentation of the non-abelian groups in such a list and the description of their automorphisms.

\begin{itemize}
    \item $ G_3 = \langle a_1, a_2, a_3, a_4 \mid [a_2, a_1] = a_3, a_1^p = a_4 \rangle.$
    In particular $\langle a_3\rangle=\gamma_1(G_3)\leq  Z(G_3)$ and $G_3/\gamma_1(G_3)\cong \Z_{p^2}\times \Z_p$.
    The automorphisms of $G_3$ are of the form:
\begin{align}\label{G_3_aut}
 \varphi(u_1, u_2, u_3, u_4, v_2, v_3, v_4) =\begin{cases}
         a_1 \mapsto a_1^{u_1} a_2^{u_2} a_3^{u_3} a_4^{u_4}, \\
    a_2 \mapsto a_2^{v_2} a_3^{v_3} a_4^{v_4}, \\
    a_3 \mapsto a_3^{u_1 v_2}, \\
    a_4 \mapsto a_4^{u_1},
 \end{cases}
\end{align}
where $u_1, u_2, u_3, u_4, v_2, v_3, v_4 \in \{0, \dots, p-1\}$ and $u_1 \neq 0$, $v_2 \neq 0$.

\item $G_4 = \langle a_1, a_2, a_3, a_4 \mid [a_2, a_1] = a_4, a_1^p = a_3, a_2^p = a_4 \rangle$. In particular $\langle a_3\rangle=\gamma_1(G_4)\leq Z(G_4)$ and $G_4/\gamma_1(G_4)\cong \Z_{p^2}\times \Z_p$.
 The automorphisms of $G_4$ are of the form:
 \begin{align}\label{G_4_aut}
     \varphi(u_2, u_3, u_4, v_2, v_3, v_4) =\begin{cases}
    a_1 \mapsto a_1 a_2^{u_2} a_3^{u_3} a_4^{u_4}, \\
    a_2 \mapsto a_2^{v_2} a_3^{v_3} a_4^{v_4}, \\
    a_3 \mapsto a_3^{v_2} , \\
    a_4 \mapsto a_3^{v_2}a_4,
    \end{cases}
\end{align}
where $u_2, u_3, u_4, v_2, v_3, v_4 \in \{0, \dots, p-1\}$ and $v_2 \neq 0$. 

\item  $G_6 = \langle a_1, a_2, a_3, a_4 \mid [a_2, a_1] = a_4, a_1^p = a_3, a_3^p = a_4\rangle$. In particular $\langle a_4\rangle=\gamma_1(G_6)\le Z(G_7)$ and $G_6/\gamma_1(G_6)\cong \Z_{p^2}\rtimes \Z_p$. The automorphisms of $G_6$ are of the form:
\begin{align}\label{G_6_aut}
  \varphi(u_1, u_2, u_3, u_4, v_4) =\begin{cases}  a_1 \mapsto a_1^{u_1} a_2^{u_2} a_3^{u_3} a_4^{u_4}, \\
    a_2 \mapsto a_2 a_4^{v_4}, \\
    a_3 \mapsto a_3^{u_1} a_4^{u_3}, \\
    a_4 \mapsto a_4^{u_1},
    \end{cases}
\end{align}
where $u_1, u_2, u_3, u_4, v_4 \in \{0, \dots, p-1\}$ and $v_2 \neq 0$.

\item $G_7 = \langle a_1, a_2, a_3, a_4 \mid [a_2, a_1] = a_3, [a_3, a_1] = a_4 \rangle$. In particular $\langle a_4\rangle= Z(G_7)\leq \gamma_1(G_7)=\langle a_3,a_4\rangle$ and $G_7/\langle a_4\rangle \cong \Z_p^2\rtimes \Z_p$. The automorphisms of $G_7$ are of the form:

\begin{align}\label{G_7_aut}
    \varphi(u_1, u_2, u_3, u_4, v_2, v_3, v_4) :
\begin{cases}
a_1 \mapsto a^{u_1}_1 a^{u_2}_2 a^{u_3}_3 a^{u_4}_4 \\
a_2 \mapsto a^{v_2}_2 a^{v_3}_3 a^{v_4}_4 \\
a_3 \mapsto a^{u_1v_2}_3 a_4^{u_1v_3 + u_1 v_3\frac{(u_1-1)}{2}}  \\
a_4 \mapsto a^{u_1^2v_2}_4
\end{cases}
\end{align}

where $u_1, u_2, u_3, u_4, v_2,v_3,v_4 \in \{0, \dots, p-1\}$ and $u_1v_2 \neq 0$.

\item $G_8 = \langle a_1, a_2, a_3, a_4 \mid [a_2, a_1] = a_3, [a_3, a_1] = a_4, a_1^p = a_4 \rangle$. In particular, $\langle a_4\rangle= Z(G_8)\leq \gamma_1(G_8)=\langle a_3,a_4\rangle$ and $G_8/\langle a_4\rangle \cong \Z_p^2\rtimes \Z_p$.
The automorphisms of $G_8$ are of the form:
\begin{align}\label{G_8_aut}
     \varphi(u_1, u_2, u_3, u_4, v_2, v_3, v_4) =\begin{cases}
           a_1 \mapsto a_1^{u_1} a_2^{u_2} a_3^{u_3} a_4^{u_4}, \\
     a_2 \mapsto a_2^{-u_1} a_3^{v_3} a_4^{v_4}, \\
     a_3 \mapsto a_3 a_4^{u_1 v_3 +\frac{u_1-1}{2}}, \\
     a_4 \mapsto a_4^{u_1},
         \end{cases} 
\end{align}
where \( u_1, u_2, u_3, u_4, v_3, v_4 \in \{0, \dots, p - 1\} \) and \( u_1 \neq 0 \).

\item $G_9 = \langle a_1, a_2, a_3, a_4 \mid [a_2, a_1] = a_3, [a_3, a_1] = a_4, a_2^p = a_4 \rangle$. In particular, $\langle a_4\rangle=Z(G_9)\leq \gamma_1(G_9)=\langle a_3,a_4\rangle$ and $G_9/\langle a_4\rangle \cong \Z_p^2\rtimes \Z_p$.
The automorphisms of $G_9$ are of the form:
\begin{align}\label{G_9_aut}
\varphi(u_3, u_4, v_2, v_3, v_4) = &
\begin{cases}
    a_1 \mapsto a_1^{\pm1} a_3^{u_3} a_4^{u_4}, \\
    a_2 \mapsto a_2^{v_2} a_3^{v_3} a_4^{v_4}, \\
    a_3 \mapsto a_3^{\pm v_2} a_4^{\pm v_3}, \\
    a_4 \mapsto a_4^{ v_2},
\end{cases}
\end{align}
where $u_3, u_4, v_2, v_3, v_4 \in \{0, \dots, p-1\}$ and $u_1 \neq 0$.
\item $G_{10} = \langle a_1, a_2, a_3, a_4 \mid [a_2, a_1] = a_3, [a_3, a_1] = a_4, a_2^p = a_4^w \rangle$. In particular, $\langle a_4\rangle=Z(G_{10})\leq \gamma_1(G_{10})=\langle a_3,a_4\rangle$, $G_{10}/\langle a_4\rangle \cong \Z_p^2\rtimes \Z_p$ and $G_{10}/\gamma_1(G_{10})\cong \Z_p^2$.
The automorphisms of $G_{10}$ are of the form:
\begin{align}\label{G_10_aut}
    \varphi(u_3, u_4, v_2, v_3, v_4)=\begin{cases}
        a_1 \mapsto a_1^{\pm 1} a_3^{u_3} a_4^{u_4}, \\
     a_2 \mapsto a_2^{v_2} a_3^{v_3} a_4^{v_4}, \\
     a_3 \mapsto a_3^{\pm v_2} a_4^{\pm v_3}, \\
         a_4 \mapsto a_4^{ v_2},
        \end{cases}
\end{align}
where $u_3, u_4, v_2, v_3, v_4 \in \{0, \dots, p - 1\}$ and $u_1 \neq 0$.

\item $G_{12} = \langle a_1, a_2, a_3, a_4 \mid [a_2, a_1] = a_4 \rangle$. In particular, $\langle a_4\rangle=\gamma_1(G_{12})\leq Z(G_{12})$ and $G_{12}/\gamma_1(G_{12}) \cong \Z_p^3$. 
The automorphisms of $G_{12}$ are of the form:
\begin{align}\label{G_12_aut}
   \varphi(u_1,u_2,u_3, u_4, v_1,v_2, v_3, v_4,w_3,w_4)=\begin{cases} a_1 \mapsto a_1^{u_1} a_2^{u_2} a_3^{u_3} a_4^{u_4}, \\
    a_2 \mapsto a_1^{v_1} a_2^{v_2} a_3^{v_3} a_4^{v_4}, \\
    a_3 \mapsto a_3^{w_3} a_4^{w_4}, \\
    a_4 \mapsto a_4^{v_2 u_1 - v_1 u_2},
    \end{cases}
\end{align}
where $u_1, u_2, u_3, u_4, v_1, v_2, v_3, v_4, w_3, w_4 \in \{0, \dots, p-1\}$, with $u_1 v_2 - v_1 u_2 \neq 0$ and $w_3 \neq 0$.
\item $G_{13} = \langle a_1, a_2, a_3, a_4 \mid [a_2, a_1] = a_4, a_1^p = a_4 \rangle$. In particular, $\langle a_4\rangle=\gamma_1(G_{13})\leq Z(G_{13})$ and $G_{13}/\gamma_1(G_{13}) \cong \Z_p^3$. The automorphisms of $G_{13}$ are of the form:
\begin{align}\label{G_13_aut}
\varphi(u_1, u_2, u_3, u_4, v_2, v_3, v_4, w_3, w_4)=\begin{cases}     
      a_1 \mapsto a_1^{u_1} a_2^{u_2} a_3^{u_3} a_4^{u_4}, \\
      a_2 \mapsto a_2^{v_2} a_3^{v_3} a_4^{v_4}, \\
      a_3 \mapsto a_3^{w_3} a_4^{w_4}, \\
      a_4 \mapsto a_4^{u_1 v_2},
    \end{cases}
\end{align}
where $u_1, u_2, u_3, u_4, v_2, v_3, v_4, w_3, w_4 \in \{0, \dots, p-1\}$ and $u_1 \neq 0$, $v_2 \neq 0$.
\item $G_{14} = \langle a_1, a_2, a_3, a_4 \mid [a_2, a_1] = a_4, a_3^p = a_4 \rangle$. In particular, $\langle a_4\rangle=\gamma_1(G_{14})\leq Z(G_{14})$ and $G_{14}/\gamma_1(G_{14}) \cong \Z_p^3$. 
The automorphisms of $G_{14}$ are of the form:
\begin{align}\label{G_14_aut}
   \varphi(u_1, u_2, u_4, v_1,v_2, v_4, w_4)=\begin{cases}
      a_1 \mapsto a_1^{u_1} a_2^{u_2} a_4^{u_4}, \\
      a_2 \mapsto a_1^{v_1} a_2^{v_2} a_4^{v_4}, \\
      a_3 \mapsto a_3^{u_1 v_2 - v_1 u_2} a_4^{w_4}, \\
      a_4 \mapsto a_4^{u_1 v_2 - v_1 u_2},
    \end{cases}
\end{align}
where $u_1, u_2, u_4, v_2, v_4,w_4 \in \{0, \dots, p - 1\}$ and $u_1 v_2 - v_1 u_2 \neq 0$. 
\end{itemize}


Let $G$ be a finite $p$-group and $f\in \aut{G}$. We focus on the pairs $(G,f)$ such that 
\begin{equation}\label{good}
Fix(f_{\Phi(G))})=1,\quad Fix(f)\leq Z(G)\cap \gamma_1(G),\quad |Fix(f)|=p.    
\end{equation}

\begin{lemma}\label{which groups}
    Let $G_i$ be a non-abelian group of size $p^4$ and $f\in \aut{G_i}$. If $(G_i,f)$ satisfies \eqref{good} then $i\in \{3,7,12,13\}$.
\end{lemma}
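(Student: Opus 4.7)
The plan is to examine each of the ten non-abelian groups $G_i$ of size $p^4$ (namely $i\in\{3,4,6,7,8,9,10,12,13,14\}$) and rule out those with $i\in\{4,6,8,9,10,14\}$. In every case the subgroup $Z(G_i)\cap\gamma_1(G_i)$ turns out to be cyclic of order $p$ (generated by $a_3$ for $i\in\{3,4\}$ and by $a_4$ otherwise), so the hypothesis $Fix(f)\leq Z(G_i)\cap\gamma_1(G_i)$ together with $|Fix(f)|=p$ forces equality. For each $i$ I compute $\Phi(G_i)=\gamma_1(G_i)\cdot G_i^p$ from the power-commutator presentation and read off the matrix representing the induced automorphism $f_{\Phi(G_i)}$ on $G_i/\Phi(G_i)$ from the parametric descriptions \eqref{G_3_aut}--\eqref{G_14_aut}.

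The six groups to be excluded fall into two patterns. In the first pattern, which covers $G_4$ and $G_6$, the parametric family of automorphisms is already too constrained: every $f\in\aut{G_4}$ satisfies $f(a_1)\equiv a_1\pmod{\Phi(G_4)}$ and every $f\in\aut{G_6}$ satisfies $f(a_2)\equiv a_2\pmod{\Phi(G_6)}$, because the corresponding diagonal parameter is forced to be $1$ in \eqref{G_4_aut} and \eqref{G_6_aut}. Hence $Fix(f_{\Phi(G_i)})\neq 1$ in both cases. In the second pattern, which covers $G_8$, $G_9$, $G_{10}$ and $G_{14}$, one gets a parameter clash: the exponent $t$ that appears in $f(a_4)=a_4^t$ coincides with a diagonal entry or scalar block of $f_{\Phi(G_i)}$ -- namely $u_1$ for $G_8$, $v_2$ for $G_9$ and $G_{10}$, and $u_1v_2-v_1u_2$ for $G_{14}$ (this last scalar also being the eigenvalue of $f_{\Phi(G_{14})}$ on $a_3\Phi$). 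Requiring $t=1$ in order to fix $a_4$ therefore produces a non-trivial fixed vector in $G_i/\Phi(G_i)$, contradicting the first condition in \eqref{good}.

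The main obstacle is organisational rather than mathematical: there are ten cases, each requiring a short matrix computation on $G_i/\Phi(G_i)$, but the uniformity of the two obstructions above makes the analysis routine once $\Phi(G_i)$ and $f_{\Phi(G_i)}$ have been written down. For the four retained groups $i\in\{3,7,12,13\}$ one can moreover verify that compatible parameters exist (e.g.\ in $G_7$ one picks $u_1,v_2\neq 1$ with $u_1^2v_2=1\pmod p$, which is possible for $p>3$), although this is not strictly required by the statement.
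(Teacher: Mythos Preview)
Your proof is correct and follows essentially the same case-by-case strategy as the paper, organising the six excluded groups according to whether the eigenvalue-$1$ obstruction on $G_i/\Phi(G_i)$ is visible directly from the automorphism description or only after imposing $Fix(f)=\langle a_4\rangle$. One small wording issue: for $G_4$ you write $f(a_1)\equiv a_1\pmod{\Phi(G_4)}$, but in fact $f(a_1)\equiv a_1 a_2^{u_2}\pmod{\Phi(G_4)}$; what you mean (and what suffices) is that the diagonal entry corresponding to $a_1$ is forced to be $1$, so $f_{\Phi(G_4)}$ is triangular with $1$ on the diagonal and hence has a nontrivial fixed vector.
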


\begin{proof}
We employ a case by case discussion:

    \begin{enumerate}
        \item[(i)] $i=4,6,8$: according to \eqref{G_4_aut}, \eqref{G_6_aut} and \eqref{G_8_aut} $f_{\Phi(G_i)}$ is a matrix with an eigenvalue equals to $1$. Hence $Fix(f_{\Phi(G_i)})\neq 1$. 
        

        
        \item[(iv)] $i=9,10$: note that $\gamma_1(G_i)\cap Z(G_i)=\langle a_4\rangle$, so according to \eqref{G_9_aut} and \eqref{G_10_aut} $v_2=1$. Therefore, $f_{\Phi(G_i)}$ has an eigenvalue equal to $1$. So $Fix(f_{\Phi(G_i)})\neq 1$.

                \item[(v)] $i=14$ note that $\gamma_1(G_{14})\cap Z(G_i)=\langle a_4\rangle$, so according to \eqref{G_14_aut} $u_1 v_2-v_1 u_2=1$. Therefore, $f_{\Phi(G_{14})}$ has an eigenvalue equal to $1$. So $Fix(f_{\Phi(G_{14})})\neq 1$.\qedhere
    \end{enumerate}
\end{proof}

The following statement can be easily proved by looking at the description of the automorphisms of the groups $G_i$ for $i=3,7,12,13$ provided at the beginning of the section.
\begin{lemma}\label{what auto?}
    Let $i=3,7,12,13$ and $f\in \aut{G_i}$. Then: \begin{itemize}
        \item[(i)] $(G_3,f)$ satisfies \eqref{good} if and only if $u_1 v_2=1 \pmod{p}$, $ u_1,v_2\neq 1 \pmod{p}$. 
    
        \item[(ii)] $(G_7,f)$ satisfies \eqref{good} if and only if  $u_1^{2} v_2=1\pmod{p}$, $u_1,v_2\neq 1\pmod{p}$.
    
        \item[(iii)] $(G_{12},f)$ satisfies \eqref{good} if and only if $u_1 v_2-v_1 u_2=1\pmod{p}$, $u_1+v_2\neq 2$. 
        
        \item[(iv)] $(G_{13},f)$ satisfies \eqref{good} if and only if $ u_1 v_2 =1\pmod{p}$, $u_1,v_2\neq 1\pmod{p}$.
    \end{itemize}
\end{lemma}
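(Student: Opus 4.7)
The plan is a case-by-case verification for $i\in\{3,7,12,13\}$, reading everything off the explicit descriptions of $\aut{G_i}$ given in \eqref{G_3_aut}, \eqref{G_7_aut}, \eqref{G_12_aut}, \eqref{G_13_aut}. First, for each $G_i$ I would identify $\Phi(G_i)$ and $Z(G_i)\cap\gamma_1(G_i)$ directly from the power-commutator presentation. For $G_3$ and $G_7$ one has $\Phi(G_i)=\langle a_3,a_4\rangle$, so $G_i/\Phi(G_i)\cong\Z_p^2$; for $G_{12}$ and $G_{13}$ one has $\Phi(G_i)=\langle a_4\rangle$, so $G_i/\Phi(G_i)\cong\Z_p^3$. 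The subgroup $Z(G_i)\cap\gamma_1(G_i)$ is cyclic of order $p$ in every case, generated by $a_3$ for $G_3$ and by $a_4$ for $G_7, G_{12}, G_{13}$.

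Next, I would translate the conjunction in \eqref{good} into arithmetic conditions on the parameters. The condition $Fix(f_{\Phi(G_i)})=1$ is equivalent to $\det(M-I)\neq 0$, where $M$ denotes the matrix of $f_{\Phi(G_i)}$ in the natural basis of the Frattini quotient. Reading \eqref{G_i_aut} shows that $M$ is lower-triangular for $G_3, G_7, G_{13}$ with diagonal $(u_1,v_2)$ (respectively $(u_1,v_2,w_3)$), while for $G_{12}$ the upper $2\times 2$ block is $\bigl(\begin{smallmatrix}u_1 & v_1\\ u_2 & v_2\end{smallmatrix}\bigr)$. Since $Fix(f_{\Phi(G_i)})=1$ already forces $Fix(f)\leq\Phi(G_i)$, and since the distinguished generator of $Z(G_i)\cap\gamma_1(G_i)$ lies in $\Phi(G_i)$, the remaining two requirements $Fix(f)\leq Z(G_i)\cap\gamma_1(G_i)$ and $|Fix(f)|=p$ become equivalent to demanding that exactly this generator be fixed by $f$. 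Inspecting the rows for $a_3$ (for $G_3$) and $a_4$ (for the others) in \eqref{G_i_aut} gives the fixation equations
\[ u_1v_2=1,\quad u_1^2v_2=1,\quad u_1v_2-v_1u_2=1,\quad u_1v_2=1 \]
for $i=3,7,12,13$ respectively.

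Finally, I would combine these with the triangular/block conditions from $\det(M-I)\neq 0$. For $i=3,7,13$ this reduces to $u_1\neq 1$ and $v_2\neq 1$, giving (i), (ii), and (iv) directly. For $i=12$ the relevant $2\times 2$ minor of $M-I$ is $(u_1-1)(v_2-1)-u_2v_1$; substituting the fixation identity $u_1v_2-v_1u_2=1$ collapses this to $2-u_1-v_2$, so nonvanishing of the determinant becomes $u_1+v_2\neq 2$, which is (iii).

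I expect no serious obstacle: the argument is essentially matrix bookkeeping against the four explicit formulas. The only mild subtlety is verifying, in the non-abelian cases, that once $Fix(f_{\Phi(G_i)})$ is trivial and the distinguished central-commutator generator is fixed, no further fixed elements appear inside $\Phi(G_i)$; this is immediate from a direct calculation on the abelian subgroup $\Phi(G_i)$ using the action of $f$ on $a_3$ and $a_4$ read from \eqref{G_i_aut}.
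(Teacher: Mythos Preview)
Your approach is exactly what the paper has in mind: its entire proof is the sentence ``can be easily proved by looking at the description of the automorphisms of the groups $G_i$'', so the case-by-case reading of \eqref{G_3_aut}, \eqref{G_7_aut}, \eqref{G_12_aut}, \eqref{G_13_aut} is the whole argument, and your execution of it is correct.

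One bookkeeping slip worth flagging. For $i=12,13$ the Frattini quotient is three-dimensional and the induced map $f_{\Phi(G_i)}$ also acts on $\bar a_3$ by the scalar $w_3$; hence $Fix(f_{\Phi(G_i)})=1$ requires $w_3\neq 1$ in addition to the conditions you wrote. You correctly recorded the enlarged diagonal $(u_1,v_2,w_3)$ for $G_{13}$ but then dropped $w_3$ in your concluding line, and for $G_{12}$ you reduced to ``the relevant $2\times 2$ minor'' without mentioning the $(w_3-1)$ factor. The stated lemma in the paper omits $w_3\neq 1$ as well, so your write-up matches the claim as printed; but a careful run of your own plan produces this extra condition, and you should be aware of the discrepancy.
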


\bibliographystyle{amsalpha}
\bibliography{references}

\end{document}